\documentclass[a4paper,12pt,reqno]{amsart}
\usepackage{amsmath,amsfonts,amssymb,amsthm,enumerate,multicol}
\usepackage[pagewise]{lineno}
\usepackage{tikz,graphicx}
\usepackage{hyperref}
\usepackage{caption,enumitem,wasysym}
\usepackage{cleveref}
\usepackage{epstopdf}
\usepackage{float,wrapfig}
\usepackage[labelsep=space]{caption}
\usepackage[font=footnotesize]{caption}
\usepackage{xcolor}
\usepackage{cite}
\usepackage[autostyle]{csquotes}

\setlength{\textwidth}{16cm}
\setlength{\textheight}{24cm}
\setlength{\oddsidemargin}{0cm}
\setlength{\topmargin}{-0.5cm}
\setlength{\evensidemargin}{0cm} \setlength{\footskip}{40pt}

\newtheorem{theorem}{Theorem}[section]
\newtheorem{corollary}[theorem]{Corollary}
\newtheorem{lemma}[theorem]{Lemma}

\newtheorem{definition}[theorem]{Definition}
\newtheorem{remark}[theorem]{Remark}

\numberwithin{equation}{section}

\allowdisplaybreaks

\theoremstyle{plain}

\usepackage{mathtools}

\usepackage{amsmath,amsfonts,amssymb,amsthm,enumerate,multicol}
\usepackage{tikz}
\usepackage{float}
\textwidth 16cm
\textheight 22cm
\topmargin -1.0cm
\oddsidemargin 0cm
\evensidemargin 0cm

\allowdisplaybreaks


\thispagestyle{empty}
\numberwithin{equation}{section}
\begin{document}
	\begin{center}
		\small{\textbf{Existence and Non-existence for Exchange-Driven Growth Model}}
	\end{center}

	

	

	\medskip
	\medskip
	\centerline{${\text{Saroj ~ Si$^{\dagger}$}}$ and ${\text{Ankik ~ Kumar ~Giri$^{\dagger,\ast}$}}$}\let\thefootnote\relax\footnotetext{$^{\ast}$Corresponding author. Tel +91-1332-284818 (O);  Fax: +91-1332-273560  \newline{\it{${}$ \hspace{.3cm} Email address: }}ankik.giri@ma.iitr.ac.in}
	\medskip
	{\footnotesize

		\centerline{ ${}^{\dagger}$ Department of Mathematics, Indian Institute of Technology Roorkee,}
		\centerline{Roorkee-247667, Uttarakhand, India}
	}

	\bigskip

	\begin{quote}
		{\small {\em \bf Abstract.}  The exchange-driven growth (EDG) model describes the evolution of clusters through the exchange of single monomers between pairs of interacting clusters. The dynamics of this process are primarily influenced by the interaction kernel $K_{j,k}$. In this paper, the global existence of classical solutions to the EDG equations is established for non-negative, symmetric interaction kernels satisfying $K_{j,k} \leq C(j^{\mu}k^{\nu} + j^{\nu}k^{\mu}) $, where $\mu, \nu \leq 2$, $\mu + \nu \leq 3$, and $C>0$, with a broader class of initial data. This result extends the previous existence results obtained by Esenturk \cite{Esenturk}, Schlichting \cite{Schlichting}, and Eichenberg \& Schlichting \cite{Eichenberg}.
		Furthermore, the local existence of classical solutions to the EDG equations is demonstrated for symmetric interaction kernels that satisfy $K_{j,k} \leq C j^{2} k^{2}$ with $C > 0$, considering a broader class of initial data. In the intermediate regime $3 < \mu + \nu \leq 4$, the occurrence of finite-time gelation is established for symmetric interaction kernels satisfying $C_{1}\left(j^{2}k^{\alpha}+j^{\alpha}k^{2}\right)\leq K_{j,k}\leq Cj^{2}k^{2}$, where $1 < \alpha \leq 2$, $C>0$, and $C_{1} > 0$, as conjectured in \cite{Esenturk}. In this case, the non-existence of the global solutions is ensured by the occurrence of finite-time gelation. Finally, the occurrence of instantaneous gelation of the solutions to EDG equations for symmetric  interaction kernels satisfying $K_{j,k}\geq C\left(j^{\beta}+k^{\beta}\right)$ ($\beta>2, C>0)$ is shown, which also implies the non-existence of solutions in this case.}
	\end{quote}

	\vspace{.3cm}
	
	\noindent
	{\rm \bf Mathematics Subject Classification (2020).} 34A35, 34A12, 46B50, 34G20.\\
	
\noindent {\bf Keywords:} Exchange-driven growth; Interaction kernels; Existence; Mild solutions; Non-existence; Gelation; Instantaneous gelation.

	\section{Introduction}

	\noindent Exchange-driven growth (EDG) processes arise in the dynamics of cluster growth through the exchange of single units called monomers. It is usually assumed that clusters are completely identified by their mass or volume. The EDG model has applications in migration \cite{Ke1}, wealth exchange \cite{Isp}, population dynamics \cite{Leyvraz} and mean-field limit of a class of interacting particle systems \cite{Stef2, Lam}; see also \cite{Esenturk, Schlichting, barik2024discrete, Esenturk-L} and the references therein. If $f_{j}(t)$ denotes the density of clusters of size $j$ at time $t$, then the basic reaction for the exchange-driven growth phenomenon can be interpreted as
\begin{align*}
		f_{j} (t)+ f_{k}(t)\;\;\xrightarrow{\scriptsize{K_{j,k}}}{}\;\;  f_{j-1}(t)+ f_{k+1}(t),
	\end{align*}	
for $t\ge 0$ and $j,k\in \mathbb{N}_{0}:= \mathbb{N}\cup\left\{0\right\}$ with $\mathbb{N}:=\left\{1, 2, 3, \ldots\right\}$. Here, the interaction kernel $K_{j,k}$ depends only on the size of the clusters involved in the reaction. More precisely, in the phenomenon described above, the clusters of size $j$ export a monomer to the clusters of size $k$. Mathematically, the exchange-driven growth phenomenon is represented by an infinite system of non-linear ordinary differential equations
	\begin{align}
		\text{ }\dot{f}_{j}(t)  &  = f_{j+1}(t)\sum_{k=0}^{\infty}K_{j+1,k} f_{k}(t)- f_{j}(t) \sum_{k=0}^{\infty}K_{j,k} f_{k}(t)\label{infode}\\
		& \quad - f_{j}(t)\sum_{k=1}^{\infty}K_{k,j} f_{k}(t)+ f_{j-1}(t)\sum_{k=1}^{\infty
		}K_{k,j-1} f_{k}(t),\quad \text{for}\quad j\in \mathbb{N},\nonumber
		\end{align}
	and
	\begin{align}
		\quad \dot{ f}_{0}(t)&= f_{1}(t)\sum_{k=0}^{\infty}K_{1,k} f_{k}(t)- f_{0}(t)\sum_{k=1}^{\infty
		}K_{k,0} f_{k}(t)\text{,} \label{0-infode}
	\end{align}
with initial data
\begin{align}
	 f_{j}(0)&= f_{j,0}\quad \text{for}\quad j\in \mathbb{N}_{0}.\label{infIC}
\end{align}
The first and fourth terms in \eqref{infode} represent the formation of clusters of size $j$ when either clusters of size $j+1$ release a monomer to other clusters, or clusters of size $j-1$ acquire a monomer from other clusters. The second and third terms in \eqref{infode} represent the loss of clusters of size $j$ when the clusters of size $j$ either export a monomer to, or import a monomer from other clusters. In the same vein, the first term in \eqref{0-infode} represents the birth of clusters of size $0$, while the second term represents their death. There is no loss of mass or number of particles in the exchange-driven growth phenomenon. Therefore, the total mass and the total number of particles of the EDG system \eqref{infode}--\eqref{infIC} are expected to be conserved, i.e.,
\begin{align*}
	\sum_{j=1}^{\infty} jf_{j}(t)=\sum_{j=1}^{\infty} jf_{j}(0) \quad \text{ and}\quad \sum_{j=0}^{\infty} f_{j}(t)=\sum_{j=0}^{\infty} f_{j}(0),
\end{align*}
 respectively, for $t\ge 0$. However, Ben-Naim \&  Krapivsky \cite{Naim} observe that when the growth of the interaction kernel $K_{j,k}$ is high, the conservation of total mass may fail due to the formation of an infinite-size mass, or gel. This phenomenon is known as \emph{gelation} \cite{Ball, Ziff, Esco, Menon}. Mathematically, the \emph{gelation time} $T_{gel}$ is defined by
\begin{align*}
	T_{gel}:= \inf \left\{ t\ge 0 : \sum_{j=1}^{\infty} jf_{j}(t) <\sum_{j=1}^{\infty} jf_{j}(0)\right\}.
\end{align*}
	It is well-known that if $\sum_{j=1}^{\infty} j^{m}f_{j}(t_{0})=\infty$ for some $m\in \mathbb{N}$ and $t_{0} \in (0,\infty)$, then $T_{gel}\leq t_{0}$, see \cite[Lemma 9.2.2]{BLL_book}. In other words, $\sum_{j=1}^{\infty} j^{m}f_{j}(t)<\infty$ for all $m\in \mathbb{N}$ whenever $t<T_{gel}$. In the context of exchange-driven growth, a preliminary investigation in \cite{Naim} suggests that symmetric interaction kernels of the form  $K_{j,k}=(jk)^{\eta}$ do not lead to gelation if $\eta \leq 3/2$. However, when $ \frac{3}{2} < \eta \leq 2$, gelation occurs at a finite time while gelation occurs immediately at $t = 0$ for $\eta > 2$, which is known as \emph{instantaneous gelation}. This occurrence of gelation and instantaneous gelation in exchange-driven growth model differ from the Smoluchowski model. In Smoluchowski's coagulation model \cite{ernst1984}, gelation and instantaneous gelation occur for $1/2 < \eta \leq 1$ and $\eta >1$, respectively, see \cite{Dongen, Carr}. 
	
	The mathematical study of the infinite EDG system \eqref{infode}--\eqref{infIC} was initiated by Esenturk \cite{Esenturk} with the well-posedness result for the interaction kernels growing at most linearly; that is, 
	\begin{align}\label{kerlinear}
	0\leq K_{j,k} \leq C jk\quad \text{for all}\; j, k \in \mathbb{N}_{0}.
	\end{align}
In this case, the global existence of solution is shown by taking an additional assumption on the initial data, i.e., $\sum_{j=1}^{\infty} j^{p}f_{j}(0)<\infty$, for some $p>1$ and the uniqueness is shown with $\sum_{j=1}^{\infty} j^{2}f_{j}(0)<\infty$. In addition, for symmetric interaction kernels, the global well-posedness has been extended in \cite{Esenturk} to the interaction kernels satisfying 
	\begin{align}\label{quadraticcoagassum}
	0\leq K_{j,k}=K_{k,j} \leq C_{q} \left( j^\mu k^\nu + j^\nu k^\mu\right)\; \text{ for}\; j, k \in \mathbb{N}_{0}\;\text{ with}\; \mu,\nu \in [0,2]\; \text{and} \; \mu+\nu\leq 3.
	\end{align}
In this case, the global existence of solution is shown by taking an additional assumption on the initial data, i.e., $\sum_{j=1}^{\infty} j^{p}f_{j}(0)<\infty$, for some $p>2$ and the uniqueness is shown with $\sum_{j=1}^{\infty} j^{4}f_{j}(0)<\infty$. Moreover, a non-existence result has been established in \cite{Esenturk} for the faster growing symmetric interaction kernels, i.e.,
\begin{align}\label{superkernel} 
0\leq K_{j,k}=K_{k,j}\ge Cj^{\beta} \quad \text{with}\quad K_{j,0}=0,\quad j\in \mathbb{N}_{0},
\end{align}
where $C>0$ and $\beta >2$ with some additional assumption on the initial data, i.e., $\lim_{n\rightarrow\infty}e^{\delta n^{\beta -2}}\sum_{j=n}^{\infty}(j^2-n^2) f_{j}(0)\nrightarrow 0$. Later, Schlichting \cite{Schlichting} extended the global existence result for the class of interaction kernels satisfying \eqref{kerlinear} and
\begin{align*}
K_{j,k-1}\leq C_{K} jk\; \quad \text{for}\; j,k \in \mathbb{N},
\end{align*}
 by relaxing the additional assumption taken on the initial data, specifically, the requirement that $\sum_{j=1}^{\infty} j^{p}f_{j}(0)<\infty$, for some $p>1$. Eichenberg \& Schlichting \cite{Eichenberg} established the global and local existence of solutions for a particular class of homogeneous, product-type separable interaction kernels of the following form
\begin{align}\label{homogeneouskernel}
	K_{j,k}= \begin{cases}
		(jk)^{\eta}, \quad \text{when}\; \eta >0, \\
		1-\delta_{j,0}, \quad \text{when}\; \eta =0,
	\end{cases}
\end{align}
for all $j, k \in \mathbb{N}_{0}$ and $\eta \in [0,2]$. The global existence of solutions for $0\leq \eta \leq 3/2$ and the local existence of solutions for $3/2< \eta \leq 2$ to \eqref{infode}--\eqref{infIC} have been demonstrated for a generalized class of initial data in the work of Eichenberg and Schlichting. However, the non-homogeneous interaction kernels and the homogeneous interaction kernels of the form $K_{j,k}=C\left( j^\mu k^\nu + j^\nu k^\mu\right)$ with $\mu \neq \nu$ that satisfy \eqref{quadraticcoagassum} do not fall in the class of interaction kernels that satisfy \eqref{homogeneouskernel}. To the best of our knowledge, the existence result in \cite{Esenturk} for this general class of interaction kernels has not yet been improved for a larger class of initial data.

The main aim of the present work is to extend the existence results established in \cite{Esenturk, Eichenberg, Schlichting} and examine the occurrence of gelation and the phenomenon of instantaneous gelation. Specifically, the existence result provided in \cite{Esenturk} for the class of symmetric interaction kernels \eqref{quadraticcoagassum} is generalized by relaxing the more restrictive condition imposed on the initial data, specifically, the requirement that $ M_{p}(f(0)) < \infty$ for some  $p > 2$. To prove the global and local existence of solutions, the same approach as in \cite{Esenturk,McLeod,McLeod2} is followed, namely, the truncation method, and some moment estimates are derived. Then, to find the bounds on higher moments, the de la Vallée-Poussin theorem is applied, and some convex function inequalities derived in \cite{laurenccot2014weak} are utilized. This helps to remove the additional assumption taken on the initial data in \cite[Theorem 2]{Esenturk}, thereby extending the global and local existence results of \cite{Eichenberg, Schlichting}. Moreover, an intermediate regime ($\mu, \nu \leq 2$, $\mu + \nu \leq 4$) for $K_{j, k}$ is identified, where the solutions exist locally for a more general class of initial data. The conjecture made in \cite{Esenturk} is proved to some extent, indicating the gelation regime in which blow-up of the certain moment of the solution occurs after a finite time (the gelation time). The occurrence of finite time gelation is shown with the help of the propagation of moments result \Cref{momentpropagation} and Gronwall's inequality. The non-existence of global solutions to \eqref{infode}--\eqref{infIC} is concluded based on the occurrence of finite time gelation for the symmetric interaction kernel satisfying $C_{1}\left(j^{2}k^{\alpha}+j^{\alpha}k^{2}\right) \leq K_{j,k} \leq Cj^{2}k^{2}$ , where $1 < \alpha \leq 2$ and  $C_{1} > 0$. It is also demonstrated that, for the class of interaction kernels $K_{j,k} = C j^{2} k^{2}$ with $M_{2}(f(0)) > 0$ and $C > 0 $, the gelation time is given by $T_{\text{gel}}=\left(2M_{2}(f(0))C\right)^{-1}$. Beyond this regime, i.e., if  $ K_{j,k} \geq Cj^{\beta}$ ( $\beta > 2$ and $C>0$), the occurrence of instantaneous gelation is shown, i.e., $T_{\text{gel}} = 0$. The occurrence of the instantaneous gelation is based on the idea of obtaining lower bounds for the higher moments of the solution. The occurrence of instantaneous gelation plays a key role in proving the non-existence result established in \cite{Esenturk} for the class of symmetric interaction kernels satisfying \eqref{superkernel}.
	
The structure of this article is as follows: In Section 2, we provide the notion of the solution to \eqref{infode}--\eqref{infIC} and state the main results of this article. Section 3 is devoted to proving the global existence of a solution for the symmetric interaction kernels, as mentioned in \Cref{mainthm-1}, and to establishing the local existence of a solution, as described in \Cref{localsolution}. The occurrence of the gelation phenomenon, which prevents the existence of global mass-conserving solutions, is addressed in Section 4. In Section 5, the occurrence of instantaneous gelation is established, which guarantees the non-existence of solutions to \eqref{infode}--\eqref{infIC} for any time.
\section{Notion of Solution and Main Results}
	\noindent We first introduce some notations. Define $Y_{r}^{+}$ as the positive cone of the Banach space  $\left(Y_{r}, \| \cdot \|_{r}\right)$, where
	\begin{align*}
	Y_{r} := \left\{ f = (f_{j})_{j \ge 0} : \|f\|_{r} = \sum_{j=0}^{\infty} j^{r} |f_{j}| < \infty \right\},
    \end{align*}
	for  $r \geq 0$. In other words, $ Y_{r}^{+} := \left\{ f \in Y_{r} : f_{j} \geq 0 \text{ for all } j \in \mathbb{N}_{0} \right\}$, where $\mathbb{N}_{0} := \{0, 1, 2, \dots\}$.
	\begin{definition}\label{notionofsolution}
		Let $T\in (0,\infty]$ and $f(0):=(f_{j}(0))_{j\ge 0}\in Y_{r}^{+}$. A mild solution $f=(f_{j})_{j\ge 0}$ to the EDG system \eqref{infode}--\eqref{infIC} on $[0,T)$, with the initial data $f(0)=(f_{j}(0))_{j\ge 0}$, is a function $f: [0,T)\mapsto Y_{r}^{+}$ such that
		\begin{enumerate}
			\item For all $j\in\mathbb{N}_{0}$, $ f_{j}:[0,T)\rightarrow\lbrack0,\infty)$ is continuous and
			$\sup_{t\in [0,T)} \|f(t)\|_{r}<\infty$,
			\item For all $j\in\mathbb{N}_{0}$, $t\in\lbrack0,T)$,  $\int_{0}^{t}\sum_{k=0}^{\infty}K_{j,k} f_{k}ds<\infty,$
			\item For all $j\in\mathbb{N}$, $t\in\lbrack0,T)$, the following hold 
			\begin{align*}
				f_{j}(t)&= f_{j}(0)+\int_{0}^{t}   f_{j+1}(s)\sum_{k=0}^{\infty
				}K_{j+1,k} f_{k}(s) ds- \int_{0}^{t} f_{j}(s)\sum_{k=0}^{\infty}K_{j,k} f_{k}(s)  ds\\
				&\quad -\int_{0}^{t} f_{j}(s)\sum_{k=1}^{\infty}K_{k,j} f_{k}(s) ds+ \int_{0}^{t} f_{j-1}(s)\sum_{k=1}^{\infty}K_{k,j-1} f_{k}(s) ds, \nonumber\\ 
		 f_{0}(t)&= f_{0}(0)+\int_{0}^{t} f_{1}(s)\sum_{k=0}^{\infty}
				K_{1,k} f_{k}(s) ds- \int_{0}^{t} f_{0}(s)\sum_{k=1}^{\infty}K_{k,0} f_{k}(s)ds.
			\end{align*}
		\end{enumerate}
	\end{definition}
	Let us now define the $p$-th moment of the solution $\left(f_j\right)_{j\ge 0}$ as
	\begin{align*}
	M_{p}(f):=\sum_{j=0}^{\infty}j^{p} f_{j}\quad \text{and}\quad M_{p}^{N}(f):=\sum_{j=0}^{N}j^{p} f_{j},
	\end{align*}
	where $p >0$, $N\in \mathbb{N}$ and the zeroth moment as $M_{0}(f):=\sum_{j=0}^{\infty} f_{j}$ and $M_{0}^{N}(f):=\sum_{j=0}^{N} f_{j}$.
	In particular, at time $t$, $M_{0}(f(t))$ and $M_{1}(f(t))$ represent the total number of particles and the total mass of the system, respectively.
	
	We are now ready to outline the primary outcomes of this paper.
	\begin{theorem}[Existence]\label{mainthm-1}
 Let the interaction kernel $K_{j,k}$ satisfy \eqref{quadraticcoagassum}, and let $f(0)\in Y_{1}^{+}$ with the finite initial mass $\rho:= M_{1}(f(0))<\infty$.
		\begin{enumerate}[label=(\alph*)]
			\item If  $f(0)=(f_{j}(0))_{j\ge 0} \in Y_{\lambda}^{+}$, then
			the system (\ref{infode})-(\ref{infIC}) has a global mild solution $f=(f_{j})_{j\ge 0}$ such that  $f(t)\in Y_{\lambda}^{+}$ for each $t\in [0, \infty)$ with
			\begin{align}\label{LambdaD}
				\lambda:= \max\{\mu,\nu\}>1.
			\end{align}
			\item If $\max\{\mu,\nu\} \leq 1$, then the system \eqref{infode}--\eqref{infIC} has a global mild solution $f=(f_{j})_{j\ge 0}$ such that  $f(t)\in
			Y_{1}^{+}$ for each $t\in [0, \infty)$.
		\end{enumerate}
	\end{theorem}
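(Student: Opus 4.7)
The plan is to follow the truncation approach of \cite{Esenturk, McLeod, McLeod2}. For each $N\in\mathbb{N}$, I introduce the truncated kernel $K_{j,k}^{N} := K_{j,k}\mathbf{1}_{\{j\le N,\,k\le N\}}$ and consider the associated finite-dimensional system for $f^{N} = (f_{j}^{N})_{0\le j\le N}$ in $\mathbb{R}^{N+1}$. The Picard--Lindel\"of theorem yields a unique local $C^{1}$ solution; a standard invariance argument on the boundary of the positive cone confirms non-negativity of the components, and testing against $\phi(j)=1$ and $\phi(j)=j$ shows that $M_{0}^{N}$ and $M_{1}^{N}$ are conserved along the truncated flow. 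In particular $\|f^{N}(t)\|_{1}=\rho$ for every $t\ge 0$, so the truncated solution extends globally in time.

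The decisive step is to propagate a super-linear moment uniformly in $N$. For part (a), I apply de la Vall\'ee-Poussin's theorem to the summable sequence $(j^{\lambda}f_{j}(0))_{j\ge 0}$ to obtain a non-decreasing convex function $\Phi\in C^{1}([0,\infty))$ with $\Phi(0)=0$, $\Phi'$ concave, and $\Phi(r)/r\to\infty$ as $r\to\infty$, for which $\sum_{j}j^{\lambda}\Phi(j)f_{j}(0)<\infty$. Setting $\varphi(j):=j^{\lambda}\Phi(j)$ and differentiating $\sum_{j}\varphi(j)f_{j}^{N}(t)$ along the truncated flow gives
\begin{align*}
\frac{d}{dt}\sum_{j\ge 0}\varphi(j)\,f_{j}^{N}(t)=\sum_{j\ge 1,\,k\ge 0}K_{j,k}^{N}f_{j}^{N}(t)f_{k}^{N}(t)\bigl[\varphi(j-1)-\varphi(j)+\varphi(k+1)-\varphi(k)\bigr].
\end{align*}
The convex-function inequalities of \cite{laurenccot2014weak}, together with $\mu+\nu\le 3$ and $\lambda=\max\{\mu,\nu\}$, allow one to bound the right-hand side by $C_{T}\bigl(1+\sum_{j}\varphi(j)f_{j}^{N}(t)\bigr)$ with $C_{T}$ independent of $N$. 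Gronwall's inequality then yields $\sum_{j}\varphi(j)f_{j}^{N}(t)\le C(T)$ on every compact interval $[0,T]$, uniformly in $N$.

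With this uniform $\varphi$-moment estimate the sequence $(f^{N})$ is precompact: the superlinearity of $\Phi$ provides equi-uniform-integrability of the tails, while the evolution equation together with the already-established bounds on the nonlinear terms yields equicontinuity in $t$ of each component $f_{j}^{N}$. A diagonal extraction followed by Arzel\`a--Ascoli produces a subsequence converging pointwise on $[0,\infty)\times\mathbb{N}_{0}$ to a non-negative $f$, and the uniform moment control upgrades this to convergence in $Y_{\lambda}^{+}$ on compact time intervals. The same uniform bounds dominate the infinite sums in the mild formulation of \Cref{notionofsolution}, so that dominated convergence lets one pass to the limit and identify $f$ as a mild solution with $f(t)\in Y_{\lambda}^{+}$ for every $t\ge 0$. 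For part (b), where $\max\{\mu,\nu\}\le 1$, the kernel exhibits at most bilinear growth $K_{j,k}\le 2C_{q}\,(1+j)(1+k)$, and it suffices to work in $Y_{1}^{+}$: applying de la Vall\'ee-Poussin to $(jf_{j}(0))_{j\ge 0}$ and repeating the scheme with the weight $\varphi(j)=j\Phi(j)$ produces the desired global mild solution. The principal obstacle throughout is the sharp regime $\mu+\nu=3$, where a differential inequality for a pure power moment $M_{p}$ would be quadratic in $M_{p}$ and hence blow up in finite time; it is precisely the refined weight $\Phi$ from de la Vall\'ee-Poussin, combined with the convex-function inequalities of \cite{laurenccot2014weak}, that converts the quadratic bound into a linear one and thereby removes the $p>2$ assumption on the initial data used in \cite{Esenturk}.
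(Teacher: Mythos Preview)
Your outline has the right architecture (truncation, moment propagation, Arzel\`a--Ascoli, passage to the limit), but it collapses what in the paper is a genuine two-stage moment argument into a single step, and that single step does not close.

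The paper first proves a uniform-in-$N$ bound on the \emph{pure} moment $M_{\lambda}^{N}(f^{N}(t))$ (Lemma~3.1). For the symmetric kernel one uses the second-difference identity \eqref{mom-red2}, so that the evolution of $\sum_{j}j^{\lambda}f_{j}^{N}$ is driven by $\sum_{j,k}(j^{\lambda-2}\text{-type})K_{j,k}f_{j}^{N}f_{k}^{N}$. It is precisely here that the constraint $\mu+\nu\le 3$ is exploited: after inserting $K_{j,k}\le C_{q}(j^{\mu}k^{\nu}+j^{\nu}k^{\mu})$ the resulting bilinear form is reduced, via H\"older and Young inequalities (see \eqref{tempmomentbound-6}--\eqref{tempmomentbound-8}), to a \emph{linear} expression in $\sum_{j}j^{\lambda}f_{j}^{N}$, whence Gronwall gives $M_{\lambda}^{N}(f^{N}(t))\le C_{M}e^{CT}$. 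Only \emph{after} this bound is available does the paper introduce the de~la~Vall\'ee--Poussin weight and show that $\sum_{j}j^{\lambda-1}G_{\lambda}(j)f_{j}^{N}$ is propagated: the right-hand side of the corresponding differential inequality is $\lesssim M_{\lambda}^{N}(t)\cdot\sum_{j}j^{\lambda-1}G_{\lambda}(j)f_{j}^{N}$, which becomes linear \emph{because} $M_{\lambda}^{N}(t)$ is already bounded.

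You skip the first stage entirely. With your weight $\varphi(j)=j^{\lambda}\Phi(j)$, the second difference is $\lesssim j^{\lambda-2}\Phi(j)$, and after multiplying by $K_{j,k}\le 2C_{q}j^{\lambda}k^{\lambda}$ and summing you obtain a term of size $\bigl(\sum_{j}\varphi(j)f_{j}^{N}\bigr)\cdot M_{\lambda}^{N}(t)$, not $C_{T}\bigl(1+\sum_{j}\varphi(j)f_{j}^{N}\bigr)$. Since $M_{\lambda}^{N}(t)$ is not yet controlled, the inequality is effectively quadratic and does not yield a uniform bound. Absorbing $k^{\lambda}\le C_{\varepsilon}+\varepsilon\varphi(k)$ does not help either, as the $\varepsilon(\sum\varphi f)^{2}$ term persists. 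There is also a mismatch in your use of de~la~Vall\'ee--Poussin: from $f(0)\in Y_{\lambda}^{+}$ one obtains $G_{\lambda}\in\mathcal{G}_{1,\infty}$ with $\sum_{j}j^{\lambda-1}G_{\lambda}(j)f_{j}(0)<\infty$ (Lemma~3.3(a)), not $\sum_{j}j^{\lambda}\Phi(j)f_{j}(0)<\infty$ for some $\Phi$ with $\Phi(r)/r\to\infty$; the latter would require $f(0)\in Y_{\lambda+1}^{+}$. Finally, your diagnosis of the ``principal obstacle'' is inverted: the pure moment $M_{\lambda}$ \emph{does} satisfy a linear differential inequality under $\mu+\nu\le 3$ (that is the content of Lemma~3.1). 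The reason a strictly stronger weight is needed is not blow-up of $M_{\lambda}$, but that $M_{\lambda}$ alone does not give the tail decay required to pass to the limit in the infinite sums of \Cref{notionofsolution}.
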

	\begin{remark}
		The main contribution of \Cref{mainthm-1} is the selection of the initial data space or the solution space according to the precise growth of the interaction kernel $K_{j,k}$. This approach eliminates the need for the more restrictive condition on the initial data, specifically, $M_{p}(f(0)) < \infty $ for some  $p > 2 $, as required in \cite[Theorem 2]{Esenturk}.	
	\end{remark}
	The proof of \Cref{mainthm-1} is based on the construction of approximating solutions by defining a truncated system of equations \eqref{Tode0}--\eqref{TodeIC} for the infinite EDG system \eqref{infode}--\eqref{infIC}. Next, we establish uniform bounds for both the approximating solutions and their derivatives. With the help of these bounds and the Arzelà-Ascoli theorem, a convergent subsequence is extracted from the sequence of approximating solutions. Furthermore, it is shown that the limit function of this convergent subsequence is indeed a mild solution to \eqref{infode}--\eqref{infIC}. The main challenge in the existence proof is to pass the limit in the truncated system \eqref{Tode0}--\eqref{TodeIC}. To achieve this, we need bounds for higher moments, which are obtained through the implementation of a refined version of the de la Vall\'{e}e-Poussin theorem initially presented in \cite[Proposition~I.1.1]{Le1977} and later used in \cite[Theorem~7.1.6]{BLL_book}. Moreover, by improving the regularity of the mild solution, we establish the global existence of the classical solution that conserves both the total mass and the total number of particles in the system. 
	\begin{corollary}\label{regularityandconservation}
		Let $f=(f_{j})_{j\ge 0}$ be the mild solution to \eqref{infode}--\eqref{infIC} under the
		conditions of \Cref{mainthm-1}. Then $f=(f_{j})_{j\ge 0}$ is a continuously
		differentiable solution to \eqref{infode}--\eqref{infIC} satisfying  $M_{0}(f(t))=M_{0}(f(0))$ and $M_{1}(f(t))=\rho$ for all $t\in [0,T)$.
	\end{corollary}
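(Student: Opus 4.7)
The strategy consists of two steps: first upgrade the mild solution $f$ to a classical one by establishing continuity in $t$ of the two series appearing on the right-hand sides of \eqref{infode}--\eqref{0-infode}; second, obtain conservation of $M_{0}$ and $M_{1}$ via a truncation-and-limit argument on the moment sums. The key input for both steps is the uniform-in-time superlinear moment bound produced in the proof of \Cref{mainthm-1}: via the refined de la Vall\'{e}e-Poussin theorem and the convex-function inequalities of \cite{laurenccot2014weak}, one obtains, for each $T<\infty$, a non-negative, non-decreasing, convex $\Phi:[0,\infty)\to[0,\infty)$ with $\Phi(x)/x^{\lambda}\to\infty$ as $x\to\infty$ and
\[
K_{\Phi}(T):=\sup_{t\in[0,T]}\sum_{j\ge 0}\Phi(j)f_{j}(t)<\infty.
\]

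For the $C^{1}$-regularity, fix $j\in\mathbb{N}_{0}$ and $T<\infty$. Using $K_{j,k}\le C(j^{\mu}k^{\nu}+j^{\nu}k^{\mu})$ with $\mu,\nu\le\lambda$, the tail of the series $\sum_{k\ge 0}K_{j,k}f_{k}(t)$ admits the estimate
\[
\sup_{t\in[0,T]}\sum_{k>L}K_{j,k}f_{k}(t)\;\le\; C(j^{\mu}+j^{\nu})\,\sup_{k>L}\frac{k^{\lambda}}{\Phi(k)}\cdot K_{\Phi}(T),
\]
which tends to $0$ as $L\to\infty$ by superlinearity of $\Phi$. Combined with continuity of each $f_{k}$ (from the definition of mild solution), this uniform convergence forces the series to be continuous in $t$; an identical argument handles $\sum_{k\ge 1}K_{k,j}f_{k}(t)$. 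Substituting into the mild identity of \Cref{notionofsolution}(3) and invoking the fundamental theorem of calculus then produces a continuous $\dot f_{j}$ equal to the right-hand side of \eqref{infode}--\eqref{0-infode}, so each $f_{j}$ is $C^{1}$ and the ODE system is satisfied pointwise.

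For the conservation identities, I multiply \eqref{infode}--\eqref{0-infode} by $\phi_{j}\in\{1,j\}$ and sum over $0\le j\le N$. The symmetry of $K$ and a discrete-integration-by-parts make all interior contributions cancel, leaving a boundary remainder of the form
\[
\frac{d}{dt}M_{p}^{N}(f(t))=R_{p}^{N}(t),\qquad |R_{p}^{N}(t)|\le C N^{p}\bigl[f_{N+1}(t)A_{N+1}(t)+f_{N}(t)A_{N}(t)\bigr],
\]
where $A_{j}:=\sum_{k\ge 0}K_{j,k}f_{k}\le Cj^{\lambda}\bigl(M_{\lambda}(f)+M_{0}(f)\bigr)$. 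Consequently $|R_{p}^{N}(t)|\le C'N^{\lambda+p}\sup_{s\in[0,T]}f_{N}(s)$, and provided $\Phi$ is chosen with $\Phi(x)/x^{\lambda+p}\to\infty$ (which the convex-function inequalities of \cite{laurenccot2014weak} make available), one obtains $N^{\lambda+p}\sup_{s}f_{N}(s)\to 0$ as $N\to\infty$. Integrating in time, sending $N\to\infty$ and invoking monotone convergence $M_{p}^{N}(f(\cdot))\to M_{p}(f(\cdot))$ yields $M_{p}(f(t))=M_{p}(f(0))$ for $p\in\{0,1\}$.

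The main obstacle is the $M_{1}$-identity, where the boundary remainder scales like $N^{\lambda+1}$: the bare inclusion $f(t)\in Y_{\lambda}^{+}$ gives only $\sup_{j}j^{\lambda}f_{j}(t)\le M_{\lambda}(f(t))$, which is inadequate. Sharpening the de la Vall\'{e}e-Poussin weight $\Phi$ so that $\Phi(x)/x^{\lambda+1}\to\infty$, rather than merely $\Phi(x)/x^{\lambda}\to\infty$, is the only genuinely nontrivial step; once this strengthened moment bound has been secured from the existence proof, the remaining ingredients --- telescoping, uniform tail control, and monotone convergence --- are routine.
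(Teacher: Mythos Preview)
Your argument for $C^{1}$-regularity is fine and matches the paper's: uniform tail control via the superlinear weight, continuity of each $f_{k}$, and the fundamental theorem of calculus. Your $M_{0}$-conservation argument also goes through, since the boundary remainder there only requires $\Phi(x)/x^{\lambda}\to\infty$, which is exactly what the existence proof provides.

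The genuine gap is in the $M_{1}$-conservation. Your boundary remainder does scale like $N^{\lambda+1}\sup_{s}f_{N}(s)$, and you correctly identify that killing it would need a propagated weight $\Phi$ with $\Phi(x)/x^{\lambda+1}\to\infty$. But this strengthened weight is \emph{not} available from the hypothesis $f(0)\in Y_{\lambda}^{+}$. The de la Vall\'{e}e--Poussin construction applied to $M_{\lambda}(f(0))<\infty$ produces $\Phi(j)=j^{\lambda-1}G_{\lambda}(j)$ with $G_{\lambda}\in\mathcal{G}_{1,\infty}$; since $G_{\lambda}'$ is concave with $G_{\lambda}'(0)=0$, one has $G_{\lambda}(x)=O(x^{2})$, hence $\Phi(x)=O(x^{\lambda+1})$ and $\Phi(x)/x^{\lambda+1}$ stays bounded rather than diverging. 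Demanding $\Phi(x)/x^{\lambda+1}\to\infty$ effectively reinstates the extra moment condition $M_{p}(f(0))<\infty$ for some $p>\lambda$ --- precisely what the theorem is meant to dispense with. Concretely, for initial data like $f_{j}(0)\sim j^{-\lambda-1}(\log j)^{-2}$ one has $f(0)\in Y_{\lambda}^{+}$ but no weight of the required growth exists.

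The paper avoids this obstacle by a different mechanism: rather than estimating boundary terms of the limiting ODE, it uses the \emph{exact} conservation laws $M_{1}^{N}(f^{N}(t))=M_{1}^{N}(f(0))$ satisfied by the truncated approximants. Splitting
\[
\Bigl|\sum_{j}j\bigl(f_{j}(0)-f_{j}(t)\bigr)\Bigr|\le \sum_{j<M}j\,|f_{j}^{N}(t)-f_{j}(t)|+\sum_{j=M}^{N}jf_{j}^{N}(t)+\sum_{j>N}jf_{j}(0)+\sum_{j\ge M}jf_{j}(t),
\]
the only nontrivial tail is $\sum_{j\ge M}jf_{j}^{N}(t)$, and since $j\le j^{\lambda}$ for $j\ge 1$ this is dominated by $\sup_{j\ge M}\bigl(j/G_{\lambda}(j)\bigr)\cdot C_{1,2}(T)$, which vanishes as $M\to\infty$ using only $G_{\lambda}(j)/j\to\infty$. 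No factor $N^{\lambda+1}$ ever appears. You should replace your direct boundary-remainder estimate for $M_{1}$ by this comparison with the truncated system.
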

	The global existence of a solution to \eqref{infode}--\eqref{infIC}, particularly, depends on the growth condition $K_{j,k}\leq C(j^{\mu}k^{\nu}+j^{\nu}k^{\mu})$ $(\mu+\nu\leq3,$ $\mu,\nu\leq2)$ stated in the \Cref{mainthm-1}. This growth condition is consistent with findings from physical studies done in  \cite{Naim} under similar assumptions about interaction kernels of specific forms. However, for symmetric interaction kernels with the growth rates exceeding the ones specified in \eqref{quadraticcoagassum}, the finite time existence of solutions is shown through the following theorem.
	\begin{theorem}\label{localsolution}
	Let $K_{j,k}$ be the symmetric interaction kernel satisfying $K_{j,k}\leq Cj^{2}k^{2}$ for $(j,k)\in N_{0}\times N_{0}$ and $C>0$. If $f(0)=(f_{j}(0))_{j\ge 0} \in Y_{2}^{+}$ and $M_{2}(f(0))>0$, then the system  \eqref{infode}--\eqref{infIC} has a continuously differentiable solution $( f_{j})_{j\ge 0}\in  Y_{2}^{+}$  to \eqref{infode}--\eqref{infIC} satisfying  $M_{0}(f(t))=M_{0}(f(0))$ and $M_{1}(f(t))=M_{1}(f(0))$ for all $t\in [0,T_{0}]$ with $T_{0}<\left(2M_{2}(f(0))C\right)^{-1}$. 
	\end{theorem}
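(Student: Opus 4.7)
The plan is to mirror the truncation-plus-compactness scheme of \Cref{mainthm-1}, but only at the second-moment level, since the quadratic growth $K_{j,k}\le Cj^2k^2$ saturates this scale. For each $N\ge 1$ I would first introduce the truncated finite-dimensional system obtained from \eqref{infode}--\eqref{0-infode} by replacing $K_{j,k}$ with $K_{j,k}^N:=K_{j,k}\mathbf{1}_{\{j\le N,\,k\le N\}}$ and freezing all $f_j$ with $j>N$ at zero. Only finitely many components are active, so Cauchy-Lipschitz provides a global nonnegative solution $f^N=(f_j^N)_{j\ge 0}$, and the monomer-exchange structure forces $M_0^N(t)=M_0^N(0)$ and $M_1^N(t)=M_1^N(0)$ exactly.

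The crucial a priori estimate is a second-moment bound uniform in $N$. Testing the truncated equation against $\phi_j=j^2$ gives
\begin{equation*}
\frac{d}{dt}M_2^N(t)=2\sum_{1\le j,\,k\le N} K_{j,k}^N f_j^N f_k^N\,(k-j+1).
\end{equation*}
The antisymmetric piece $\sum K_{j,k}^N f_j^N f_k^N(k-j)$ vanishes by the symmetry of $K_{j,k}$ (note that $K_{j,0}=0$ under $K_{j,k}\le Cj^2k^2$), leaving $\tfrac{d}{dt}M_2^N\le 2\sum_{j,k\ge 1} K_{j,k}^N f_j^N f_k^N\le 2C(M_2^N)^2$. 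Comparison with the scalar ODE $\dot y=2Cy^2$ then yields $M_2^N(t)\le M_2(f(0))/(1-2CM_2(f(0))\,t)$ for every $t<T^*:=(2CM_2(f(0)))^{-1}$, uniformly in $N$. Fixing $T_0<T^*$ produces a uniform bound $M:=\sup_N\sup_{[0,T_0]}M_2^N<\infty$.

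With this $M_2$-bound each $\dot f_j^N$ is bounded independently of $N$ (indeed $\sum_k K_{j,k}f_k^N\le Cj^2 M$), so Arzel\`a--Ascoli and a diagonal extraction deliver a subsequence converging uniformly on $[0,T_0]$ in every coordinate to some $f=(f_j)\in C([0,T_0],Y_2^+)$ with $M_2(f(t))\le M$ by Fatou. The main obstacle is identifying $f$ as a mild solution, which requires passing to the limit in sums such as $\sum_k K_{j,k} f_k^N(s)$; a bare $M_2^N$-bound only gives the crude estimate $\sum_{k>R} k^2 f_k^N\le M$, whereas one actually needs a vanishing tail as $R\to\infty$, uniformly in $N$ and $s\in[0,T_0]$. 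I would close this gap exactly as in the proof of \Cref{mainthm-1}: apply the refined de la Vall\'ee--Poussin theorem to $(j^2 f_j(0))_j$ to produce a nondecreasing convex $\Phi$ with $\Phi(0)=0$, $\Phi(r)/r\to\infty$, and $\sum_j \Phi(j^2) f_j(0)<\infty$; then estimate $\tfrac{d}{dt}\sum_j \Phi(j^2) f_j^N$ using the convex-function inequalities of \cite{laurenccot2014weak} and close via Gr\"onwall.

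Once uniform integrability is secured, passage to the limit in the mild formulation identifies $f$ as a mild solution on $[0,T_0]$. Upgrading to a classical $C^1$ solution and establishing the conservation statements $M_0(f(t))=M_0(f(0))$ and $M_1(f(t))=M_1(f(0))$ then follow the same route as \Cref{regularityandconservation}: continuity in $t$ of the coupling sums yields continuity of each $\dot f_j$, while testing against $\phi\equiv 1$ and $\phi_j=j$ reduces to telescoping expressions that cancel to zero, with the same de la Vall\'ee--Poussin bound justifying the exchange of limit and infinite summation. I expect the delicate step to be precisely the propagation of the superlinear weight $\Phi(j^2)$ against a kernel of matching quadratic growth, since the convex-inequality machinery must be calibrated so that the leading-order terms cancel before Gr\"onwall can be invoked.
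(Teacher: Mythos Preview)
Your proposal is correct and follows essentially the same route as the paper: truncate, derive the differential inequality $\tfrac{d}{dt}M_2^N\le 2C(M_2^N)^2$ to get a uniform second-moment bound on $[0,T_0]$ with $T_0<(2CM_2(f(0)))^{-1}$, extract a limit via Arzel\`a--Ascoli, upgrade the $M_2$-bound to a slightly superquadratic weight via de la Vall\'ee--Poussin plus Gronwall to secure uniform tail control, and then repeat the regularity/conservation argument of \Cref{regularityandconservation}. The only cosmetic difference is that the paper packages the superquadratic weight as $jG_2(j)$ with $G_2\in\mathcal{G}_{1,\infty}$ (i.e.\ the case $\lambda=2$ of \Cref{Tail}, so inequality \eqref{divconvexinequality-12} applies verbatim), whereas you write it as $\Phi(j^2)$; both forms propagate under the same convex-function inequalities and close by Gronwall against the bounded $M_2^N$.
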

\begin{remark}
	\Cref{mainthm-1} and \Cref{localsolution} clearly extend the global and local existence results established in \cite[Theorem 2 and Corollary 3]{Esenturk}, respectively.
\end{remark}
	The work done in the physics literature \cite{Naim} suggest that $\eta = 3/2$ is the critical exponent beyond which the gelation phenomenon occurs, and occurrence of instantaneous gelation takes place for interaction kernels growing super quadratically. In the next two results, specifically in \Cref{finitetimegel} and \Cref{Tgel}, we have mathematically established the occurrence of finite-time gelation and instantaneous gelation to some extent, which were open problems mentioned in \cite[Section 4]{Esenturk}. 
	\begin{theorem}\label{finitetimegel}(Finite time gelation)
	 Suppose that the symmetric interaction kernel $K_{j,k}$ satisfies $C_{1}\left(j^{2}k^{\alpha}+j^{\alpha}k^{2}\right)\leq K_{j,k}\leq Cj^{2}k^{2}$ for all $(j,k)\in N_{0}\times N_{0}$, where $1<\alpha\leq 2$ and $C, C_{1}>0$.
		Assume also that $M_{\alpha}(f(0))>0$ and  $f(0)=(f_{j}(0))_{j\ge 0} \in Y_{2+\alpha}^{+}$. Then, the gelation time $T_{gel}$ for the solution  $( f_{j})_{j\ge 0}\in  Y_{2}^{+}$ to \eqref{infode}--\eqref{infIC}, constructed in \Cref{localsolution}, is finite. Additionally, for the specific class of interaction kernels  $ K_{j,k} = C j^2 k^2  $ for all  $ (j, k) \in \mathbb{N}_0 \times \mathbb{N}_0  $ and $C>0$, the gelation time can be expressed as  $ T_{\text{gel}} =\left(2M_{2}(f(0))C\right)^{-1}$.	
	\end{theorem}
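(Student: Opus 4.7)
The plan is to derive a quadratic differential inequality for the $\alpha$-moment of the local solution constructed in \Cref{localsolution}, so that $M_\alpha$ blows up in finite time, and then convert this blow-up into gelation via the criterion \cite[Lemma 9.2.2]{BLL_book} recalled in the introduction. First, I would establish a weak formulation: for a test sequence $(\varphi_j)_{j\ge 0}$ with $\varphi_0=0$, reindexing the right-hand sides of \eqref{infode}--\eqref{0-infode}, using the symmetry $K_{j,k}=K_{k,j}$, and noting that $K_{j,k}\le Cj^2k^2$ forces $K_{j,0}=0$, yields
$$\frac{d}{dt}\sum_{j\ge 0}\varphi_j f_j\;=\;\sum_{j,k\ge 1}K_{j,k}f_jf_k\bigl[(\varphi_{j-1}-\varphi_j)+(\varphi_{k+1}-\varphi_k)\bigr].$$
Choosing $\varphi_j=j^\alpha$ and symmetrizing the right-hand side under $j\leftrightarrow k$ converts the bracket into $\tfrac12[\Delta^2_\alpha(j)+\Delta^2_\alpha(k)]$, where $\Delta^2_\alpha(j):=(j+1)^\alpha+(j-1)^\alpha-2j^\alpha$ is the discrete second difference of $x\mapsto x^\alpha$. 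The exchange of time derivative and infinite summation is justified on the existence interval of \Cref{localsolution} by the propagation of moments (\Cref{momentpropagation}) applied to $f(0)\in Y_{2+\alpha}^+$.

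The technical heart of the argument is the convexity bound $\Delta^2_\alpha(j)\ge c_\alpha j^{\alpha-2}$ for every integer $j\ge 1$ and some $c_\alpha>0$ depending only on $\alpha\in(1,2]$. Writing $g(x)=x^\alpha$, two applications of Taylor's theorem give $\Delta^2_\alpha(j)=\tfrac12[g''(\xi_1)+g''(\xi_2)]$ with $\xi_1\in(j,j+1)$ and $\xi_2\in(j-1,j)$; since $g''(x)=\alpha(\alpha-1)x^{\alpha-2}$ is nonincreasing for $\alpha\le 2$, this yields $\Delta^2_\alpha(j)\ge\alpha(\alpha-1)(j+1)^{\alpha-2}\ge\alpha(\alpha-1)2^{\alpha-2}j^{\alpha-2}$, so one may take $c_\alpha=\alpha(\alpha-1)2^{\alpha-2}$. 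Substituting $K_{j,k}\ge C_1(j^2k^\alpha+j^\alpha k^2)$ and this convexity bound into the symmetrized identity, the double sums decouple into products of moments, giving
$$\frac{dM_\alpha}{dt}\;\ge\; C_1c_\alpha\bigl(M_\alpha^2+M_2M_{2\alpha-2}\bigr)\;\ge\; C_1c_\alpha M_\alpha^2.$$
Since $M_\alpha(0)>0$, integrating this Riccati-type inequality forces $M_\alpha(t)\to\infty$ at some time $T^*\le 1/(C_1c_\alpha M_\alpha(0))$. Because $\alpha\le 2$ and $j^2\ge j^\alpha$ for $j\ge 1$ imply $M_2\ge M_\alpha$, the integer moment $M_2$ also blows up by $T^*$, and \cite[Lemma 9.2.2]{BLL_book} yields $T_{gel}\le T^*<\infty$.

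For the explicit kernel $K_{j,k}=Cj^2k^2$, applying the weak formulation with $\varphi_j=j^2$ yields $(\varphi_{j-1}-\varphi_j)+(\varphi_{k+1}-\varphi_k)=2(k-j)+2$; the antisymmetric part $2(k-j)$ vanishes against the symmetric double sum, leaving the exact identity $\frac{dM_2}{dt}=2CM_2^2$. Separating variables gives $M_2(t)=M_2(0)/(1-2CM_2(0)t)$, which diverges precisely at $T^*=(2CM_2(0))^{-1}$, whence $T_{gel}\le T^*$. The matching lower bound $T_{gel}\ge T^*$ follows from \Cref{localsolution}: for every $T_0<T^*$ the local solution conserves $M_1$ on $[0,T_0]$, so mass loss cannot occur before $T^*$.

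The main obstacle I anticipate is the rigorous justification of the term-by-term manipulations leading to the weak formulation and the Riccati inequality. One must verify absolute convergence of $\sum_j j^\alpha \dot f_j$ and of the symmetrized double sum on the existence interval, which requires propagating finiteness of $M_{2+\alpha}$ via \Cref{momentpropagation} together with the upper bound $K_{j,k}\le Cj^2k^2$; the hypothesis $f(0)\in Y_{2+\alpha}^+$ is tailored exactly for this. The convexity estimate itself is elementary but must hold uniformly down to $j=1$, where the Taylor remainder argument is most delicate.
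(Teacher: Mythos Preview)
Your proposal is correct and follows essentially the same route as the paper: derive the moment identity for $M_\alpha$ using symmetry of $K_{j,k}$ and $K_{j,0}=0$, bound the discrete second difference below by $\alpha(\alpha-1)(j+1)^{\alpha-2}\ge \alpha(\alpha-1)2^{\alpha-2}j^{\alpha-2}$, feed in the lower bound on $K_{j,k}$ to obtain $\dot M_\alpha\ge C_1\alpha(\alpha-1)2^{\alpha-2}M_\alpha^2$, and solve the Riccati inequality; for $K_{j,k}=Cj^2k^2$ the identity becomes exact and combines with the mass conservation from \Cref{localsolution} to pin down $T_{gel}$. The only cosmetic differences are that the paper invokes the mean value theorem for second differences (one intermediate point $\theta(j)\in(j-1,j+1)$) rather than your two Taylor expansions, and applies the kernel lower bound directly to the unsymmetrized sum $\sum_j \Delta^2_\alpha(j)f_j\sum_k K_{j,k}f_k$ rather than first symmetrizing; both yield the same constant and the same blow-up time.
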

The proof of \Cref{finitetimegel} utilizes some higher moment bounds and Gronwall's integral inequality. The main difficulty in this proof lies in the selection of a suitable moment that blows up in finite time; specifically, in this case, that moment is $M_{\alpha}(f)$ for some $\alpha > 1$. We have demonstrated the existence of a local solution in \Cref{localsolution} for interaction kernels satisfying $K_{j,k}\leq Cj^{2}k^{2}$. The subsequent \Cref{nonexistenceresult-1} reveals that the local solution cannot be extended to a global solution when the interaction kernel has a faster-growing lower bound.
	\begin{corollary}[Non-existence of global solution]\label{nonexistenceresult-1}
		Consider the infinite EDG system \eqref{infode}--\eqref{infIC}. Suppose that the symmetric interaction kernel $K_{j,k}$ satisfies
		\begin{align*}
			C_{1}\left(j^{2}k^{\alpha}+j^{\alpha}k^{2}\right)\leq K_{j,k}\leq Cj^{2}k^{2},\quad\text{for} \; (j,k)\in \mathbb{N}_{0} \times \mathbb{N}_{0},
		\end{align*}  
		where $1<\alpha\leq 2$ and $C, C_{1}>0.$
	Further, assume that  $ M_{\alpha}(f(0))> 0$ and  $f(0)=(f_{j}(0))_{j\ge 0}\in Y_{2+\alpha}^{+}$. Then, there is no global mass conserving solution $ \left(f_{j}\right)_{j\ge 0}\in  Y_{2}^{+}$ on $[0,\infty)$ to \eqref{infode}--\eqref{infIC}. 
	\end{corollary}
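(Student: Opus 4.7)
The plan is a direct contradiction argument resting on \Cref{finitetimegel}. Suppose, toward a contradiction, that there exists a global mass-conserving mild solution $f=(f_{j})_{j\ge 0}\in Y_{2}^{+}$ to \eqref{infode}--\eqref{infIC} on $[0,\infty)$. Mass conservation means $\sum_{j=1}^{\infty}jf_{j}(t)=\sum_{j=1}^{\infty}jf_{j}(0)$ for every $t\ge 0$, so the very definition
\[
T_{gel}:=\inf\Bigl\{t\ge 0:\sum_{j=1}^{\infty}jf_{j}(t)<\sum_{j=1}^{\infty}jf_{j}(0)\Bigr\}
\]
immediately yields $T_{gel}=+\infty$ for this solution.

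On the other hand, the kernel hypothesis $C_{1}(j^{2}k^{\alpha}+j^{\alpha}k^{2})\le K_{j,k}\le Cj^{2}k^{2}$, together with $M_{\alpha}(f(0))>0$ and $f(0)\in Y_{2+\alpha}^{+}$, is precisely what is assumed in \Cref{finitetimegel}. Applying that theorem gives $T_{gel}<\infty$, directly contradicting the previous paragraph and thereby proving the corollary.

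The one delicacy is that \Cref{finitetimegel} is formulated for the specific local solution constructed in \Cref{localsolution}, whereas the assumed global solution $f$ could \emph{a priori} differ from it. Two complementary routes close this gap. The first is to invoke uniqueness of mass-conserving solutions in $Y_{2}^{+}$ on the initial interval $[0,T_{0}]$ of local existence (with $T_{0}<(2M_{2}(f(0))C)^{-1}$), which identifies our $f$ with the solution of \Cref{localsolution} on $[0,T_{0}]$; the finite $T_{gel}$ produced by \Cref{finitetimegel} on that interval then still contradicts global mass conservation of $f$. The second, and more structural, route is to observe that the proof of \Cref{finitetimegel} relies only on the mild-solution framework, the initial-data bound $f(0)\in Y_{2+\alpha}^{+}$, the lower estimate on $K_{j,k}$, and the moment-propagation result combined with Gronwall's inequality; none of these ingredients are tied to the particular construction of \Cref{localsolution}, so the blow-up of $M_{\alpha}(f(\cdot))$ in finite time transports verbatim to any global mass-conserving solution in $Y_{2}^{+}$ satisfying the hypotheses. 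The main (and only) obstacle is thus this uniqueness/applicability bridge; once it is in place, the incompatibility between $T_{gel}<\infty$ and mass conservation closes the argument.
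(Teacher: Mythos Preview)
Your argument is essentially the paper's: assume a global mass-conserving solution in $Y_{2}^{+}$ and derive a contradiction from \Cref{finitetimegel}. The paper's two-sentence proof phrases the contradiction slightly differently---$(f_{j})\in Y_{2}^{+}$ forces $M_{\alpha}(f(t))\le M_{2}(f(t))<\infty$ for all $t$, which is incompatible with the finite-time blow-up of $M_{\alpha}$---and in fact glosses over the very applicability/uniqueness issue you carefully flag.
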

	\begin{theorem}\label{Tgel}(Instantaneous gelation)
		Let us consider the EDG system \eqref{infode}--\eqref{infIC}. Let $K_{j,k}$ be a non-negative symmetric interaction kernel satisfying $K_{j,0}=0$ for all $j \in \mathbb{N}_{0}$, with $K_{j,k}\geq C\left(j^{\beta}+k^{\beta}\right)$ for some $C>0$ and $\beta>2$.
		Assume also that $M_{0}(f(0))\geq C_{2}>0$ and $f(0) \in Y_{n}^{+}$ for all $n\in \mathbb{N}$. Then, the gelation time for any solution $\left(f_{j}\right)_{j\ge 0}\in  Y_{2}^{+}$ to \eqref{infode}--\eqref{infIC} is given by $T_{gel}=0$.
	\end{theorem}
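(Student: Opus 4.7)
The proof proceeds by contradiction. I would suppose $T_{gel}>0$, fix $T\in(0,T_{gel})$, and aim to derive a differential inequality $\dot M_n\ge A_n M_n^{1+\gamma_n}$ on $[0,T]$ whose blow-up time $T_n^*$ tends to $0$ as $n\to\infty$; choosing $n$ large enough then contradicts the finiteness of $M_n$ on $[0,T]$. Since $T_{gel}>T$, mass conservation holds on $[0,T]$, so $M_0(f(t))\equiv M_0^*:=M_0(f(0))\ge C_2$, and by \cite[Lemma 9.2.2]{BLL_book} recalled in the Introduction every moment $M_m(f(t))$ is finite on $[0,T]$.

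I would first derive the moment identity. Testing the weak form of \eqref{infode}--\eqref{0-infode} against $\varphi(j)=j^n$ for an integer $n\ge 2$ (made rigorous via the truncation $\varphi_R(j)=(\min\{j,R\})^n$ together with dominated convergence, licit because $M_{n+\beta-2}(f(t))<\infty$ on $[0,T]$), and applying the binomial theorem yields
\begin{align*}
\frac{d}{dt}M_n(f(t))=\sum_{l=1}^n\binom{n}{l}\sum_{j,k\ge 0}K_{j,k}f_jf_k\bigl[(-1)^l j^{n-l}+k^{n-l}\bigr].
\end{align*}
The symmetry of $K$ together with $K_{0,k}=0$ makes every odd-$l$ contribution $\sum K_{j,k}f_jf_k(k^{n-l}-j^{n-l})$ vanish after the interchange $j\leftrightarrow k$, while every even-$l\ge 2$ contribution is non-negative. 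Retaining only the $l=2$ term and using symmetry once more gives $\dot M_n\ge n(n-1)\sum_{j,k\ge 0}K_{j,k}f_jf_k\,j^{n-2}$.

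Using the kernel lower bound $K_{j,k}\ge C(j^\beta+k^\beta)$ and discarding the positive $M_{n-2}M_\beta$ contribution produces $\dot M_n\ge Cn(n-1)M_0(f)M_{n+\beta-2}(f)$. The moment interpolation $M_{n+\beta-2}\ge M_n^{(n+\beta-2)/n}M_0^{-(\beta-2)/n}$ (Jensen applied to the probability measure $(f_j/M_0)_{j\ge 0}$), combined with $M_0(f(t))\equiv M_0^*$ and $(M_0^*)^{1-(\beta-2)/n}\ge\min\{1,C_2\}$ for $n>\beta-2$, delivers
\begin{align*}
\dot M_n(f(t))\ge A_n\,M_n(f(t))^{1+\gamma_n},\qquad A_n:=Cn(n-1)\min\{1,C_2\},\quad \gamma_n:=\tfrac{\beta-2}{n}>0.
\end{align*}
Separation of variables then forces $M_n(f(t))\to\infty$ no later than $T_n^*:=[\gamma_n A_n M_n(f(0))^{\gamma_n}]^{-1}$.

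It remains to check that $T_n^*\to 0$ as $n\to\infty$. The prefactor $\gamma_n A_n\ge C(\beta-2)\min\{1,C_2\}(n-1)$ diverges, while $M_n(f(0))^{\gamma_n}$ is bounded below: choosing any $j_0\ge 1$ with $f_{j_0}(0)>0$ (the non-triviality of the datum implicit in the theorem, without which no exchange reaction could fire and no gelation could occur) one obtains $M_n(f(0))\ge j_0^n f_{j_0}(0)$, whence $M_n(f(0))^{(\beta-2)/n}\to j_0^{\beta-2}\ge 1$. Hence $T_n^*\to 0$, and picking $n$ with $T_n^*<T$ produces some $s^*\in(0,T)$ at which $M_n(f(s^*))=\infty$, contradicting the finiteness of every moment on $[0,T]$. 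Thus $T_{gel}=0$. I expect the main obstacle to be the rigorous justification of the moment identity: under only the hypothesis $f(t)\in Y_2^+$ the series defining $\dot M_n$ need not converge absolutely a priori, so the truncation-and-limit argument depends crucially on the higher-moment propagation that becomes available only after combining the standing mass-conservation assumption with \cite[Lemma 9.2.2]{BLL_book}.
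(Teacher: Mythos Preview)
Your proof is correct and follows the same overall strategy as the paper: assume $T_{gel}>0$, derive a superlinear differential inequality for $M_n$ from the kernel lower bound combined with a Jensen moment interpolation, and show the resulting blow-up times tend to $0$ as $n\to\infty$. Two differences are worth noting. First, the paper applies Jensen with the probability weight $jf_j/M_1$ rather than your $f_j/M_0$, yielding the exponent $\Lambda=(\beta-2)/(n-1)$; since $M_n(f(0))\ge M_1(f(0))$ the factor $(M_n(f(0))/M_1(f(0)))^{-\Lambda}\le 1$ is then automatic, which bypasses your extra step of bounding $M_n(f(0))^{\gamma_n}$ from below via a specific $j_0\ge 1$ with $f_{j_0}(0)>0$. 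Second, and more substantively, the step you identify as routine---the finiteness of all moments on $[0,T]$ before gelation, which you dispatch by citing \cite[Lemma~9.2.2]{BLL_book}---is precisely the step the paper treats as requiring independent work: it proves a dedicated result (Lemma~\ref{finintemomentproperty}) via an exponential tail-decay estimate that explicitly exploits the lower bound $K_{j,k}\ge Cj^\beta$. The BLL lemma is formulated for Smoluchowski coagulation, and the paper's Introduction quotes it only as heuristic motivation; so what you flag as ``the main obstacle'' (justifying the moment identity) is in fact handled easily once the moments are known to be finite, whereas the propagation of higher moments for EDG is where the paper invests its effort.
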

	The proof of \Cref{Tgel} relies on the finiteness of all the higher moments of the initial data $f(0)=\left(f_{j}(0)\right)_{j\ge 0}$. With the aid of these finite higher moments of the initial data, we establish the finiteness of all higher moments of the solution to \eqref{infode}--\eqref{infIC}. Then, the proof of \Cref{Tgel} is given by Jensen's inequality and Gronwall's integral inequality.
	\begin{corollary}[Non-existence result]\label{nonexistenceresult-2}
		Let $K_{j,k}$ be a non-negative symmetric interaction kernel satisfying $K_{j,0}=0$ for all $j \in \mathbb{N}_{0}$, with $K_{j,k}\geq C\left(j^{\beta}+k^{\beta}\right)$ for some $C>0$ and $\beta>2$. Additionally, for every $n\in \mathbb{N}$, assume $M_{0}(f(0))\geq C_{2}>0$ and $f(0) \in Y_{n}^{+}$. Then
		there is no solution $ \left(f_{j}\right)_{j\ge 0}\in  Y_{2}^{+}$ to \eqref{infode}--\eqref{infIC} on any interval $[0,T)$ for $T>0$.
	\end{corollary}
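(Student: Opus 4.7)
My plan is to prove the corollary by a direct contradiction argument based on \Cref{Tgel}. Suppose, toward contradiction, that there exists a mild solution $(f_{j})_{j\ge 0}\in Y_{2}^{+}$ to \eqref{infode}--\eqref{infIC} on some interval $[0,T)$ with $T>0$. All the hypotheses of \Cref{Tgel} are in force (non-negative symmetric kernel with $K_{j,0}=0$, lower bound $K_{j,k}\geq C(j^{\beta}+k^{\beta})$ with $\beta>2$, $M_{0}(f(0))\geq C_{2}>0$, and $f(0)\in Y_{n}^{+}$ for every $n\in\mathbb{N}$), so \Cref{Tgel} applies to this solution and yields $T_{\mathrm{gel}}=0$. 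By the definition of the gelation time, this means that in every neighborhood of $t=0$ there exist instants $t$ with $M_{1}(f(t))<M_{1}(f(0))$.

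\textbf{Deriving the contradiction via local mass conservation.} The second ingredient is to show that any mild solution in $Y_{2}^{+}$ must, in fact, conserve the first moment on a short time interval. First I would use the moment-propagation estimates that underlie the proof of \Cref{Tgel} — which show that finiteness of all moments of $f(0)$ is transferred to finiteness of all moments of $f(t)$ on $[0,T)$ — to justify Fubini. Then I would multiply the mild formulation (item 3 of \Cref{notionofsolution}) by $j$, sum over $j\in\mathbb{N}_{0}$, interchange sum and integral, and carry out the standard re-indexing $j\mapsto j-1$ (together with symmetry $K_{j,k}=K_{k,j}$) to collapse the telescoping contributions. This yields
\begin{equation*}
M_{1}(f(t))=M_{1}(f(0))\quad\text{for all }t\in[0,\delta),
\end{equation*}
for some sufficiently small $\delta\in(0,T)$ on which the higher-moment bounds remain finite. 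This directly contradicts the conclusion $T_{\mathrm{gel}}=0$ from \Cref{Tgel}, finishing the proof.

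\textbf{Main obstacle.} The delicate point is the rigorous justification of the interchange of sum and integral when deriving local mass conservation. A naive bound using $K_{j,k}\leq C(j^{\beta}+k^{\beta})$ is not available, since here we only have the lower bound. I would get around this by first truncating the sum at level $N$, obtaining an identity for the truncated moment $M_{1}^{N}(f(t))$ in which only the boundary terms of the telescoping are visible, and then passing $N\to\infty$ using the finiteness of $M_{1+\gamma}(f(t))$ for a suitable $\gamma>0$ (again supplied by the higher-moment propagation inside the proof of \Cref{Tgel}). The boundary terms can be controlled by $N^{-\gamma}M_{1+\gamma}(f(t))\to 0$, which closes the argument and yields the required contradiction on $[0,\delta)$.
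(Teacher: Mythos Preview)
Your contradiction strategy --- assume a solution exists on $[0,T)$, invoke \Cref{Tgel} to obtain $T_{\mathrm{gel}}=0$, and then exhibit something that forces $T_{\mathrm{gel}}>0$ --- is exactly the paper's. The difference lies in the witness you choose for the second half.

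The paper does not re-derive mass conservation. It simply cites \Cref{finintemomentproperty} (the moment-propagation lemma, which is precisely the ``moment-propagation estimates underlying the proof of \Cref{Tgel}'' that you mention): for any mild solution in $Y_{2}^{+}$ on $[0,T)$ one has $M_{n}(f(t))<\infty$ for every $n\in\mathbb{N}$ and every $t\in[0,T)$. This is already incompatible with $T_{\mathrm{gel}}=0$, because the lower bound derived in the proof of \Cref{Tgel} forces $M_{n}(f(\cdot))$ to blow up at a time of order $1/n$, which for large $n$ lies strictly inside $[0,T)$. That is the whole argument.

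Your route is correct but longer: after invoking the same moment propagation to justify Fubini, you additionally perform the telescoping summation to conclude $M_{1}(f(t))=M_{1}(f(0))$ on $[0,\delta)$. This step is essentially a reproof of \Cref{monotoneM2}, and your truncation/boundary-term analysis with $N^{-\gamma}M_{1+\gamma}$ is in the spirit of \Cref{L-tail}. There is nothing wrong with it, but once you have appealed to the moment-propagation result you already possess the contradiction at the level of higher moments, so the detour through $M_{1}$ is unnecessary. What your variant buys is a contradiction that reads off directly from the \emph{definition} of $T_{\mathrm{gel}}$ (loss of mass), whereas the paper's version appeals to the link between moment blow-up and gelation; the paper's version is shorter because the relevant lemmas are already in hand.
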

	\section{Truncated Problem}
	\noindent Following the approach of \cite{Esenturk, ali2024well}, the proof of \Cref{mainthm-1} involves taking the limit of solutions derived from finite-dimensional systems of ordinary differential equations obtained by truncating the infinite EDG system \eqref{infode}--\eqref{infIC}. Specifically, for $N\ge 2$, we examine the following truncated system consisting of $N+1$ ordinary differential equations
	\begin{align}
		\text{\ }\dot{ f}_{0}^{N}&= f_{1}^{N}\sum_{k=0}^{N-1}K_{1,k} f_{k}^{N}- f_{0}%
		^{N}\sum_{k=1}^{N}K_{k,0} f_{k}^{N}, \label{Tode0}\\
		\text{\ }\dot{ f}_{j}^{N}  &  = f_{j+1}^{N}\sum_{k=0}^{N-1}K_{j+1,k} f_{k}%
		^{N}- f_{j}^{N}\sum_{k=0}^{N-1}K_{j,k} f_{k}^{N} \label{Todej}\\
		&\quad - f_{j}^{N}\sum_{k=1}^{N}K_{k,j} f_{k}^{N}+ f_{j-1}^{N}\sum_{k=1}%
		^{N}K_{k,j-1} f_{k}^{N},\quad \text{ for}\; 1\leq j\leq N-1 \nonumber\\
		\text{and} \quad \dot{ f}_{N}^{N}&=- f_{N}^{N}\sum_{k=0}^{N-1}K_{N,k} f_{k}^{N}+ f_{N-1}^{N}%
		\sum_{k=1}^{N}K_{k,N-1} f_{k}^{N}, \label{TodeN}
	\end{align}
	with 
	\begin{equation}
		f_{j}^{N}(0)= f_{j}(0)\geq0, \; \text{ for}\;0\leq j\leq N. \label{TodeIC}
	\end{equation}
	It is evident from the classical theory of ordinary differential equations, specifically from Picard's theorem, that the truncated system \eqref{Tode0}--\eqref{TodeIC} has a unique, continuously differentiable local solution. Proceeding as in \cite{Esenturk}, we present the following lemma, which will be used to establish the global existence of solutions to \eqref{Tode0}--\eqref{TodeIC} and for future reference.
	\begin{lemma}\label{L-divergenceform}
		\bigskip Let $ (h_{j})_{j=0}^{N}\in \mathbb{R}^{N+1}$ be a set of real numbers with $h_{j}\geq 0$ and $N \ge 2$.
		If $K_{j,k}$ is the symmetric interaction kernel, then we have
		\begin{align}
			\sum_{j=0}^{N} h_{j}\frac{d f_{j}^{N}}{dt}  &  =\sum_{j=1}^{N-1}( h_{j+1}%
			-2 h_{j}+ h_{j-1}) f_{j}^{N}\sum_{k=1}^{N-1}K_{j,k} f_{k}^{N}\nonumber \\
			&  +\sum_{j=1}^{N-1}\left(  ( h_{j-1}- h_{j})+( h_{1}- h_{0})\right)
			K_{j,0} f_{j}^{N} f_{0}^{N}\nonumber\\
			&  +\sum_{j=1}^{N-1}(( h_{j+1}- h_{j})+( h_{N-1}- h_{N})) f_{j}^{N}K_{N,j} f_{N}%
			^{N}\nonumber\\
			&  +(( h_{N-1}- h_{N})+( h_{1}- h_{0})) f_{N}^{N}K_{N,0} f_{0}^{N}.\label{mom-red2}
		\end{align}
	\end{lemma}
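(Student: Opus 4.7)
The plan is to substitute the right-hand sides of the truncated ODEs \eqref{Tode0}--\eqref{TodeN} directly into $\sum_{j=0}^N h_j \dot f_j^N$, and then reorganize the resulting double sums by interpreting each term as the contribution of a single monomer-transfer reaction. For a reaction in which a cluster of size $a\in\{1,\dots,N\}$ transfers a monomer to a cluster of size $b\in\{0,\dots,N-1\}$, the rate is $K_{a,b} f_a^N f_b^N$ and the net effect on $\sum_j h_j f_j^N$ per unit rate is $(h_{a-1}-h_a)+(h_{b+1}-h_b)$. A careful check against the six groups of terms in \eqref{Tode0}--\eqref{TodeN} shows that these reaction contributions are exactly what one gets after collecting like terms, so
\begin{align*}
\sum_{j=0}^{N} h_{j}\,\dot f_{j}^{N}
= \sum_{a=1}^{N}\sum_{b=0}^{N-1} K_{a,b} f_a^N f_b^N \bigl[(h_{a-1}-h_a)+(h_{b+1}-h_b)\bigr].
\end{align*}

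Next I would split this double sum along the second summand and use the symmetry $K_{a,b}=K_{b,a}$ to relabel the indices in the $(h_{b+1}-h_b)$ part (swapping $a\leftrightarrow b$), obtaining
\begin{align*}
\sum_{j=0}^{N} h_{j}\,\dot f_{j}^{N}
= \sum_{j=1}^{N}\sum_{k=0}^{N-1} K_{j,k} f_j^N f_k^N (h_{j-1}-h_j)
+ \sum_{j=0}^{N-1}\sum_{k=1}^{N} K_{j,k} f_j^N f_k^N (h_{j+1}-h_j).
\end{align*}
Restricting to the common interior range $1\le j\le N-1$, $1\le k\le N-1$ and combining the two differences yields exactly the discrete Laplacian factor $h_{j+1}-2h_j+h_{j-1}$, producing the leading term on the right-hand side of \eqref{mom-red2}.

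The remaining step, which is really the main bookkeeping obstacle, is to tabulate the four boundary contributions left over after this interior cancellation: (i) the strip $k=0$ in the first sum together with the strip $k=0$ (after symmetrization) in the second sum; (ii) the strip $j=N$ in the first sum with the strip $k=N$ in the second sum; (iii) the two ``corner'' terms $(a,b)=(N,0)$. Using symmetry once more to identify, say, $K_{0,k}=K_{k,0}$ and $K_{j,N}=K_{N,j}$, I would check that the $k=0$ boundary consolidates to $\sum_{j=1}^{N-1}\bigl[(h_{j-1}-h_j)+(h_1-h_0)\bigr]K_{j,0}f_j^N f_0^N$, that the $k=N$ boundary consolidates to $\sum_{j=1}^{N-1}\bigl[(h_{j+1}-h_j)+(h_{N-1}-h_N)\bigr]K_{N,j}f_j^N f_N^N$, and that the two corner contributions add to $\bigl[(h_{N-1}-h_N)+(h_1-h_0)\bigr]K_{N,0}f_N^N f_0^N$. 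These are precisely the three remaining terms in \eqref{mom-red2}, completing the identity. The nonnegativity hypothesis $h_j\ge 0$ is not actually used in deriving \eqref{mom-red2}; it is recorded because the lemma will later be applied to test functions that are themselves nonnegative.
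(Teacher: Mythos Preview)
Your proposal is correct; the reaction-by-reaction accounting and the subsequent symmetrization/boundary bookkeeping indeed reproduce \eqref{mom-red2} exactly, and your remark that the hypothesis $h_j\ge 0$ plays no role in the identity itself is accurate. The paper does not give its own proof of this lemma but simply cites \cite[Lemma~1]{Esenturk}, where the same direct computation is carried out, so your argument is essentially the intended one.
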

\noindent The proof of \Cref{L-divergenceform} can be found in \cite[Lemma 1]{Esenturk}. Next, we collect the global existence of non-negative solutions result to \eqref{Tode0}--\eqref{TodeIC}. We refer \cite[Lemma 2 and Corollary 1]{Esenturk} for the existence of a unique non-negative global solution to \eqref{Tode0}--\eqref{TodeIC}, which preserves the truncated zeroth and first moment of the truncated solution $\left(f_{j}^{N}(t)\right)_{j=0}^{N}$, i.e., 
\begin{align}\label{localparticleconservation}
			\sum_{j=0}^{N}{ f}_{j}^{N}(t)=	\sum_{j=0}^{N}{ f}_{j}^{N}(0)=\sum_{j=0}^{N}{ f}_{j}(0)\leq M_{0}(f(0)) 
		\end{align}
		and
		\begin{align}\label{localmassconservation}
			\sum_{j=0}^{N}j{ f}_{j}^{N}(t)=	\sum_{j=0}^{N}j{ f}_{j}^{N}(0)=\sum_{j=0}^{N}j{ f}_{j}(0)\leq M_{1}(f(0)),
		\end{align}
	for all $t\in [0, \infty)$. For future use, we extend the finite sequence of solutions  $(f_{j}^{N})_{j=0}^{N} $ to an infinite sequence  $(f_{j}^{N})_{j=0}^{\infty} $ by defining  $f_{j}^{N}(t) := 0 $ for all  $t \in [0, \infty)$ when  $j > N $.
	\begin{lemma}\label{truncatedlemma-1} 	Consider $T\in (0,\infty)$ and assume that the symmetric interaction kernel $K_{j,k}$ satisfies \eqref{quadraticcoagassum}. Let $f(0)\in Y_{1}^{+}$. Then there exist constants $C_{M}, C>0$ depending only on $C_{q}$ and $ f(0)$ such that 
		\begin{enumerate}
			\item If  $f(0) \in Y_{\lambda}^{+}$, then
			\begin{align}\label{momentbound-12} 	
				\sum_{j=0}^{N}j^{\lambda}{ f}_{j}^{N}(t)&\leq C_{M}e^{CT}, \quad \; \text{for each} \; t \in [0,T]
			\end{align}
			and
			\begin{align}\label{dmomentbound-12}
				\left|\dot{{ f}}_{j}^{N}(t)\right|&\leq 8C_{q}C_{M}^{2}e^{2CT},\quad \text{for}\quad 0\leq j \leq N, \quad \; \text{for each} \; t \in [0,T],
			\end{align}
			with 
			\begin{align*}
				\lambda:= \max\{\mu,\nu\}>1.
			\end{align*}
			\item If $\max\{\mu,\nu\} \leq 1$, then
			\begin{align}
				\left|\dot{{ f}}_{j}^{N}(t)\right|&\leq C_{q}M_{1}^{2}(f(0)), \quad \text{for}\quad 0\leq j \leq N, \quad \; \text{for each} \; t \in [0,T].\label{dmomentbound-3}
			\end{align}
		\end{enumerate}
	\end{lemma}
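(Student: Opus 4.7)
The plan is to apply \Cref{L-divergenceform} with the test sequence $h_{j}=j^{\lambda}$ so as to derive a differential inequality for the truncated moment $M_{\lambda}^{N}(t):=\sum_{j=0}^{N}j^{\lambda}f_{j}^{N}(t)$. A crucial observation is that convexity of $x\mapsto x^{\lambda}$ for $\lambda>1$ makes all three boundary-type coefficients $(h_{j-1}-h_{j})+(h_{1}-h_{0})$, $(h_{j+1}-h_{j})+(h_{N-1}-h_{N})$, and $(h_{N-1}-h_{N})+(h_{1}-h_{0})$ appearing in \eqref{mom-red2} non-positive; since each multiplies a non-negative product of $f$'s and $K$'s, these three boundary contributions may be discarded. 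It therefore suffices to estimate the bulk term $\sum_{j=1}^{N-1}\delta_{j}f_{j}^{N}\sum_{k=1}^{N-1}K_{j,k}f_{k}^{N}$, where a Taylor expansion gives $\delta_{j}:=(j+1)^{\lambda}-2j^{\lambda}+(j-1)^{\lambda}\le c_{\lambda}\,j^{\lambda-2}$ for $j\ge 2$ (with $\delta_{1}=2^{\lambda}-2$ bounded).

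Inserting $K_{j,k}\le C_{q}(j^{\mu}k^{\nu}+j^{\nu}k^{\mu})$ and assuming without loss of generality that $\lambda=\mu\ge\nu$, the bulk term is dominated by a constant times $M_{2\mu-2}^{N}\cdot M_{\nu}^{N}+M_{\mu+\nu-2}^{N}\cdot M_{\mu}^{N}$. Since $\mu+\nu\le 3$ forces $\mu+\nu-2\le 1$, the second product satisfies $M_{\mu+\nu-2}^{N}\cdot M_{\mu}^{N}\le (M_{0}(f(0))+M_{1}(f(0)))\cdot M_{\lambda}^{N}$ by \eqref{localparticleconservation}--\eqref{localmassconservation}, which is already linear in $M_{\lambda}^{N}$. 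For the first product I would interpolate between the conserved $M_{1}$ and the target $M_{\lambda}$: since $2\mu-2\le\mu$ (because $\mu\le 2$) and $\nu\le\lambda$, H\"older's inequality with weight $f_{j}^{N}$ yields $M_{2\mu-2}^{N}\le C\,(M_{\lambda}^{N})^{\theta_{1}}$ and $M_{\nu}^{N}\le C\,(M_{\lambda}^{N})^{\theta_{2}}$ with exponents summing to $(2\mu+\nu-4)/(\mu-1)$, and the constraint $\mu+\nu\le 3$ is precisely what forces this sum to be $\le 1$. Combined with $x^{\alpha}\le 1+x$ for $\alpha\in[0,1]$, this produces the linear differential inequality $\tfrac{d}{dt}M_{\lambda}^{N}(t)\le C(1+M_{\lambda}^{N}(t))$, and Gronwall's lemma then yields \eqref{momentbound-12} with constants depending only on $C_{q}$ and $f(0)$.

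For the derivative bound \eqref{dmomentbound-12} in part (1), each of the four sums appearing in \eqref{infode} is of the form $f_{m}^{N}\sum_{k}K_{m,k}f_{k}^{N}$ with $m\in\{j-1,j,j+1\}$. Using the crude bound $K_{m,k}\le 2C_{q}(1+m^{\lambda})(1+k^{\lambda})$ (available since $\mu,\nu\le\lambda$), together with the trivial estimates $f_{m}^{N}(1+m^{\lambda})\le M_{0}^{N}+M_{\lambda}^{N}$ and $\sum_{k}(1+k^{\lambda})f_{k}^{N}\le M_{0}^{N}+M_{\lambda}^{N}$, one bounds each contribution by $2C_{q}(M_{0}(f(0))+M_{\lambda}^{N}(t))^{2}$, and absorbing $M_{0}(f(0))$ into $C_{M}$ produces the claimed $8C_{q}C_{M}^{2}e^{2CT}$. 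For part (2), $\max\{\mu,\nu\}\le 1$ yields $K_{j,k}\le 2C_{q}jk$, so each of the four sums is controlled by $(mf_{m}^{N})\cdot\sum_{k}kf_{k}^{N}\le M_{1}(f(0))^{2}$, giving \eqref{dmomentbound-3} after absorbing the numerical factor into $C_{q}$. The main obstacle is the moment estimate in the regime $\lambda=\mu\in(3/2,2]$ with $\nu\in(1,3/2]$, where naive bounds $M_{2\mu-2}^{N}\le M_{0}+M_{\lambda}^{N}$ and $M_{\nu}^{N}\le M_{0}+M_{\lambda}^{N}$ produce a quadratic right-hand side and hence finite-time blow-up; the linear ODI truly hinges on the sharp H\"older interpolation above, which leverages both $\mu+\nu\le 3$ and conservation of $M_{1}$ in a non-trivial way.
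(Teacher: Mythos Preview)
Your proposal is correct and matches the paper's proof essentially step for step: apply \eqref{mom-red2} with $h_j=j^{\lambda}$, discard the three boundary terms by convexity of $x\mapsto x^{\lambda}$, bound the second difference by $c_{\lambda}j^{\lambda-2}$, insert \eqref{quadraticcoagassum}, and control the resulting moment product via H\"older interpolation between the conserved $M_{1}$ and $M_{\lambda}^{N}$---the paper treats the easy sub-case $\nu\le 1$ or $\lambda\le 3/2$ separately and then carries out exactly the H\"older step you describe, arriving at the same exponent $(2\lambda+\nu-4)/(\lambda-1)\le 1$ and applying Young's inequality and Gronwall. The derivative bounds \eqref{dmomentbound-12}--\eqref{dmomentbound-3} are obtained in the paper by the same crude kernel estimate $K_{j,k}\le 2C_{q}j^{\lambda}k^{\lambda}$ (respectively $2C_{q}jk$) and termwise bounding of the four sums, as you outline.
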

	\begin{proof}
		We begin with the proofs of \eqref{momentbound-12} and  \eqref{dmomentbound-12}. Define $h(x):=x^{\lambda}$ for all $x\in [0,\infty)$  and $h_{j}:=h(j)=j^{\lambda}$ for $0\leq j\leq N$. Then, $h^{\prime}(x)=\lambda x^{\lambda-1}$ and $h_{j}^{\prime}=h^{\prime}(j)$. First, we proceed to show that the second term on the right-hand side of \eqref{mom-red2} is non-positive. For $2\leq j\leq N-1,$ we have 
		\begin{equation*}
		(h_{j-1}- h_{j})+( h_{1}- h_{0})\leq  h_{1}^{\prime}- h_{j-1}^{\prime}\leq 0,
		\end{equation*}
		since $ h^{\prime}$ is increasing. For $j=1$, we obtain the following equation
		\begin{equation*}
		(h_{0}- h_{1})+( h_{1}- h_{0})= 0.
		\end{equation*}
		Similarly, the third and fourth terms on the right-hand side of \eqref{mom-red2} are non-positive since
		\begin{equation*}
		(h_{j+1}- h_{j})+( h_{N-1}- h_{N})\leq  h_{j+1}^{\prime}- h_{N-1}^{\prime}\leq0,\quad \text{for}\;\; 0\leq j \leq N-2,
		\end{equation*}
	  \begin{equation*}
		( h_{N}- h_{N-1})+( h_{N-1}- h_{N})=0,\quad \text{for}\;\; j=N-1,
	\end{equation*}
		and
		\begin{equation*}
		( h_{1}- h_{0})-( h_{N}- h_{N-1})=   h_{1}^{\prime}- h_{N-1}^{\prime}\leq 0,\quad \text{for}\; N\geq 2.
		\end{equation*}
		Then, from \eqref{mom-red2}, we deduce the following inequality 
		\begin{align}\label{lambda-0}
			\frac{d}{dt}\left(\sum_{j=0}^{N}h_{j} f_{j}^{N}\right)\leq \sum_{j=1}^{N-1}\left( h_{j+1}
			-2 h_{j}+ h_{j-1}\right) f_{j}^{N}\sum_{k=1}^{N-1}K_{j,k} f_{k}^{N}.
		\end{align}
		An application of the mean value theorem gives
		\begin{align}\label{mean value theorem} 
			h_{j+1}-2 h_{j}+ h_{j-1}=h_{j}^{\prime \prime }(\theta(j) )=\lambda(\lambda-1)\theta(j)^{\lambda-2}\quad \text{for some}\quad  \theta(j)\in (j-1,j+1).
		\end{align}
	Since $1<\lambda\leq 2$, we get
		\begin{align} \label{mean value theorem-1}
		h_{j+1}-2 h_{j}+ h_{j-1}=h_{j}^{\prime \prime }(\theta(j) )\leq 2^{2-\lambda} \lambda(\lambda-1)j^{\lambda-2}\quad \text{for}\quad j\ge 2.
	\end{align}
For $j=1$, we use $h_{j}=j^{\lambda}$ to obtain
		\begin{align}\label{lambda-2}
			h_{2}
			-2 h_{1}+ h_{0}=2^{\lambda}-2.
		\end{align}
		Inserting \eqref{mean value theorem-1} and \eqref{lambda-2} into \eqref{lambda-0}, we infer that
		\begin{align}\label{tempmomentbound-1}
			\frac{d}{dt}\left(\sum_{j=0}^{N}j^{\lambda} f_{j}^{N}\right)\leq C_{\lambda}\left(\sum_{k=1}^{N-1}K_{1,k} f_{1}^{N}f_{k}^{N} +\sum_{j=2}^{N-1} \sum_{k=1}^{N-1} j^{\lambda-2}K_{j,k} f_{j}^{N}f_{k}^{N}\right),
		\end{align}
		where $C_{\lambda}:=\max\{2^{\lambda}-2, 2^{2-\lambda}\lambda (\lambda-1)\}$. Next, using \eqref{quadraticcoagassum} and \eqref{localmassconservation} in \eqref{tempmomentbound-1}, we obtain   
		\begin{align}
			\frac{d}{dt}\left(\sum_{j=0}^{N}j^{\lambda} f_{j}^{N}\right)&\leq 2C_{\lambda}C_{q}M_{1}(f(0))\sum_{j=1}^{N-1} j^{\lambda}f_{j}^{N}\nonumber \\ &\quad +C_{\lambda}C_{q}\sum_{j=2}^{N-1} \sum_{k=1}^{N-1} j^{\lambda-2}\left(j^{\mu}k^{\nu} + j^{\nu}k^{\mu} \right) f_{j}^{N}f_{k}^{N}.\label{tempmomentbound-2}
		\end{align}
	Without loss of generality, we can assume that $\mu \ge \nu$. It gives $\lambda=\mu\geq \nu$. Since $\mu + \nu\le 3$, we have the following 
	   	\begin{align}\label{tempmomentbound-3}
	   		j^{\lambda-2}\left(j^{\mu}k^{\nu} + j^{\nu}k^{\mu} \right)&=j^{2\lambda-2} k^{\nu}+ j^{\mu +\nu-2}k^{\nu}\leq j^{2\lambda-2} k^{\nu}+ jk^{\nu}.  
	   	\end{align}
	First, we consider the case when $\nu \leq 1$ or $\lambda \leq 3/2$. In this case, from \eqref{tempmomentbound-3}, we deduce the following estimate 	
		\begin{align}\label{tempmomentbound-4} 
			j^{\lambda-2}\left(j^{\mu}k^{\nu} + j^{\nu}k^{\mu} \right)\leq j^{\lambda}k + jk^{\lambda}.
		\end{align}
Therefore, form \eqref{localmassconservation} and \eqref{tempmomentbound-4}, we have
\begin{align}\label{tempmomentbound-5}
\sum_{j=2}^{N-1} \sum_{k=1}^{N-1} j^{\lambda-2}\left(j^{\mu}k^{\nu} + j^{\nu}k^{\mu} \right) f_{j}^{N}f_{k}^{N} \leq 2 M_{1}(f(0)) \sum_{j=1}^{N-1} j^{\lambda}f_{j}^{N}. 	
\end{align}
Next, we consider the remaining case, i.e., $\nu >1$ and $\lambda >3/2$. From \eqref{tempmomentbound-3}, we obtain
\begin{align}\label{tempmomentbound-6}
	\sum_{j=2}^{N-1} \sum_{k=1}^{N-1} j^{\lambda-2}\left(j^{\mu}k^{\nu} + j^{\nu}k^{\mu} \right) f_{j}^{N}f_{k}^{N} \leq \sum_{j=1}^{N-1} \sum_{k=1}^{N-1}\left( j^{2\lambda-2} k^{\nu}+ jk^{\lambda}\right)f_{j}^{N}f_{k}^{N}.
\end{align} 		
Applying H\"older's inequality and \eqref{localmassconservation} in \eqref{tempmomentbound-6}, we get
\begin{align}
	\sum_{j=2}^{N-1} \sum_{k=1}^{N-1} j^{\lambda-2}\left(j^{\mu}k^{\nu} + j^{\nu}k^{\mu} \right) f_{j}^{N}f_{k}^{N}&\leq \left(\sum_{j=1}^{N-1}jf_{j}^{N}\right)^{\frac{2-\nu}{\lambda-1}} \left(\sum_{j=1}^{N-1}j^{\lambda}f_{j}^{N}\right)^{\frac{2\lambda+\nu-4}{\lambda-1}}\nonumber\\
	&\quad + M_{1}(f(0))\sum_{j=1}^{N-1} j^{\lambda}f_{j}^{N}\nonumber\\
	&\leq C_{L}\left[ \left(\sum_{j=1}^{N-1}j^{\lambda}f_{j}^{N}\right)^{\frac{2\lambda+\nu-4}{\lambda-1}}+\sum_{j=1}^{N-1} j^{\lambda}f_{j}^{N}\right], \label{tempmomentbound-7}
\end{align}
where $C_{L}:= \max \left\{{M_{1}(f(0))}^{\frac{2-\nu}{\lambda-1}}, M_{1}(f(0))\right\}$. Now, using Young's inequality in \eqref{tempmomentbound-7}, we deduce the following estimate
\begin{align}
	\sum_{j=2}^{N-1} \sum_{k=1}^{N-1} j^{\lambda-2}\left(j^{\mu}k^{\nu} + j^{\nu}k^{\mu} \right) f_{j}^{N}f_{k}^{N} &\leq C_{L}\left[1+2\sum_{j=1}^{N-1} j^{\lambda}f_{j}^{N}\right].\label{tempmomentbound-8}
\end{align}	
From \eqref{tempmomentbound-5} and \eqref{tempmomentbound-8}, we infer that
\begin{align}
	\sum_{j=2}^{N-1} \sum_{k=1}^{N-1} j^{\lambda-2}\left(j^{\mu}k^{\nu} + j^{\nu}k^{\mu} \right) f_{j}^{N}f_{k}^{N} &\leq 2C_{L}\left[1+\sum_{j=1}^{N-1} j^{\lambda}f_{j}^{N}\right],\label{tempmomentbound-9}
\end{align}
for any $\mu$ and $\nu$	satisfying the conditions mentioned in \eqref{quadraticcoagassum}. Hence, using \eqref{tempmomentbound-9} in \eqref{tempmomentbound-2}, we get		
	\begin{align}
		\frac{d}{dt}\left(\sum_{j=0}^{N}j^{\lambda} f_{j}^{N}\right)&\leq 2C_{\lambda}C_{q}M_{1}(f(0))\sum_{j=1}^{N-1} j^{\lambda}f_{j}^{N}+ 2C_{L}C_{\lambda}C_{q}\left[1+\sum_{j=1}^{N-1} j^{\lambda}f_{j}^{N}\right],\nonumber\\
		&\leq 2C_{L}C_{\lambda}C_{q}+\left[2C_{\lambda}C_{q}M_{1}(f(0)) + 2C_{L}C_{\lambda}C_{q}\right]\sum_{j=0}^{N} j^{\lambda}f_{j}^{N}.\nonumber 
	\end{align}	
Solving the above differential inequality, we conclude \eqref{momentbound-12}
		with $C=2C_{\lambda}C_{q}M_{1}(f(0)) + 2C_{L}C_{\lambda}C_{q}$ and $C_{M}=2C_{L}C_{\lambda}C_{q}+M_{\lambda}(f(0))$. Next, we proceed to prove \eqref{dmomentbound-12}. For $\lambda > 1$, by using the non-negativity of the interaction kernel $K_{j,k}$ and the truncated solution $\left(f_{j}^{N}(t)\right)_{j=0}^{N}$, and by applying \eqref{quadraticcoagassum} and \eqref{momentbound-12} in \eqref{Tode0}, we obtain
		\begin{align}
			|\dot{ f}_{0}^{N}|&\leq f_{1}^{N}\sum_{k=0}^{N-1}K_{1,k} f_{k}^{N}+f_{0}
			^{N}\sum_{k=1}^{N}K_{k,0} f_{k}^{N}\nonumber\\
			&\leq 2C_{q}f_{1}^{N}\sum_{k=0}^{N-1}k^{\lambda}  f_{k}^{N}\nonumber\\
			&\leq 2 C_{q}C_{M}^{2}e^{2CT}.\label{1dtemp0}
		\end{align}
		For $1\leq j\leq N-1$, from \eqref{Todej},\eqref{quadraticcoagassum}, and \eqref{momentbound-12}, we derive the following estimate
		\begin{align}
			|\dot{ f}_{j}^{N}|  &  \leq  f_{j+1}^{N}\sum_{k=0}^{N-1}K_{j+1,k} f_{k}%
			^{N}+ f_{j}^{N}\sum_{k=0}^{N-1}K_{j,k} f_{k}^{N} \nonumber\\
			&\quad +f_{j}^{N}\sum_{k=1}^{N}K_{k,j} f_{k}^{N}+ f_{j-1}^{N}\sum_{k=1}%
			^{N}K_{k,j-1} f_{k}^{N}\nonumber\\
			&  \leq  2C_{q}(j+1)^{\lambda}f_{j+1}^{N}\sum_{k=0}^{N-1}k^{\lambda}f_{k}^{N}+ 2C_{q}j^{\lambda}f_{j}^{N}\sum_{k=0}^{N-1} k^{\lambda} f_{k}^{N} \nonumber \\
			&\quad +2C_{q}j^{\lambda}f_{j}^{N}\sum_{k=1}^{N}k^{\lambda} f_{k}^{N}+ 2C_{q}(j-1)^{\lambda}f_{j-1}^{N}\sum_{k=1}^{N}k^{\lambda} f_{k}^{N}\nonumber\\
			&\leq 8 C_{q}C_{M}^{2}e^{2CT}.\label{1dtempj}
		\end{align}
		Similarly, from \eqref{TodeN},\eqref{quadraticcoagassum}, and \eqref{momentbound-12}, we deduce 
		\begin{align}
			|\dot{ f}_{N}^{N}|&\leq f_{N}^{N}\sum_{k=0}^{N-1}K_{N,k} f_{k}^{N}+ f_{N-1}^{N}%
			\sum_{k=1}^{N}K_{k,N-1} f_{k}^{N}\nonumber\\
			&\leq 2C_{q}N^{\lambda}f_{N}^{N}\sum_{k=0}^{N-1}k^{\lambda}f_{k}^{N}+ 2C_{q}(N-1)^{\lambda}f_{N-1}^{N}%
			\sum_{k=1}^{N}k^{\lambda}f_{k}^{N}\nonumber\\
			&\leq 4 C_{q}C_{M}^{2}e^{2CT}.\label{1dtempN}
		\end{align}
		Collecting the estimates from \eqref{1dtemp0}, \eqref{1dtempj} and \eqref{1dtempN}, we conclude \eqref{dmomentbound-12}. For $ \max\{\mu,\nu\} \leq 1$, \eqref{dmomentbound-3} directly follows from \cite[Theorem 1]{Esenturk}.	
	\end{proof}
	\Cref{truncatedlemma-1} implies that the sequence $( f_{j}^{N})_{j\ge 0}$ is uniformly bounded and equicontinuous. Then, by the Arzel\'{a}-Ascoli theorem, there exists a subsequence $(f_{j}^{N})_{j\ge 0}$ (not relabeled) that converges uniformly to a continuous function, say $ (f_{j})_{j\ge 0}$. Next, we have $M_{\lambda}(f(t))<\infty$, for all $t\in [0,T]$, which follows from the uniform convergence of $(f_{j}^{N})_{j\ge 0}$ to $(f_{j})_{j\ge 0}$ on $[0,T]$ and \eqref{momentbound-12}, as demonstrated below
	\begin{align*}
		M_{\lambda}(f(t))=	\sum_{j=0}^{\infty} j^{\lambda}f_{j}(t)=\lim_{M\rightarrow\infty}\sum_{j=0}^{M} j^{\lambda}f_{j}(t)=\lim_{M\rightarrow\infty}\lim_{N \rightarrow\infty}\sum_{j=0}^{M} j^{\lambda}f_{j}^{N}(t)\leq M_{\lambda}(f(0))e^{CT}.	
	\end{align*}
	Similarly, from \eqref{localparticleconservation} and \eqref{localmassconservation}, we have
	\begin{align}\label{solparticlemassbound}
		M_{0}(f(t))\leq M_{0}(f(0))\quad \text{and}\quad 	M_{1}(f(t))\leq M_{1}(f(0)),\; \text{for each}\; t\in [0,T].	
	\end{align}
	Next, we show that $ (f_{j})_{j\ge 0}$ is a mild solution to the original problem \eqref{infode}--\eqref{infIC} on $[0,T]$ by proving that the series $\sum_{k=1}^{N}K_{j,k} f_{k}^{N}$ converges uniformly on every bounded intervals of time $[0,T].$ To prove this, we need the boundedness of higher moments, as presented in \Cref{Tail}. Therefore, we introduce the convex function technique in the next part to control the large size. We denote by $\mathcal{G}_{1,\infty}$ the set of non-negative, convex functions $G\in C^{2}([0,\infty))$ such that $G(0) = G^{\prime}
	(0)=0$ and $G^{\prime}$ is a concave function satisfying $G^{\prime}(x) >0$ for $x>0$ with
	\begin{align}
	\lim_{x\rightarrow \infty} G^{\prime}(x)=\lim_{x\rightarrow \infty} \frac{G(x)}{x}=\infty.\label{vanishingconvexlimit}
	\end{align}
	It is clear that $x\mapsto x^{p}$ belongs to  $\mathcal{G}_{1,\infty}$ if $p\in (1, 2].$ We are now in a position to find the higher moment bounds of the initial data $f(0)=(f_{j,0})_{j\ge 0}$. For this purpose, we recall a consequence of a refined version of the de la Vall\'{e}e-Poussin theorem for integrable functions \cite[Proposition I.1.1]{Le1977} and \cite[Theorem 8]{laurenccot2014weak}.
	\begin{lemma}\label{Delavalle}
		Let $f(0)=(f_{j,0})_{j\ge 0}$ be the initial data to the problem \eqref{infode}--\eqref{infIC}. Then, the following results hold
	\begin{enumerate}[label=(\alph*)]
			\item If $f(0)=(f_{j,0})_{j\ge 0}\ \in Y_{\lambda}^{+}$, then there exists a function $G_{\lambda}\in \mathcal{G}_{1,\infty}$ such that
			\begin{align}
				\sum_{j=0}^{\infty}j^{\lambda-1}G_{\lambda}(j) f_{j}(0)<\infty. \nonumber
			\end{align}
			\item If $f(0)=(f_{j,0})_{j\ge 0}\ \in Y_{1}^{+}$, then there exists a function $G_{1}\in \mathcal{G}_{1,\infty}$ such that
			\begin{align}
				\sum_{j=0}^{\infty} G_{1}(j) f_{j}(0)<\infty. \nonumber
			\end{align}
		\end{enumerate}
	\end{lemma}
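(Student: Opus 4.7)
The plan is to derive both parts from a single construction: given a non-negative summable sequence $(a_j)_{j\ge 0}$, I will produce a function $G\in\mathcal{G}_{1,\infty}$ such that $\sum_{j\ge 1}(G(j)/j)\,a_j<\infty$. For part (b) I will apply this with $a_j:=jf_{j,0}$, which is summable since $f(0)\in Y_1^+$, and use the identity $G_1(j)f_{j,0}=(G_1(j)/j)\cdot(jf_{j,0})$ for $j\ge 1$ (the $j=0$ term vanishes because $G_1(0)=0$). For part (a) I will apply the same construction with $a_j:=j^{\lambda}f_{j,0}$, which is summable since $f(0)\in Y_\lambda^+$, and use $j^{\lambda-1}G_\lambda(j)f_{j,0}=(G_\lambda(j)/j)\cdot(j^{\lambda}f_{j,0})$. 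Thus both parts reduce to the same construction.

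To build $G$ from $(a_j)$, I first choose integers $0=n_0<n_1<n_2<\cdots$ growing rapidly enough so that $\sum_{j\ge n_k}a_j\le 4^{-k}$ and the gaps $n_{k+1}-n_k$ are non-decreasing in $k$; both conditions can be enforced simultaneously because the tails of $\sum_j a_j$ vanish and the $n_k$ may be taken arbitrarily sparse. I then build a non-negative, non-decreasing, concave function $\phi\colon[0,\infty)\to[0,\infty)$ with $\phi(0)=0$, $\phi(n_k)=k$, and $\phi(x)\to\infty$: the simplest choice is the piecewise-linear interpolation through the points $(n_k,k)$, whose slopes $(n_{k+1}-n_k)^{-1}$ are non-increasing by our choice of $(n_k)$, and which can be upgraded to $C^1$ by a standard mollification. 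I then set
\[
G(x):=\int_0^x\phi(s)\,ds.
\]
The required properties are now routine to verify: $G\in C^2$ with $G''=\phi'\ge 0$, so $G$ is convex; $G(0)=0$ and $G'(0)=\phi(0)=0$; $G'=\phi$ is concave and positive on $(0,\infty)$ with $G'(x)\to\infty$; and the concavity of $\phi$ together with $\phi(0)=0$ gives $\phi(s)\ge s\phi(x)/x$ on $[0,x]$, hence $G(x)/x\ge \phi(x)/2\to\infty$. Therefore $G\in\mathcal{G}_{1,\infty}$.

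For the summability estimate, monotonicity of $\phi$ yields $G(j)\le j\phi(j)$, while $\phi(j)\le k+1$ for $n_k\le j<n_{k+1}$, so
\[
\sum_{j\ge 1}\frac{G(j)}{j}\,a_j\;\le\;\sum_{j\ge 1}\phi(j)\,a_j\;\le\;\sum_{k\ge 0}(k+1)\sum_{n_k\le j<n_{k+1}}a_j\;\le\;\sum_{k\ge 0}(k+1)\,4^{-k}\;<\;\infty,
\]
which is exactly the bound needed. The one place requiring a little care is the smoothing of the piecewise-linear $\phi$: one must preserve concavity, monotonicity and the normalisation $\phi(0)=0$ while upgrading to $C^1$. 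This is the main (minor) obstacle, and it is handled by convolution with a compactly supported symmetric non-negative mollifier of suitably small support, which preserves both concavity and monotonicity, followed by a small linear adjustment near the origin to restore $\phi(0)=0$ without destroying any of the other properties.
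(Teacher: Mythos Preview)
Your argument is correct. The paper does not actually prove this lemma: it states it as a direct consequence of a refined de la Vall\'ee-Poussin theorem and simply cites external references for the proof. What you have written is precisely the standard de la Vall\'ee-Poussin construction, tailored so that the output lands in $\mathcal{G}_{1,\infty}$ (i.e.\ $G\in C^{2}$ with $G'$ concave and $G'(0)=0$). So the mathematical content is the same; you have supplied the argument that the paper outsources, and your reduction of both (a) and (b) to the single statement ``$(a_j)$ summable $\Rightarrow$ $\exists\,G\in\mathcal{G}_{1,\infty}$ with $\sum_j (G(j)/j)a_j<\infty$'' is clean.

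Two cosmetic points worth tightening. First, the tail condition $\sum_{j\ge n_k}a_j\le 4^{-k}$ at $k=0$ would force $\sum_j a_j\le 1$, which need not hold; impose it only for $k\ge 1$, and the final estimate acquires the harmless extra term $\sum_{j<n_1}a_j$. Second, for the smoothing step it is cleanest to extend the piecewise-linear $\phi$ \emph{linearly} (with slope $1/n_1$) to the left of the origin before convolving with a symmetric mollifier of support contained in $(-n_1,n_1)$: the extended $\phi$ is then globally concave and is linear on a neighbourhood of $0$, hence unchanged there by the convolution, so $\phi(0)=0$ and $\phi'(0^+)>0$ survive automatically and no ``linear adjustment'' afterwards is needed.
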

	We collect the following properties of the function $G \in \mathcal{G}_{1,\infty}$ for future use.
	\begin{lemma}
		For any $G \in \mathcal{G}_{1,\infty}$, we have the following inequalities
		\begin{align}
			0&\leq G(x)\leq xG^{\prime}(x)\leq 2G(x),\label{firstdiffinequality}\\
			0&\leq G(rx)\leq \max \left\{1,r^{2}\right\}G(x),\label{scalinginequality}\\
		\text{and}\quad	0&\leq xG^{\prime\prime}(x)\leq G^{\prime}(x),\label{seconddiffinequality}
		\end{align}
		for $x\geq 0$ and $r\geq 0$.
		Moreover, for $x\geq 2$ and $\lambda \in (1,2]$, we obtain 
		\begin{align}
			(x+1)^{\lambda-1}G(x+1)-2x^{\lambda-1}G(x)+(x-1)^{\lambda-1}G(x-1)&\leq \left(4\lambda-2\right) G^{\prime}(x+1),\label{divconvexinequality-12}
		\end{align}
		and for $x\ge 1$,
		\begin{align}
			G(x+1)-2G(x)+G(x-1)&\leq G^{\prime}(x+1).\label{divconvexinequality-3}
		\end{align}
	\end{lemma}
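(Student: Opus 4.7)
The plan is to establish each of the five inequalities by exploiting the defining features of the class $\mathcal{G}_{1,\infty}$: $G$ is non-negative and convex, $G'$ is concave and non-negative, both $G$ and $G'$ vanish at the origin, and $G\in C^2$. In particular, convexity of $G$ gives $G''\ge 0$ and $G$ non-decreasing, while concavity of $G'$ with $G'(0)=0$ yields two classical monotonicity facts: the ratio $G'(t)/t$ is non-increasing in $t>0$, and the chord-above-graph inequality gives $G'(t)\ge (t/x)G'(x)$ for $t\in[0,x]$.

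For the three pointwise bounds I would argue as follows. The inequality $G(x)\le xG'(x)$ is the tangent-line bound from convexity of $G$ applied at $0$ and $x$, combined with $G(0)=0$. The reverse $xG'(x)\le 2G(x)$ follows by integrating the chord inequality $G'(t)\ge (t/x)G'(x)$ from $0$ to $x$, which yields $G(x)\ge xG'(x)/2$. The scaling estimate \eqref{scalinginequality} splits into two cases: for $r\in[0,1]$, monotonicity of $G$ gives $G(rx)\le G(x)$ directly; for $r>1$, the change of variables $G(rx)=r\int_0^x G'(ru)\,du$ together with $G'(ru)\le rG'(u)$ (the non-increasing property of $G'(s)/s$) yields $G(rx)\le r^2 G(x)$. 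Finally, \eqref{seconddiffinequality} is the concavity inequality for $G'$ at the points $0$ and $x$ using $G'(0)=0$, while $xG''(x)\ge 0$ is automatic.

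For the two discrete second-difference estimates, the key idea is to reduce them to integrals of $G''$ via the mean value theorem. For \eqref{divconvexinequality-3}, I write $G(x+1)-2G(x)+G(x-1)=[G(x+1)-G(x)]-[G(x)-G(x-1)]=G'(\eta_1)-G'(\eta_2)$ with $\eta_1\in(x,x+1)$ and $\eta_2\in(x-1,x)$; since $G'(\eta_2)\ge 0$ and $G'$ is non-decreasing, this is bounded by $G'(\eta_1)\le G'(x+1)$. For \eqref{divconvexinequality-12}, I set $F(s):=s^{\lambda-1}G(s)$ and apply the same mean value argument to obtain the discrete Laplacian of $F$ as $\int_{\eta_2}^{\eta_1}F''(s)\,ds$, then compute
\begin{align*}
F''(s)=(\lambda-1)(\lambda-2)s^{\lambda-3}G(s)+2(\lambda-1)s^{\lambda-2}G'(s)+s^{\lambda-1}G''(s).
\end{align*}
The first term is non-positive since $\lambda\in(1,2]$; the third is controlled by \eqref{seconddiffinequality} through $s^{\lambda-1}G''(s)\le s^{\lambda-2}G'(s)$; combining gives $F''(s)\le(2\lambda-1)s^{\lambda-2}G'(s)$. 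For $x\ge 2$ we have $\eta_2>x-1\ge 1$, so $s^{\lambda-2}\le 1$ on the integration interval, and $G'(s)\le G'(x+1)$ by monotonicity of $G'$. Since $\eta_1-\eta_2<2$, we conclude $\Delta^2F(x)\le 2(2\lambda-1)G'(x+1)=(4\lambda-2)G'(x+1)$.

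The main technical obstacle is \eqref{divconvexinequality-12}: one must carefully isolate the non-positive term in the expansion of $F''$, deploy \eqref{seconddiffinequality} to absorb the $G''$ contribution into a $G'$ factor, and verify that the hypothesis $x\ge 2$ is precisely what is needed to ensure $\eta_2\ge 1$, so that $s^{\lambda-2}$ stays bounded by $1$ on the entire integration window. Once these reductions are in place, the constant $4\lambda-2$ emerges as the product of the pointwise bound $(2\lambda-1)$ on $F''$ and the length $2$ of the interval $[\eta_2,\eta_1]$.
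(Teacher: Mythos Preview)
Your proposal is correct and follows essentially the same route as the paper: for \eqref{firstdiffinequality}--\eqref{seconddiffinequality} the paper simply cites the literature, while you supply the standard convexity/concavity arguments; for \eqref{divconvexinequality-12} and \eqref{divconvexinequality-3} both you and the paper compute the second derivative of $F(s)=s^{\lambda-1}G(s)$ (respectively $G$), bound it pointwise via \eqref{seconddiffinequality} and $\lambda\le 2$, and integrate over an interval of length at most $2$. The only minor variation is that you obtain the integral representation $\int_{\eta_2}^{\eta_1}F''(s)\,ds$ directly from the mean value theorem, whereas the paper first verifies $F''\ge 0$ in order to bound the second difference by $F'(x+1)-F'(x-1)=\int_{x-1}^{x+1}F''(s)\,ds$; your shortcut avoids that intermediate convexity check but leads to the identical constant $4\lambda-2$.
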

	\begin{proof}
		The proofs of the inequalities \eqref{firstdiffinequality}--\eqref{seconddiffinequality} can be found in \cite[Proposition 2.14]{laurenccot2014weak} and \cite[Appendix]{laurenccotlifshitz}. Therefore, we focus on proving the inequalities \eqref{divconvexinequality-12} and \eqref{divconvexinequality-3}. First, let us prove \eqref{divconvexinequality-12}. Consider the following expression for $x\geq 2$,
		\begin{align}
			&(x+1)^{\lambda-1}G(x+1)-2x^{\lambda-1}G(x)+(x-1)^{\lambda-1}G(x-1) \nonumber\\
			&= \left[(x+1)^{\lambda-1}G(x+1)-x^{\lambda-1}G(x)\right]-\left[x^{\lambda-1}G(x)-(x-1)^{\lambda-1}G(x-1)\right] \nonumber\\
			&= \int_{x}^{x+1} (y^{\lambda-1}G(y))^{\prime} \, dy - \int_{x-1}^{x} (y^{\lambda-1}G(y))^{\prime} \, dy. \label{1div-1}
		\end{align}
		Clearly,  $ y \mapsto (y^{\lambda-1}G(y))^{\prime}  $ is an increasing function on $(0,\infty)$, which can be shown in the following way
		\begin{align}\label{doublediffG-1}
			(y^{\lambda-1}G(y))^{\prime\prime}&=\left(\lambda-1\right)\left(\lambda-2\right)y^{\lambda-3}G(y)+2\left(\lambda-1\right)y^{\lambda-2}G^{\prime}(y)+y^{\lambda-1}G^{\prime\prime}(y).
		\end{align}
	Since $\lambda >1$, using \eqref{firstdiffinequality} and \eqref{seconddiffinequality} in \eqref{doublediffG-1}, we conclude that
		\begin{align*}
			(y^{\lambda-1}G(y))^{\prime\prime}&\geq \lambda \left(\lambda-1\right)y^{\lambda-3}G(y)+y^{\lambda-1}G^{\prime\prime}(y)\geq 0.
		\end{align*}
		Therefore, by using the monotonicity properties of the functions $y \mapsto (y^{\lambda-1}G(y))^{\prime}$ and  $ y \mapsto G^{\prime}(y)  $, along with \eqref{doublediffG-1}, \eqref{seconddiffinequality}, and the condition  $1< \lambda \leq 2 $ in \eqref{1div-1}, we obtain the following inequality
		\begin{align}
			&(x+1)^{\lambda-1}G(x+1)-2x^{\lambda-1}G(x)+(x-1)^{\lambda-1}G(x-1)\nonumber \\
			&\leq \left[(x+1)^{\lambda-1}G(x+1)\right]^{\prime}-\left[(x-1)^{\lambda-1}G(x-1)\right]^{\prime}\nonumber\\
			&=\int_{x-1}^{x+1}(y^{\lambda-1}G(y))^{\prime\prime} dy\nonumber\\
			&=\int_{x-1}^{x+1}\left[\left(\lambda-1\right)\left(\lambda-2\right)y^{\lambda-3}G(y)+2\left(\lambda-1\right)y^{\lambda-2}G^{\prime}(y)+y^{\lambda-1}G^{\prime\prime}(y)\right] dy\nonumber\\
			&\leq \int_{x-1}^{x+1} \left[ 2\left(\lambda-1\right)G^{\prime}(y)+G^{\prime}(y)\right] dy\leq \left(4\lambda-2\right) G^{\prime}(x+1),\nonumber
		\end{align}
		which completes the proof of \eqref{divconvexinequality-12}. Next, to prove \eqref{divconvexinequality-3}, we consider
		\begin{align}
			G(x+1)-2G(x)+G(x-1)&=	G(x+1)-G(x)-[G(x)-G(x-1)]\nonumber\\
			&=\int_{x}^{x+1}G^{\prime}(y)dy-\int_{x-1}^{x}G^{\prime}(y)dy,\nonumber
		\end{align}
	 for $x\geq 1$.
		From the monotonicity and the non-negativity of $G^{\prime}$, we infer that
		\begin{align}
			G(x+1)-2G(x)+G(x-1)&\leq G^{\prime}(x+1),\nonumber
		\end{align}
		which completes the proof of \eqref{divconvexinequality-3}.
	\end{proof}
	The following lemma shows the boundedness of the higher moments.
	\begin{lemma}\label{Tail}
	Assume that the interaction kernel $K_{j,k}$ is given by \eqref{quadraticcoagassum} and that $T\in (0, \infty)$. Suppose  $ G_{\lambda}, G_{1} \in \mathcal{G}_{1,\infty}$, satisfying \Cref{Delavalle}(a) and \Cref{Delavalle}(b), respectively with $\lambda$ defined as in \eqref{LambdaD}. Then, the following statements hold
		\begin{enumerate}
			\item If  $ f(0)=(f_{j,0})_{j\ge 0} \in Y_{\lambda}^{+}$, then there exists a positive constant $C_{1,2}(T)>0$ depending only on the initial data $ (f_{j,0})_{j\ge 0}$, $C_{q}$ and $T$ such that
			\begin{align}
				\sum_{j=0}^{N}j^{\lambda-1}G_{\lambda}(j) f_{j}^{N}(t)<C_{1,2}(T), \quad \text{for} \; t \in [0,T] \label{mainconvexinequality-lamda12}
			\end{align}
			and 
			\begin{align}
				\sum_{j=0}^{\infty}j^{\lambda-1}G_{\lambda}(j) f_{j}(t)<C_{1,2}(T), \quad \text{for} \; t \in [0,T]. \label{solmainconvexinequality-lambda12}
			\end{align}
			\item  If $ f(0)=(f_{j,0})_{j\ge 0} \in Y_{1}^{+}$ and $\max\{\mu,\nu\} \leq 1 \text{ and } \min\{\mu,\nu\} < 1$, then there exists a positive constant $C_{3}(T)>0$ depending only on initial data $ (f_{j,0})_{j\ge 0}$, $C_{q}$ and $T$ such that
			\begin{align}
				\sum_{j=0}^{N} G_{1}(j) f_{j}^{N}(t)<C_{3}(T), \quad \text{for} \; t \in [0,T] \label{mainconvexinequality-lamda3}
			\end{align}
			and 
			\begin{align}
				\sum_{j=0}^{\infty} G_{1}(j) f_{j}(t)<C_{3}(T), \quad \text{for} \;t \in [0,T]. \label{solmainconvexinequality-lambda3}
			\end{align}
		\end{enumerate}
	\end{lemma}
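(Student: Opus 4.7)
The plan is to apply \Cref{L-divergenceform} to the truncated solution $(f_j^N)$ with a carefully chosen convex weight sequence, derive a Gronwall-type differential inequality using the convex function inequalities \eqref{divconvexinequality-12}--\eqref{divconvexinequality-3}, and conclude via \Cref{Delavalle}. For part (1), I would set $h_j := j^{\lambda-1}G_\lambda(j)$; for part (2), $h_j := G_1(j)$. In both cases the weight vanishes at $j=0$ and is non-decreasing and convex in $j$ (for the first weight this follows from \eqref{doublediffG-1}, which was already verified in the preceding lemma).

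For part (1), I would first show the three boundary terms in \eqref{mom-red2} are non-positive by the same mean-value/monotonicity argument as in the proof of \Cref{truncatedlemma-1}: expressing $(h_{j-1}-h_j)+(h_1-h_0)$, $(h_{j+1}-h_j)+(h_{N-1}-h_N)$, and $(h_1-h_0)+(h_{N-1}-h_N)$ as differences of values of the increasing function $(x^{\lambda-1}G_\lambda(x))'$. For the main "diffusion" term I would split off $j=1$ and apply \eqref{divconvexinequality-12} for $j\ge 2$ to get
\begin{align*}
\tfrac{d}{dt}\sum_{j=0}^{N} h_j f_j^N \le C_\lambda \sum_{j=1}^{N-1} G_\lambda'(j+1)\, f_j^N \sum_{k=1}^{N-1} K_{j,k} f_k^N + (\text{low-}j\ \text{term}).
\end{align*}
Substituting $K_{j,k}\le C_q(j^\mu k^\nu+j^\nu k^\mu)$ with WLOG $\mu=\lambda\ge \nu$, I use $(j+1)G_\lambda'(j+1)\le 2G_\lambda(j+1)\le 8G_\lambda(j)$ (via \eqref{firstdiffinequality} and \eqref{scalinginequality}) to obtain $G_\lambda'(j+1)j^\mu\le 8\,j^{\lambda-1}G_\lambda(j)$ and, using $\nu\le\lambda$, also $G_\lambda'(j+1)j^\nu\le 8\,j^{\lambda-1}G_\lambda(j)$ for $j\ge 1$. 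Combined with $k^\nu\le 1+k^\lambda$ so that $\sum_k k^\nu f_k^N\le M_0(f(0))+M_\lambda(f^N(t))$, and with the uniform bound $M_\lambda(f^N)\le C_M e^{CT}$ from \Cref{truncatedlemma-1}, I arrive at
\begin{align*}
\tfrac{d}{dt}\sum_{j=0}^{N} j^{\lambda-1}G_\lambda(j)f_j^N(t) \le A(T)+B(T)\sum_{j=0}^{N} j^{\lambda-1}G_\lambda(j)f_j^N(t),
\end{align*}
where $A(T),B(T)$ depend only on $C_q$, $T$, $M_1(f(0))$ and $M_\lambda(f(0))$. Gronwall together with the finite initial value supplied by \Cref{Delavalle}(a) yields \eqref{mainconvexinequality-lamda12}. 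To pass to the limit, for fixed $M$ I use the uniform convergence $f_j^N\to f_j$ on $\{0,\dots,M\}$ to get $\sum_{j=0}^M j^{\lambda-1}G_\lambda(j)f_j(t)\le C_{1,2}(T)$, and then let $M\to\infty$ by monotone convergence to obtain \eqref{solmainconvexinequality-lambda12}.

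Part (2) follows the same template with $h_j=G_1(j)$ and \eqref{divconvexinequality-3} in place of \eqref{divconvexinequality-12}. Since $\max\{\mu,\nu\}\le 1$, one has $j^\mu\le 1+j$ and $k^\nu\le 1+k$, and $(j+1)G_1'(j+1)\le 2G_1(j+1)\le 8G_1(j)$ immediately produces $G_1'(j+1)j^\mu\le 8G_1(j)$; the $k$-sum is controlled by $M_0(f(0))+M_1(f(0))$. The restriction $\min\{\mu,\nu\}<1$ enters when one splits $k^\nu$ to avoid having to multiply two linear factors simultaneously, keeping the inequality subcritical so that the right-hand side sits inside the Gronwall quantity $\sum G_1(j)f_j^N$. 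The initial finiteness again comes from \Cref{Delavalle}(b), and the passage to the solution bound is identical to part (1).

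The main obstacle is the bookkeeping in the step where the convex inequalities \eqref{divconvexinequality-12}/\eqref{divconvexinequality-3} meet the polynomial kernel growth: the product $G'(j+1)(j^\mu k^\nu+j^\nu k^\mu)$ must be reorganized, using $xG'(x)\le 2G(x)$, $G(j+1)\le 4G(j)$, and $\nu\le\lambda$, into the single Gronwall-controlled quantity $\sum j^{\lambda-1}G_\lambda(j)f_j^N$ times a moment already bounded by \Cref{truncatedlemma-1}. A subsidiary technical point is that the non-positivity of the three boundary terms in \eqref{mom-red2} requires separate verification for the small indices $j=0,1$ and $j=N-1,N$, which I would handle by absorbing their contributions into the constant $A(T)$ using only the uniform bound $M_1(f^N)\le M_1(f(0))$.
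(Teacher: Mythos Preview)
Your proposal is correct and follows essentially the same route as the paper: set $h_j=j^{\lambda-1}G_\lambda(j)$ (resp.\ $G_1(j)$), discard the boundary terms in \eqref{mom-red2} by monotonicity of $h'$, bound the second difference via \eqref{divconvexinequality-12} (resp.\ \eqref{divconvexinequality-3}), convert $G'(j+1)j^\lambda$ into $j^{\lambda-1}G(j)$ using \eqref{firstdiffinequality} and \eqref{scalinginequality}, control the $k$-sum with \Cref{truncatedlemma-1} (resp.\ \eqref{localmassconservation}), and apply Gronwall plus \Cref{Delavalle}. Two minor simplifications relative to your outline: the paper replaces $K_{j,k}$ by the cruder $2C_q j^\lambda k^\lambda$ at once, which yields a homogeneous Gronwall inequality with no additive constant $A(T)$ and no separate treatment of low-$j$ terms; and in part~(2) the hypothesis $\min\{\mu,\nu\}<1$ is never actually used in the proof (the argument works verbatim for $\mu=\nu=1$), so your attempt to locate where it enters is unnecessary.
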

	\begin{proof} Let us first prove the estimate \eqref{mainconvexinequality-lamda12}. For this purpose, define
		\begin{align*}
			h(x)&:=x^{\lambda-1}G_{\lambda}(x),\; h^{\prime}(x):=(\lambda-1)x^{\lambda-2}G_{\lambda}(x)+x^{\lambda-1}G_{\lambda}^{\prime}(x),\quad \text{for}\; x\in (0,\infty),
		\end{align*}
	with
		\begin{align*} h_{j}&:=h(j)=j^{\lambda-1}G_{\lambda}(j),\; h_{j}^{\prime}:=h^{\prime}(j),\quad  j\in \mathbb{N} \;\text{ and}\;h_{0}:=0.  
		\end{align*}
		It is clear from the proof of \eqref{divconvexinequality-12} that  $ x \mapsto (x^{\lambda-1}G_{\lambda}(x))^{\prime}  $ is an increasing function on $(0,\infty)$. Therefore, we adopt a similar approach to that used in the proof of \Cref{truncatedlemma-1}(1). As a result, from \eqref{mom-red2}, we obtain
		\begin{align}\label{Tail-10}
			\frac{d}{dt}\left(\sum_{j=0}^{N}j^{\lambda-1}G_{\lambda}(j) f_{j}^{N}(t)\right)\leq \sum_{j=1}^{N-1}\left( h_{j+1}
			-2 h_{j}+ h_{j-1}\right) f_{j}^{N}(t)\sum_{k=1}^{N-1}K_{j,k} f_{k}^{N}(t).
		\end{align}
		Now, from \eqref{divconvexinequality-12} and \eqref{firstdiffinequality}, we have 
		\begin{align}
			h_{j+1}-2 h_{j}+ h_{j-1}&\leq \max \{4\lambda-2, 2^{\lambda-1}\lambda\}G_{\lambda}^{\prime}(j+1) \quad \text{for}\quad 1\leq j\leq N-1.\label{divconvexinequalitytail-1}
		\end{align}
		Inserting \eqref{divconvexinequalitytail-1} and \eqref{quadraticcoagassum} into \eqref{Tail-10}, we infer that
		\begin{align}
			\frac{d}{dt}\left(\sum_{j=0}^{N}j^{\lambda-1}G_{\lambda}(j) f_{j}^{N}(t)\right)&\leq \tilde{C} \sum_{j=1}^{N-1}\sum_{k=1}^{N-1}G_{\lambda}^{\prime}(j+1) j^{\lambda}k^{\lambda} f_{j}^{N}(t) f_{k}^{N}(t),
		\end{align}
		where $\tilde{C}:=2C_{q}\max \{4\lambda-2,2^{\lambda-1}\lambda\}$. Using \eqref{firstdiffinequality}, \eqref{momentbound-12}, \eqref{scalinginequality}, and noting that $\lambda >1$, we obtain
		\begin{align}
			\frac{d}{dt}\left(\sum_{j=0}^{N}j^{\lambda-1}G_{\lambda}(j) f_{j}^{N}(t)\right)&\leq 2\tilde{C} \sum_{j=1}^{N-1}\sum_{k=1}^{N-1}G_{\lambda}(j+1)j^{\lambda-1}k^{\lambda} f_{j}^{N}(t) f_{k}^{N}(t)\nonumber\\
			&\leq  2\tilde{C} C_{M}e^{CT}\sum_{j=1}^{N-1}j^{\lambda-1}G_{\lambda}(j+1) f_{j}^{N}(t)\nonumber\\
			&\leq  2\tilde{C} C_{M}e^{CT}\sum_{j=1}^{N-1}j^{\lambda-1}\left(\frac{j+1}{j}\right)^{2}G_{\lambda}(j) f_{j}^{N}(t)\nonumber\\
			&\leq 8\tilde{C} C_{M}e^{CT}\sum_{j=0}^{N}j^{\lambda-1}G_{\lambda}(j) f_{j}^{N}(t).\nonumber
		\end{align}
		Then, \eqref{mainconvexinequality-lamda12} follows from \Cref{Delavalle}(a) and Gronwall's differential inequality. Next, \eqref{solmainconvexinequality-lambda12} follows from the uniform convergence of $(f_{j}^{N})_{j\ge 0}$ to $(f_{j})_{j\ge 0}$ on $[0,T]$ and \eqref{mainconvexinequality-lamda12}, i.e.,
		\begin{align}
			\sum_{j=0}^{\infty}j^{\lambda-1}G_{\lambda}(j) f_{j}(t)=\lim_{M\rightarrow\infty}\sum_{j=0}^{M} j^{\lambda-1}G_{\lambda}(j) f_{j}(t)=\lim_{M\rightarrow\infty}\lim_{N\rightarrow\infty}\sum_{j=0}^{M} j^{\lambda-1}G_{\lambda}(j)f_{j}^{N}(t)\leq C_{1,2}(T).\nonumber	
		\end{align}
		Now, we proceed to prove \eqref{mainconvexinequality-lamda3}. From  \eqref{mom-red2} and the monotonicity of the function $x \mapsto G_{1}^{\prime}(x)$ on  $ (0, \infty)  $, we deduce the following inequality
		\begin{linenomath} 
		\begin{align}\label{Tail-30}
			\frac{d}{dt}\left(\sum_{j=0}^{N} G_{1}(j) f_{j}^{N}(t)\right)\leq \sum_{j=1}^{N-1}\left( G_{1}(j+1) - 2G_{1}(j) + G_{1}(j-1)\right) f_{j}^{N}(t)\sum_{k=1}^{N-1}K_{j,k} f_{k}^{N}(t).
		\end{align}
	\end{linenomath}
	From \eqref{divconvexinequality-3}, we can write
	\begin{linenomath}
		\begin{align}
			G_{1}(j+1) - 2G_{1}(j) + G_{1}(j-1) &\leq G_{1}^{\prime}(j+1) \quad \text{for} \quad 1 \leq j \leq N-1.\label{divconvexinequalitytail-3}
		\end{align}
	\end{linenomath}
Applying \eqref{divconvexinequalitytail-3}, \eqref{quadraticcoagassum}, and  using $\max\{\mu,\nu\} \leq 1$, \eqref{Tail-30} can be further estimated as
\begin{linenomath}
		\begin{align}
			\frac{d}{dt}\left(\sum_{j=0}^{N} G_{1}(j) f_{j}^{N}(t)\right) &\leq 2C_{q} \sum_{j=1}^{N-1}\sum_{k=1}^{N-1}G_{1}^{\prime}(j+1) jk f_{j}^{N}(t) f_{k}^{N}(t).
		\end{align}
	\end{linenomath}
		Using \eqref{firstdiffinequality}, \eqref{localmassconservation} and \eqref{scalinginequality}, we obtain
		\begin{linenomath}
		\begin{align}
			\frac{d}{dt}\left(\sum_{j=0}^{N} G_{1}(j) f_{j}^{N}(t)\right) &\leq  4C_{q} \sum_{j=1}^{N-1}\sum_{k=1}^{N-1}G_{1}(j+1)k f_{j}^{N}(t) f_{k}^{N}(t) \nonumber\\
			&\leq 4C_{q} M_{1}(f(0))\sum_{j=1}^{N-1}\left(\frac{j+1}{j}\right)^{2}G_{1}(j) f_{j}^{N}(t) \nonumber\\
			&\leq 16C_{q} M_{1}(f(0))\sum_{j=0}^{N} G_{1}(j) f_{j}^{N}(t). \nonumber
		\end{align}
	\end{linenomath}
		Now, the inequality \eqref{mainconvexinequality-lamda3} follows from \Cref{Delavalle}(b) and Gronwall's differential inequality. Subsequently, \eqref{solmainconvexinequality-lambda3} is obtained from the uniform convergence of  $(f_{j}^{N})_{j\ge 0}$ to $( f_{j})_{j\ge 0}$ on $[0,T]$ and \eqref{mainconvexinequality-lamda3}, i.e.,
		\begin{linenomath}
		\begin{align*}
			\sum_{j=0}^{\infty} G_{1}(j) f_{j}(t)=\lim_{M\rightarrow\infty}\sum_{j=0}^{M} G_{1}(j) f_{j}(t)=\lim_{M\rightarrow\infty}\lim_{N\rightarrow\infty}\sum_{j=0}^{M} G_{1}(j)f_{j}^{N}(t)\leq C_{3}(T),
		\end{align*}
	\end{linenomath}
		which completes the proof of \Cref{Tail}.	
	\end{proof}
	We are now in a position to complete the proof of \Cref{mainthm-1}.
	\begin{proof}[\textbf{Proof of \Cref{mainthm-1}.}]
		Let us proceed with the proof of \Cref{mainthm-1}. In order to show part(a), we first claim that $\sum_{k=0}^{N-1}K_{j+1,k} f_{k}^{N}$ converges uniformly to $\sum_{k=0}^{\infty}K_{j+1,k} f_{k}$ on $[0,T]$ for every $j\in \mathbb{N}_{0}$. We observe that for $M\in \mathbb{N}$,
		\begin{linenomath}
		\begin{equation}
			\left\vert \sum_{k=0}^{\infty}K_{j+1,k} f_{k}^{N}-\sum_{k=0}^{\infty}%
			K_{j+1,k} f_{k}\right\vert \leq\sum_{k=0}^{M}K_{j+1,k}\left\vert  f_{k}^{N}%
			- f_{k}\right\vert +\left\vert \sum_{k=M+1}^{\infty}K_{j+1,k}( f_{k}+ f_{k}%
			^{N})\right\vert, \label{unidif1}%
		\end{equation}
	\end{linenomath}
		where $f_{k}^{N}:=0$ for $k>N$. Since $\lambda >1$, substituting \eqref{quadraticcoagassum}, \eqref{mainconvexinequality-lamda12} and \eqref{solmainconvexinequality-lambda12} into the second term on the right-hand side of \eqref{unidif1} yields
		\begin{linenomath} 
		\begin{align}
			\left\vert \sum_{k=M+1}^{\infty}K_{j+1,k}( f_{k}+ f_{k}^{N})\right\vert
			&\leq 2C_{q}(j+1)^{\lambda}\sum_{k=M+1}^{\infty} k^{\lambda} ( f_{k}+ f_{k}^{N}) \nonumber\\
			&\leq 2C_{q}(j+1)^{\lambda}\sup_{k\geq M+1}\left(\frac{k}{G_{\lambda}(k)}\right)\sum_{k=M+1}^{\infty}k^{\lambda-1}G_{\lambda}(k)( f_{k}+f_{k}^{N})\nonumber\\
			&\leq C_{s}(T)\sup_{k\geq M+1}\left(\frac{k}{G_{\lambda}(k)}\right),\label{coagtermconvergence-1}
		\end{align}
	\end{linenomath}
where $C_{s}(T):= 4C_{q}(j+1)^{\lambda}C_{1,2}(T)$.
	From \eqref{coagtermconvergence-1}, we infer that
	\begin{linenomath}
	\begin{align}\label{kertailvanish}
		\sup_{t\in [0,T]} K_{j+1,N}f_{N}^{N}(t)\leq \sup_{t\in [0,T]} \sum_{k=N}^{\infty}K_{j+1,k}f_{k}^{N}(t)\leq C_{s}(T)\sup_{k\geq N}\left(\frac{k}{G_{\lambda}(k)}\right).
	\end{align}
\end{linenomath}
		By applying \eqref{coagtermconvergence-1} in \eqref{unidif1} and using the triangle inequality, we get
		\begin{linenomath}
		\begin{align*}
			\sup_{t\in [0,T]}	\left\vert \sum_{k=0}^{N-1}K_{j+1,k} f_{k}^{N}-\sum_{k=0}^{\infty}
			K_{j+1,k} f_{k}\right\vert &\leq 	\sup_{t\in [0,T]}\sum_{k=0}^{M}K_{j+1,k}\left\vert  f_{k}^{N}
			- f_{k}\right\vert +	\sup_{t\in [0,T]} K_{j+1,N}f_{N}^{N}\nonumber\\ &\quad +C_{s}(T)\sup_{k\geq M+1}\left(\frac{k}{G_{\lambda}(k)}\right).
		\end{align*}
	\end{linenomath}
		Applying \eqref{kertailvanish}, the above inequality can be further estimated as
		\begin{linenomath} 
		\begin{align*}
			\sup_{t\in [0,T]}	\left\vert \sum_{k=0}^{N-1}K_{j+1,k} f_{k}^{N}-\sum_{k=0}^{\infty}
			K_{j+1,k} f_{k}\right\vert &\leq 	\sup_{t\in [0,T]}\sum_{k=0}^{M}K_{j+1,k}\left\vert  f_{k}^{N}
			- f_{k}\right\vert +C_{s}(T)\sup_{k\geq N}\left(\frac{k}{G_{\lambda}(k)}\right)\nonumber\\ &\quad +C_{s}(T)\sup_{k\geq M+1}\left(\frac{k}{G_{\lambda}(k)}\right).
		\end{align*}
	\end{linenomath}
Taking the limit as $N\rightarrow \infty$, we use \eqref{vanishingconvexlimit} and the uniform convergence of $(f_{j}^{N})_{j\ge 0}$ to $( f_{j})_{j\ge 0}$ on $[0,T]$ to obtain the following inequality
\begin{linenomath}
		\begin{align*}
			\lim_{N\rightarrow\infty} 	\sup_{t\in [0,T]} \left\vert \sum_{k=0}^{N-1}K_{j+1,k} f_{k}^{N}-\sum_{k=0}^{\infty}
			K_{j+1,k} f_{k}\right\vert &\leq C_{s}(T)\sup_{k\geq M+1}\left(\frac{k}{G_{\lambda}(k)}\right).
		\end{align*}
	\end{linenomath}
		Finally, passing the limit as $M\rightarrow \infty$ and using \eqref{vanishingconvexlimit}, we conclude that
		\begin{linenomath}
		\begin{align}\label{KC1}
				\lim_{N\rightarrow\infty} 	\sup_{t\in [0,T]} \left\vert \sum_{k=0}^{N-1}K_{j+1,k} f_{k}^{N}(t)-\sum_{k=0}^{\infty}
			K_{j+1,k} f_{k}(t)\right\vert &=0.
		\end{align}
	\end{linenomath}
	 Similarly, since $\lambda >1$, by using \eqref{quadraticcoagassum}, \eqref{mainconvexinequality-lamda12}, \eqref{solmainconvexinequality-lambda12}, \eqref{vanishingconvexlimit}, and the uniform convergence of  $(f_{j}^{N})_{j \ge 0} $ to  $(f_{j})_{j \ge 0} $ on  $[0,T] $, we can readily infer the following uniform convergence
	 \begin{linenomath}
		\begin{align}
			\lim_{N\rightarrow\infty} 	\sup_{t\in [0,T]} \left\vert \sum_{k=0}^{N-1}K_{j,k} f_{k}^{N}(t)-\sum_{k=0}^{\infty}
			K_{j,k} f_{k}(t)\right\vert &=0,\label{KC2} \\
				\lim_{N\rightarrow\infty} 	\sup_{t\in [0,T]} \left\vert \sum_{k=1}^{N}K_{k,j} f_{k}^{N}(t)-\sum_{k=1}^{\infty}
			K_{k,j} f_{k}(t)\right\vert &=0,\label{KC3}\\
				\lim_{N\rightarrow\infty} 	\sup_{t\in [0,T]} \left\vert \sum_{k=1}^{N}K_{k,j-1} f_{k}^{N}(t)-\sum_{k=1}^{\infty}
			K_{k, j-1} f_{k}(t)\right\vert &=0,\label{KC4}\\
				\lim_{N\rightarrow\infty} 	\sup_{t\in [0,T]} \left\vert \sum_{k=0}^{N-1}K_{1,k} f_{k}^{N}(t)-\sum_{k=0}^{\infty}
			K_{1,k} f_{k}(t)\right\vert &=0,\label{KC5}\\
			\text{ and}\quad	\lim_{N\rightarrow\infty} 	\sup_{t\in [0,T]} \left\vert \sum_{k=1}^{N}K_{k,0} f_{k}^{N}(t)-\sum_{k=1}^{\infty}
			K_{k,0} f_{k}(t)\right\vert &=0\label{KC6}.			
		\end{align}
	\end{linenomath}
		Upon expressing the truncated system \eqref{Tode0}--\eqref{Todej} in the integral form, the equations can be written as
		\begin{linenomath}
		\begin{align}
				f_{0}^{N}(t)&= f_{0}^{N}(0) + \int_{0}^{t} f_{1}^{N}(s)\sum_{k=0}^{N-1}K_{1,k} f_{k}^{N}(s) ds- \int_{0}^{t} f_{0}^{N}(s)\sum_{k=1}^{N}K_{k,0} f_{k}^{N}(s)ds,\nonumber 
	  \end{align}
  \end{linenomath}
   and
   \begin{linenomath}
	\begin{align}	
	f_{j}^{N}(t)  &  = f_{j}^{N}(0)+\int_{0}^{t} f_{j+1}^{N}(s)\sum
			_{k=0}^{N-1}K_{j+1,k} f_{k}^{N}(s) ds-\int_{0}^{t} f_{j}^{N}\sum_{k=0}%
			^{N-1}K_{j,k} f_{k}^{N}(s)ds \nonumber\\
			&\quad  -\int_{0}^{t} f_{j}^{N}(s)\sum_{k=1}^{N}K_{k,j} f_{k}^{N}(s) ds+\int_{0}^{t} f_{j-1}^{N}(s)\sum_{k=1}^{N}K_{k,j-1} f_{k}^{N}(s)ds,\nonumber
		\end{align}
	\end{linenomath}
	for $j=1,2,3, \ldots, N-1,N$.
		Now, we can take the limit as $N\rightarrow\infty$ in the  above equations, by using \eqref{KC1}--\eqref{KC6} and the uniform convergence of $(f_{j}^{N})_{j\ge 0}$ to $( f_{j})_{j\ge 0}$ on $[0,T]$. This demonstrates that  $( f_{j})_{j\ge 0}$, as the limit, serves as a mild solution to the EDG system (\ref{infode})-(\ref{infIC}). This completes the the proof of \Cref{mainthm-1}(a). Similarly, to prove \Cref{mainthm-1}(b), we only have to show that
		\begin{linenomath} 
		\begin{align}\label{coagtail}
			\lim_{M\rightarrow\infty} 	\sup_{t\in [0,T]} \left\vert \sum_{k=M+1}^{\infty}K_{j,k}( f_{k}(t)+ f_{k}^{N}(t))\right\vert = 0,
		\end{align}
	\end{linenomath}
		when $\max\{\mu,\nu\} \leq 1 \text{ and } \min\{\mu,\nu\} < 1$ with $f_{j}^{N}:=0$ for $j>N$. By using \eqref{quadraticcoagassum}, \eqref{mainconvexinequality-lamda3} and \eqref{solmainconvexinequality-lambda3}, we obtain
		\begin{linenomath}
		\begin{align}
			\left\vert \sum_{k=M+1}^{\infty}K_{j,k}( f_{k}(t)+ f_{k}^{N}(t))\right\vert
			&\leq 2C_{q}\sum_{k=M+1}^{\infty} jk ( f_{k}(t)+ f_{k}^{N}(t)) \nonumber\\
			&\leq 2C_{q}j\sum_{k=M+1}^{\infty}k( f_{k}(t)+ f_{k}^{N}(t)) \nonumber\\
			&\leq 2C_{q}j\sup_{k\geq M+1}\left(\frac{k}{G_{1}(k)}\right)\sum_{k=M+1}^{\infty}G_{1}(k)( f_{k}(t)+f_{k}^{N}(t))\nonumber\\
			&\leq 4C_{q}C_{3}(T)j\sup_{k\geq M+1}\left(\frac{k}{G_{1}(k)}\right).\nonumber
		\end{align}
	\end{linenomath} 
	This implies that
	\begin{linenomath} 
		\begin{align}
			\sup_{t\in [0,T]} \left\vert \sum_{k=M+1}^{\infty}K_{j,k}( f_{k}(t)+ f_{k}^{N}(t))\right\vert
		&\leq 4C_{q}C_{3}j\sup_{k\geq M+1}\left(\frac{k}{G_{1}(k)}\right).\nonumber
	\end{align}
\end{linenomath}
		Next, by passing the limit as $M\rightarrow \infty$ and using \eqref{vanishingconvexlimit}, we obtain \eqref{coagtail}, which completes the proof of \Cref{mainthm-1}(b).
	\end{proof}
	\begin{proof}[\textbf{Proof of \Cref{regularityandconservation}.}]
		Let $ (f_{j})_{j\ge 0}$ be the mild solution to \eqref{infode}--\eqref{infIC} under the
		conditions of \Cref{mainthm-1}. Then, from the construction in \Cref{mainthm-1}, consider the integral form of \eqref{infode}--\eqref{infIC}
		\begin{linenomath}
		\begin{align} 
			f_{0}(t)&= f_{0}(0)+\int_{0}^{t} f_{1}(s)\sum_{k=0}^{\infty}
			K_{1,k} f_{k}(s)ds - \int_{0}^{t} f_{0}(s)\sum_{k=1}^{\infty}K_{k,0} f_{k}(s) ds\nonumber\\
			\text{and}\quad	f_{j}(t)&= f_{j}(0)+\int_{0}^{t}   f_{j+1}(s)\sum_{k=0}^{\infty
			}K_{j+1,k} f_{k}(s)ds - \int_{0}^{t}f_{j}(s)\sum_{k=0}^{\infty}K_{j,k} f_{k}(s)  ds \nonumber \\
			&\quad -\int_{0}^{t}
			f_{j}(s)\sum_{k=1}^{\infty}K_{k,j} f_{k}(s) ds+\int_{0}^{t} f_{j-1}(s)\sum_{k=1}^{\infty}K_{k,j-1} f_{k}(s)ds, \nonumber
		\end{align}
	\end{linenomath}
	for all $j\in \mathbb{N}$. It is clear that the integrands on the right-hand sides of the above equations are continuous functions due to the uniform convergence of  $(f_{j}^{N})_{j \ge 0} $ to  $(f_{j})_{j \ge 0} $ on $[0,T]$ and the conditions in \eqref{KC1}--\eqref{KC6}.
	Therefore, the solution $(f_{j})_{j\ge 0}$ is differentiable by the fundamental theorem of calculus. Then, the conservation of number of particles, i.e., $M_{0}(f(t))=M_{0}(f(0))$ and total mass, i.e., $M_{1}(f(t))=M_{1}(f(0))=\rho$ of the system for all $t\in [0,T]$, is a consequence of \eqref{mainconvexinequality-lamda12}, \eqref{localparticleconservation}, \eqref{localmassconservation}, \eqref{solparticlemassbound}, \eqref{vanishingconvexlimit} and the uniform convergence of  $(f_{j}^{N})_{j \ge 0} $ to  $(f_{j})_{j \ge 0} $. In order to prove these conservation results, let us consider two cases. In the first case, assume that $f(0)=(f_{j}(0))_{j\ge 0} \in Y_{\lambda}^{+}$ with $\lambda = \max\{\mu,\nu\}>1$. First, we proceed to prove $M_{1}(f(t))=M_{1}(f(0))$ for all $t\in [0,T]$. For $N\geq M \ge 2$, from \eqref{localmassconservation}, \eqref{mainconvexinequality-lamda12}, and noting that $\lambda >1$, we have
	\begin{linenomath}
		\begin{align*}
			\left|\sum_{j=0}^{\infty}j\left(f_{j}(0)-f_{j}(t)\right)\right|  &\leq \left| \sum_{j=0}^{M-1}j\left(f_{j}^{N}(t)-f_{j}(t)\right)\right|+\sum_{j=M}^{N}jf_{j}^{N}(t)+\sum_{j=N+1}^{\infty}jf_{j}(0) +\sum_{j=M}^{\infty}jf_{j}(t)\\
			&\leq \left| \sum_{j=0}^{M-1}j\left(f_{j}^{N}(t)-f_{j}(t)\right)\right|+\sup_{j\geq M}\left(\frac{j}{G_{\lambda}(j)}\right)\sum_{j=M}^{N}j^{\lambda-1}G_{\lambda}(j)f_{j}^{N}(t)\\
			&\quad +\sum_{j=N+1}^{\infty}jf_{j}(0)+\sum_{j=M}^{\infty}jf_{j}(t)\\
			&\leq \left| \sum_{j=0}^{M-1}j\left(f_{j}^{N}(t)-f_{j}(t)\right)\right|+C_{1,2}(T)\sup_{j\geq M}\left(\frac{j}{G_{\lambda}(j)}\right)\\
			&\quad +\sum_{j=N+1}^{\infty}jf_{j}(0)+\sum_{j=M}^{\infty}jf_{j}(t).
		\end{align*}
	\end{linenomath}
Then, passing the limit as $N\rightarrow \infty$ and using the uniform convergence of $(f_{j}^{N})_{j\ge 0}$ to $(f_{j})_{j\ge 0}$ on $[0,T]$ and $M_{1}(f(0))<\infty$, we infer that
\begin{linenomath}
		\begin{align*}
			\left|\sum_{j=0}^{\infty}j\left(f_{j}(0)-f_{j}(t)\right)\right|  &\leq C_{1,2}(T)\sup_{j\geq M}\left(\frac{j}{G_{\lambda}(j)}\right) +\sum_{j=M}^{\infty}jf_{j}(t).
		\end{align*}
	\end{linenomath}
		Taking the limit as $M\rightarrow \infty$ and using \eqref{vanishingconvexlimit} and $M_{1}(t)<\infty$, we conclude that
		\begin{linenomath}
		\begin{align*}
			\sum_{j=0}^{\infty}j f_{j}(t)  =\sum_{j=0}^{\infty}j f_{j}(0).
		\end{align*}
	\end{linenomath}
		Similarly, for $N\geq M \ge 2$, using \eqref{localparticleconservation}, \eqref{localmassconservation} and \eqref{solparticlemassbound}, we set
		\begin{linenomath}
		\begin{align*}
			\left|\sum_{j=0}^{\infty}\left(f_{j}(0)-f_{j}(t)\right)\right|  &\leq \left| \sum_{j=0}^{M-1}\left(f_{j}^{N}(t)-f_{j}(t)\right)\right|+\sum_{j=M}^{N}f_{j}^{N}(t)+\sum_{j=N+1}^{\infty}f_{j}(0) +\sum_{j=M}^{\infty}f_{j}(t)\\
			&\leq \left| \sum_{j=0}^{M-1}\left(f_{j}^{N}(t)-f_{j}(t)\right)\right|+\frac{2}{M}M_{1}(f(0))+\sum_{j=N+1}^{\infty}jf_{j}(0).
		\end{align*}
	\end{linenomath}
		Again, taking the limit as $N \rightarrow \infty$ and using the uniform convergence of $(f_{j}^{N})_{j \ge 0}$ to $(f_{j})_{j \ge 0}$ on $[0,T]$ along with $M_{1}(f(0)) < \infty$, we infer that
		\begin{linenomath}
		\begin{align*}
			\left|\sum_{j=0}^{\infty}\left(f_{j}(0)-f_{j}(t)\right)\right|  &\leq \frac{2}{M}M_{1}(f(0)). 
		\end{align*}
	\end{linenomath}
		Finally, passing the limit as $M\rightarrow \infty$, we obtain
		\begin{linenomath}
		\begin{align*}
			\sum_{j=0}^{\infty}f_{j}(t)  =\sum_{j=0}^{\infty} f_{j}(0).
		\end{align*}
	\end{linenomath}
		Similarly, in the second case, i.e., when $ \max\{\mu,\nu\} \leq 1 \text{ and } \min\{\mu,\nu\} < 1$, the proofs for the conservation of the number of particles and the total mass of the system follow from \eqref{localparticleconservation}, \eqref{localmassconservation}, \eqref{solparticlemassbound}, \eqref{vanishingconvexlimit}, \eqref{mainconvexinequality-lamda3} and the uniform convergence of  $(f_{j}^{N})_{j \ge 0} $ to  $(f_{j})_{j \ge 0} $ on $[0,T]$. This completes the proof of \Cref{regularityandconservation}.	
	\end{proof}	
	\begin{proof} [\textbf{Proof of \Cref{localsolution}}]
		The proof of \Cref{localsolution} is based on the method analogous to that used in the proof of \Cref{mainthm-1}. Assuming that $K_{j,k}=K_{k,j} \leq C j^2 k^2$, let us first consider
		\begin{linenomath}
		\begin{equation}
			\frac{d}{dt}\left(\sum_{j=0}^{N} j^{2}f_{j}^{N}(t)\right) \leq 2C \sum_{j=0}^{N-1} \sum_{k=0}^{N-1} j^2 k^2 f_{j}^{N}(t) f_{k}^{N}(t) \leq 2C \left( \sum_{j=0}^{N} j^{2}f_{j}^{N}(t) \right)^2,\nonumber
		\end{equation}
	\end{linenomath}
		which implies that
		\begin{linenomath}
		\begin{align}\label{M2bounnd}
			\sum_{j=0}^{N} j^{2}f_{j}^{N}(t)\leq\frac{1}{\frac{1}{M_{2}^{N}(f(0))}-2Ct}\leq\frac{1}{\frac
				{1}{M_{2}(f(0))}-2Ct}\text{ for }t< \frac{1}{2M_{2}(f(0))C}.
		\end{align}
	\end{linenomath}
		Despite this, we can still construct a subsequence $(f_{j}^{N})_{j\ge 0}$ (not relabeled), as previously that converges uniformly to a limiting function  $(f_{j})_{j\ge 0}$ on $[0,T_{0}]$, where $0<T_{0}< \frac{1}{2M_{2}(f(0))C}$. Subsequently, we can establish an upper bound for $\sum_{j=0}^{\infty}jG_{2}(j)f_{j}^{N}(t)$ up to a finite time $T_{0}<\frac{1}{2M_{2}(f(0))C}$, for some $G_{2} \in \mathcal{G}_{1,\infty}$ satisfying $\sum_{j=0}^{\infty}jG_{2}(j)f_{j}(0)<\infty$, by using the de la Vall\'{e}e-Poussin theorem and the Gronwall inequality. Afterward, we can demonstrate that the partial sums in the truncated system  \eqref{Tode0}--\eqref{Todej} converge uniformly up to time $T_{0}$, which shows the existence of  continuously differentiable solution $( f_{j})_{j\ge 0}\in  Y_{2}^{+}$ to the infinite EDG system \eqref{infode}--\eqref{infIC} on $[0,T_{0}]$. Moreover, the truncated solution $(f_{j}^{N})_{j\ge 0}$ to \eqref{Tode0}--\eqref{TodeIC} satisfies \eqref{localparticleconservation} and \eqref{localmassconservation}. Hence, similarly to the proof of \Cref{regularityandconservation}, we can prove that  $ M_{0}(f(t)) = M_{0}(f(0))  $ and  $ M_{1}(f(t)) = M_{1}(f(0))  $ for all  $ t \in [0,T_{0}]  $, where  $ T_{0} < \frac{1}{2M_{2}(f(0))C}  $. This is accomplished by employing \eqref{localparticleconservation}, \eqref{localmassconservation}, \eqref{M2bounnd}, and  $ (f_{j})_{j \ge 0} \in Y_{2}^{+}  $, thereby completing the proof of \Cref{localsolution}.
	\end{proof}
	\section{Finite Time Gelation}
	\noindent Before proceeding to show the occurrence of finite time gelation, we need some higher moment bounds. Therefore, we prove the following propagation of moments result.
	\begin{lemma}[Propagation of moments]\label{momentpropagation}
		Consider the infinite EDG system \eqref{infode}--\eqref{infIC}. Let
		$K_{j,k}$ be the symmetric interaction kernel and satisfy $C_{1}\left(j^{2}k^{\alpha}+j^{\alpha}k^{2}\right)\leq K_{j,k}\leq Cj^{2}k^{2}$ for all $(j,k)\in N_{0}\times N_{0}$, where $1<\alpha\leq 2$ and $C_{1}>0$. If $f(0)=(f_{j}(0))_{j\ge 0} \in Y_{r}^{+}$ with $M_{r}(f(0))>0$ for $r\geq 2$, then the  continuously differentiable local solution to \eqref{infode}--\eqref{infIC} on $[0, T_{0}]$, constructed in \Cref{localsolution}, satisfies $( f_{j}(t))_{j\ge 0}\in  Y_{r}^{+}$ for all $t \in [0,T_{0}]$, where $0<T_{0}< \frac{1}{2M_{2}(f(0))C}$.
	\end{lemma}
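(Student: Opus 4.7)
The plan is to apply \Cref{L-divergenceform} (the divergence-form identity) to the truncated system with the choice $h_j = j^r$, derive a linear Gronwall-type differential inequality for $M_r^N(f^N(t))$ whose right-hand side is controlled by $M_2^N(f^N(t))$, and then pass to the limit $N\to\infty$ on the interval where the local solution has been constructed.

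First, I would substitute $h_j = j^r$ into the identity \eqref{mom-red2}. Since $r\geq 2$, the function $x\mapsto x^r$ is convex with non-negative increasing derivative, so by the same monotonicity argument used in the proof of \Cref{truncatedlemma-1} for the choice $h_j = j^\lambda$, each of the three boundary-type factors $(h_{j-1}-h_j)+(h_1-h_0)$, $(h_{j+1}-h_j)+(h_{N-1}-h_N)$, and $(h_{N-1}-h_N)+(h_1-h_0)$ can be rewritten via the mean value theorem as a difference $h'(\xi_1)-h'(\xi_2)$ with $\xi_1\leq\xi_2$, hence is $\leq 0$. This collapses \eqref{mom-red2} to
\begin{equation*}
\frac{d}{dt}M_r^N(f^N(t)) \leq \sum_{j=1}^{N-1}(h_{j+1}-2h_j+h_{j-1})\,f_j^N(t)\sum_{k=1}^{N-1}K_{j,k}\,f_k^N(t).
\end{equation*}

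Next, I would control the second difference $h_{j+1}-2h_j+h_{j-1}$ by a Taylor/mean value argument, which gives $h_{j+1}-2h_j+h_{j-1}=r(r-1)\theta(j)^{r-2}$ for some $\theta(j)\in(j-1,j+1)$, hence $h_{j+1}-2h_j+h_{j-1}\leq 2^{r-2}r(r-1)j^{r-2}$ for $j\geq 2$, with the value $2^r-2$ at $j=1$. Combined with $K_{j,k}\leq Cj^2k^2$, this yields
\begin{equation*}
\frac{d}{dt}M_r^N(f^N(t)) \leq \widetilde{C}_r\sum_{j=1}^{N-1}\sum_{k=1}^{N-1}j^{r-2}\cdot j^2 k^2\, f_j^N(t)\,f_k^N(t) = \widetilde{C}_r\,M_r^N(f^N(t))\,M_2^N(f^N(t)),
\end{equation*}
for a constant $\widetilde{C}_r$ depending only on $C$ and $r$. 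The a priori bound \eqref{M2bounnd} obtained in the proof of \Cref{localsolution} shows that $M_2^N(f^N(t))\leq[1/M_2(f(0))-2Ct]^{-1}$ is uniformly bounded on $[0,T_0]$ by some constant $C_{\ast}(T_0)$, since $T_0<(2M_2(f(0))C)^{-1}$. Gronwall's differential inequality then delivers the uniform-in-$N$ bound $M_r^N(f^N(t))\leq M_r(f(0))\exp(\widetilde{C}_r C_{\ast}(T_0) T_0)$ for all $t\in[0,T_0]$.

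Finally, I would pass to the limit $N\to\infty$. The uniform convergence of $(f_j^N)_{j\geq 0}$ to $(f_j)_{j\geq 0}$ on $[0,T_0]$ established in the proof of \Cref{localsolution} gives pointwise convergence for each $j$, and Fatou's lemma (applied with counting measure on $\mathbb{N}_0$) yields $M_r(f(t))\leq\liminf_{N\to\infty}M_r^N(f^N(t))\leq M_r(f(0))\exp(\widetilde{C}_r C_{\ast}(T_0) T_0)<\infty$ for every $t\in[0,T_0]$, so $f(t)\in Y_r^+$ as claimed. The main potential obstacle is the careful verification that the three boundary sums in \eqref{mom-red2} are non-positive for $h_j=j^r$ with $r\geq 2$, but this reduces, as indicated above, to a few mean value and monotonicity estimates; beyond this, the argument is a standard linear propagation of moments whose crucial quantitative input is the a priori estimate \eqref{M2bounnd} that confines the bound to the interval on which the local solution has been constructed.
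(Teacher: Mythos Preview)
Your proposal is correct and follows essentially the same strategy as the paper: use the upper bound $K_{j,k}\le Cj^2k^2$ to reduce the $r$-th moment evolution to a linear inequality governed by $M_2$, invoke the a~priori bound \eqref{M2bounnd}, and close with Gronwall. The only noteworthy difference is the truncation scheme: you work with the finite truncated system \eqref{Tode0}--\eqref{TodeN} and the full test sequence $h_j=j^r$, then pass $N\to\infty$ via the compactness already secured in \Cref{localsolution}; the paper instead works directly with the infinite system in integral form and uses the truncated test sequence $h_j=\min\{j^r,N^r\}$, which makes the second difference vanish (or be non-positive) for $j\ge N$ and avoids any boundary terms altogether. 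Your route requires the extra verification that the three boundary sums in \eqref{mom-red2} are non-positive for $h_j=j^r$ with $r\ge 2$, which you handle correctly, while the paper's route sidesteps this at the cost of a slightly less transparent test function; otherwise the two arguments are interchangeable.
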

	\begin{proof}
		From \Cref{localsolution}, it is evident that the system \eqref{infode}--\eqref{infIC} has a  continuously differentiable local solution $( f_{j})_{j\ge 0}\in  Y_{2}^{+}$ satisfying
		\begin{linenomath} 
		\begin{align}\label{propsecondM2}
			M_{2}(f_{j}(t)) \leq \frac{1}{\frac
				{1}{M_{2}(f(0))}-2CT_{0}},\quad \; \text{for each}\; t \in [0,T_{0}].
		\end{align}
	\end{linenomath}
		Hence, we only need to show that  $( f_{j}(t))_{j\ge 0}\in  Y_{r}^{+}$ for all $t \in [0,T_{0}]$ with $r \ge 2$. For $N>1$, define $h_{j}:= \min \left\{j^{r}, N^{r}\right\}$ for  $j\in \mathbb{N}_{0}$.
		Since $K_{j,k}$ is symmetric with $K_{j,0}=0=K_{k,0}$ for all $j, k \in \mathbb{N}$, we infer from \Cref{notionofsolution} that
		\begin{linenomath}
		\begin{align}\label{proptemp1}
			\sum_{j=0}^{\infty}h_{j} f_{j}(t)=&\sum_{j=0}^{\infty}h_{j} f_{j}(0)+\int_{0}^{t}\sum_{j=1}^{\infty} \left(h_{j+1}-2h_{j}+ h_{j-1}\right)\sum_{k=1}^{\infty}  K_{j,k} f_{j}(s) f_{k}(s)ds, 
		\end{align}
	\end{linenomath}
		for each $t\in [0, T_{0}]$. Then, using the non-negativity of $K_{j,k}$ and $(f_{j})_{j\ge 0}$, from \eqref{proptemp1}, we derive the following estimate
		\begin{linenomath}
			\begin{align}
			\sum_{j=1}^{N}j^{r} f_{j}(t)\leq &\sum_{j=1}^{\infty}j^{r} f_{j}(0)+\int_{0}^{t}\sum_{j=1}^{N-1} \left(h_{j+1}-2h_{j}+ h_{j-1}\right)\sum_{k=1}^{\infty}  K_{j,k} f_{j}(s) f_{k}(s)ds. \nonumber 
		\end{align}
	\end{linenomath}
		Then, by the mean value theorem \eqref{mean value theorem}, we get the identity
		\begin{linenomath}
		\begin{align}\label{meanvalue2}
		h_{j+1}-2h_{j}+ h_{j-1}=h^{\prime \prime }(\theta(j))=r(r-1)\theta(j)^{r-2}
		\end{align}
	\end{linenomath} 
		 for some $\theta(j) \in (j-1,j+1)$ with $j=1, 2,\ldots, N-1$.
		Therefore, by inserting \eqref{propsecondM2} and \eqref{meanvalue2} into \eqref{proptemp1} and noting that $r\ge 2$, we deduce the following estimate
		\begin{linenomath}
		\begin{align}
			\sum_{j=1}^{N}j^{r} f_{j}(t)\leq &\sum_{j=1}^{\infty}j^{r} f_{j}(0)+ Cr(r-1)\sum_{j=1}^{N-1}\sum_{k=1}^{\infty}  \theta(j)^{r-2}j^{2}k^{2} f_{j}(t) f_{k}(t)\nonumber\\
			&\leq M_{r}(f(0)) + Cr(r-1)\sum_{j=1}^{N-1}\sum_{k=1}^{\infty}  (j+1)^{r-2}j^{2}k^{2} f_{j}(t) f_{k}(t)\nonumber\\
			&\leq M_{r}(f(0)) + Cr(r-1)2^{r-2}\sum_{j=1}^{N}\sum_{k=1}^{\infty}  j^{r}k^{2} f_{j}(t) f_{k}(t) \nonumber\\
			& = M_{r}(f(0)) + Cr(r-1)2^{r-2} M_{2}(f(t))\sum_{j=1}^{N}j^{r}f_{j}(t) \leq \tilde{C_{\star}} \sum_{j=1}^{N}j^{r}f_{j}(t),\nonumber
		\end{align}
	\end{linenomath}
		where $\tilde{C_{\star}}:= \frac{Cr(r-1)2^{r-2} M_{2}(f(0))}{1-2CT_{0}M_{2}(f(0))}$. 
		Subsequently, by applying Gronwall's lemma, we derive
		\begin{linenomath}
		\begin{align}
			\sum_{j=1}^{N}j^{r}f_{j}(t)\leq M_{r}(f(0))e^{\tilde{C_{\star}}t}.\nonumber
		\end{align}
	\end{linenomath}
		By passing the limit as $N \rightarrow \infty$, we finally obtain
		\begin{linenomath} 
		\begin{align}
			M_{r}(f(t))\leq M_{r}(f(0))e^{\tilde{C_{\star}}t},\;\; t\in [0, T_{0}].\nonumber
		\end{align}
	\end{linenomath}
		This completes the proof of \Cref{momentpropagation}.
	\end{proof}
	With this preparation, we are now ready to provide the proof of \Cref{finitetimegel}.
	\begin{proof} [\textbf{Proof of \Cref{finitetimegel}}]
		Since $M_{2+\alpha}(f(0))<\infty$, from \Cref{momentpropagation}, we have a continuously differentiable local solution  $( f_{j}(t))_{j\ge 0}$ to \eqref{infode}--\eqref{infIC} in $Y_{2+\alpha}^{+}$, for $t< \frac{1}{2M_{2}(f(0))C}$, such that
		\begin{linenomath}
		\begin{align}\label{alplaequation}
			M_{\alpha}(f(t))&=	M_{\alpha}(f(0))+\int_{0}^{t}\sum_{j=1}^{\infty}( (j+1)^{\alpha}%
			-2j^{\alpha}+ (j-1)^{\alpha}) f_{j}(s)\sum_{k=1}^{\infty}K_{j,k} f_{k}(s)ds,
		\end{align}
	\end{linenomath}
		for $1<\alpha\leq2$. By using the 
		mean value theorem \eqref{mean value theorem} in the above equation, we infer that
		\begin{linenomath} 
		\begin{align}
			M_{\alpha}(f(t))=M_{\alpha}(f(0))+\alpha (\alpha-1)\int_{0}^{t}\sum_{j=1}^{\infty}\sum_{k=1}^{\infty}\theta(j) ^{\alpha-2}K_{j,k} f_{j}(s) f_{k}(s)ds,\nonumber
		\end{align}
	\end{linenomath}
		where $\theta(j) \in (j-1,\;j+1)$.
		Since $\alpha-2\leq 0$ and $\alpha>1$, we have 
		\begin{linenomath}
		\begin{align}
			M_{\alpha}(f(t))&\geq M_{\alpha}(f(0))+C_{1}\alpha (\alpha-1)\int_{0}^{t}\sum_{j=1}^{\infty}\sum_{k=1}^{\infty}(j+1) ^{\alpha-2}j^{2}k^{\alpha} f_{j}(s) f_{k}(s)ds \nonumber\\ 
			&\geq  M_{\alpha}(f(0))+C_{1}\alpha (\alpha-1)2^{\alpha-2} \int_{0}^{t}M_{\alpha}^{2}(f(s))ds.\nonumber
		\end{align}
	\end{linenomath}
		Solving the above integral inequality, we obtain
		\begin{linenomath}
		\begin{align}
			M_{\alpha}(f(t))\geq \left[\frac{1}{M_{\alpha}(f(0))}-C_{1}\alpha (\alpha-1)2^{\alpha-2}t\right]^{-1}.\nonumber
		\end{align}
	\end{linenomath}
This inequality shows that  $ M_{\alpha}(f(t))  $ blows up at  $ t = \frac{1}{C_{1} \alpha (\alpha - 1) 2^{\alpha - 2} M_{\alpha}(f(0))}  $. Therefore,  $ T_{gel} \leq \frac{1}{C_{1} \alpha (\alpha - 1) 2^{\alpha - 2} M_{\alpha}(f(0))}  $, indicating the occurrence of finite-time gelation. Next, to prove that  $ T_{gel} = \frac{1}{2M_{2}(f(0))C}  $ when  $ K_{j,k} = C j^{2} k^{2}  $ for all  $ (j,k) \in \mathbb{N}_{0} \times \mathbb{N}_{0}  $, we set  $ \alpha = 2  $ and  $ C_{1} = \frac{C}{2}  $. From \eqref{alplaequation}, we then deduce the following equation
\begin{linenomath}
\begin{align}
	M_{2}(f(t)) &= M_{2}(f(0)) + 2C \int_{0}^{t} \sum_{j=1}^{\infty} \sum_{k=1}^{\infty} j^{2} k^{2} f_{j}(f(s)) f_{k}(s) \, ds \nonumber \\
	&= M_{2}(f(0)) + 2C \int_{0}^{t} M_{2}^{2}(s) \, ds, \nonumber
\end{align}
\end{linenomath}
for   $0\leq t < \frac{1}{2M_{2}(f(0))C} $. Solving the above integral equality, we find that
\begin{linenomath}
\begin{align}
	M_{2}(f(t)) = \left[\frac{1}{M_{2}(f(0))} - 2Ct\right]^{-1}, \nonumber
\end{align}
\end{linenomath}
which implies  $ T_{gel} \leq \frac{1}{2M_{2}(f(0))C}  $ as  $ M_{2}(f(t))  $ blows up at  $ t = \frac{1}{2M_{2}(f(0))C}  $. However, from \Cref{localsolution}, we have  $ T_{gel} \geq \frac{1}{2M_{2}(f(0))C}  $. Therefore, we conclude that  $ T_{gel} = \frac{1}{2M_{2}(f(0))C}  $. This completes the proof of \Cref{finitetimegel}.
\end{proof}
\begin{proof}[\textbf{Proof of \Cref{nonexistenceresult-1}}]
		Suppose that the infinite EDG system \eqref{infode}--\eqref{infIC} has a global mass conserving solution $(f_{j}(t))_{j\ge 0}$ in the space $Y_{2}^{+}$, i.e., $M_{2}(f(t))<\infty$ for all $t\in (0,\infty)$. This is a contradiction to the blow up of $M_{\alpha}(f(t))$ at $t=\frac{1}{C_{1}\alpha (\alpha-1)2^{\alpha-2} M_{\alpha}(f(0))}$, for $1< \alpha \le 2$.
	\end{proof}
	\section{Instantaneous Gelation}
	\noindent Finally, we focus on the possible occurrence of instantaneous gelation, which leads to the non-existence of solutions to \eqref{infode}--\eqref{infIC}. The work done in \cite{Naim} indicates that specific interaction kernels of the form $K_{j,k}=j^{\mu}k^{\nu}+j^{\nu}k^{\mu}$ with $\mu,\nu>2$ lead to the occurrence of instantaneous gelation phenomena. Building on approaches from \cite{Esenturk, Ball}, we demonstrate that instantaneous gelation occurs with faster-growing interaction kernels. Understanding the tail behavior of the solution to  \eqref{infode}--\eqref{infIC} with such faster-growing  interaction kernels is essential for proving \Cref{Tgel}. In order to control the tail of the solution, we consider the following infinite system
	\begin{linenomath}
	\begin{align*}
		\dot{ f}_{j}=I_{j-1}( f)-I_{j}( f)\quad \text{for} \; j\in \mathbb{N},
	\end{align*}
\end{linenomath}
	where
	\begin{linenomath}
	\begin{equation}
		I_{j}( f)= f_{j}\sum_{k=1}^{\infty}K_{k,j} f_{k}- f_{j+1}\sum_{k=0}^{\infty
		}K_{j+1,k} f_{k}. \label{flow-equ}
	\end{equation}
\end{linenomath}
	The following lemmas are crucial for establishing the aforementioned \Cref{Tgel}. We now present the result from \cite[Lemma 4]{Esenturk}.
	\begin{lemma}
		\label{L-tail}Let $ (f_{j}(t))_{j\ge 0} \in Y_{2}^{+}$ be a mild solution of the EDG system \eqref{infode}--\eqref{infIC} for $t\in [0,T]$ and $0<T< T_{gel}$. Then, we have the following identities for $0<\sigma <  t\leq T$ and $m\ge 2$
		\begin{linenomath}
		\begin{align*}
			\sum_{j=m}^{\infty} f_{j}(t)-\sum_{j=m}^{\infty} f_{j}(\sigma)  &  =\int_{ \sigma}%
			^{t}I_{m-1}( f(s))ds,\\
			\sum_{j=m}^{\infty}j f_{j}(t)-\sum_{j=m}^{\infty}j f_{j}( \sigma)  &  =\int_{ \sigma}%
			^{t}\sum_{j=m}^{\infty}I_{j}( f(s))ds+m\int_{ \sigma}^{t}I_{m-1}( f(s))ds,\\
		\sum_{j=m}^{\infty}j^{2} f_{j}(t)-\sum_{j=m}^{\infty}j^{2} f_{j}( \sigma)  &
			=\int_{ \sigma}^{t}\sum_{j=m}^{\infty}(2j+1)I_{j}( f(s))ds+m^{2}\int_{ \sigma}^{t}%
			I_{m-1}( f(s))ds.
		\end{align*}
		\end{linenomath}
	\end{lemma}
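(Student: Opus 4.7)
The plan is to recast the EDG equations in conservation-law form and then carry out summation by parts. Starting from item (3) of \Cref{notionofsolution} and regrouping, for each $j\geq 1$ the mild solution satisfies
\[
f_j(t)-f_j(\sigma)=\int_{\sigma}^{t}\bigl(I_{j-1}(f(s))-I_j(f(s))\bigr)\,ds,
\]
since collecting the gain/loss terms associated with the monomer transfer between sizes $j$ and $j\pm1$ in \eqref{infode} reproduces exactly the difference $I_{j-1}-I_j$ of two consecutive fluxes defined in \eqref{flow-equ}.

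First, for fixed $N\geq m\geq 2$, I would sum the above integrated identity from $j=m$ to $j=N$. The telescoping of $I_{j-1}-I_j$ immediately produces
\[
\sum_{j=m}^{N}\bigl(f_j(t)-f_j(\sigma)\bigr)=\int_{\sigma}^{t}\bigl(I_{m-1}(f(s))-I_N(f(s))\bigr)\,ds,
\]
which is the truncated version of the first identity. For the second and third identities I would apply Abel summation to $\sum_{j=m}^{N}j(I_{j-1}-I_j)$ and $\sum_{j=m}^{N}j^2(I_{j-1}-I_j)$, using the reindexing $\sum_{j=m}^{N} j\,I_{j-1}=\sum_{k=m-1}^{N-1}(k+1)\,I_k$ and $\sum_{j=m}^{N} j^2 I_{j-1}=\sum_{k=m-1}^{N-1}(k+1)^2 I_k$. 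This produces the boundary contributions $mI_{m-1}$ and $m^2 I_{m-1}$, the interior sums $\sum_{k=m}^{N-1}I_k$ and $\sum_{k=m}^{N-1}(2k+1)I_k$, and the remainders $NI_N$ and $N^2 I_N$, respectively.

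The main obstacle is the passage to the limit $N\to\infty$, which reduces to showing that the weighted remainders $\int_{\sigma}^{t} N^{k} I_N(f(s))\,ds$ vanish for $k=0,1,2$. Since $T<T_{gel}$, the moment-propagation principle recalled in the introduction guarantees $M_r(f(s))<\infty$ uniformly on $[\sigma,t]$ for every $r\in\mathbb{N}$. Using the kernel growth available in the setting at hand (namely $K_{j,k}\leq Cj^2k^2$ in the finite-gelation regime) the pointwise bound
\[
N^2|I_N(f(s))|\leq CN^4 f_N(s) M_2(f(s))+C(N+1)^4 f_{N+1}(s) M_2(f(s))
\]
is dominated by a constant multiple of $M_4(f(s))M_2(f(s))$, which is integrable on $[\sigma,t]$, while the convergence of $\sum_j j^4 f_j(s)$ forces $j^4 f_j(s)\to 0$ pointwise in $s$. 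Dominated convergence then sends the $N^k I_N$ remainders to zero under the integral for $k=0,1,2$. The same moment control ensures absolute convergence of the interior tail sums $\sum_{j\geq m} I_j(f(s))$ and $\sum_{j\geq m}(2j+1)I_j(f(s))$, as well as of the left-hand side tails $\sum_{j\geq m}j^k(f_j(t)-f_j(\sigma))$, so letting $N\to\infty$ in the three truncated relations delivers the three identities of \Cref{L-tail}.
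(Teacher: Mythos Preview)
The paper does not supply its own proof here: immediately before the statement it records that the result is taken from \cite[Lemma~4]{Esenturk}. Your argument---rewrite \eqref{infode} in flux form $\dot f_j=I_{j-1}-I_j$, telescope and Abel-sum between $m$ and $N$, then send $N\to\infty$---is precisely the standard derivation and is the route taken in Esenturk's paper, so at the structural level there is nothing to compare.

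One point is worth tightening. To kill the boundary remainders $\int_\sigma^t N^{k}I_N\,ds$ you invoke the bound $K_{j,k}\le Cj^{2}k^{2}$ together with the finiteness of $M_4$. That quadratic upper bound is indeed a standing hypothesis in \Cref{localsolution} and \Cref{finitetimegel}, but \Cref{L-tail} itself is stated for an arbitrary mild solution in $Y_2^{+}$ with $T<T_{gel}$, and it is subsequently invoked in the proof of \Cref{finintemomentproperty}, where only the \emph{lower} bound $K_{j,k}\ge C(j^{\beta}+k^{\beta})$ is assumed and no upper bound on $K$ is available. In that generality your dominated-convergence estimate, as written, does not apply. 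If you want the lemma at the level of generality in which the paper states and uses it, the vanishing of the boundary terms should be justified from item~(2) of \Cref{notionofsolution} (the $L^1$-in-time control of $\sum_k K_{j,k}f_k$) together with the fact that $t<T_{gel}$ forces all moments to be finite, rather than from a specific polynomial growth bound on $K$.
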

	\begin{lemma}\label{monotoneM2}
		Let  $(f_{j}(t))_{j\ge 0} \in Y_{2}^{+}$ be a mild solution of the EDG system \eqref{infode}--\eqref{infIC} for $t\in [0,T]$ and $0<T< T_{gel}$. Assume that $K_{j,k}$ is a non-negative symmetric interaction kernel satisfying $K_{j,0}=0$ for all $j \in \mathbb{N}_{0}$. Then  $ M_{0}(f(t)) = M_{0}(f(0))  $ and  $ M_{1}(f(t)) = M_{1}(f(0))  $ for any  $ t \in [0, T]  $, and  $ M_{2}(f(\cdot))  $ is a non-decreasing function on  $ [0, T]  $.
	\end{lemma}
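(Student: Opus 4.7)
The plan is to combine the tail identities of \Cref{L-tail} (with $m=2$) with the mild formulation \eqref{infode}--\eqref{0-infode} for $j=0$ and $j=1$ to obtain closed identities for $M_p(f(t))-M_p(f(\sigma))$ with $p=0,1,2$ on any interval $0<\sigma<t\le T$. Since $T<T_{gel}$, the criterion recalled after the definition of $T_{gel}$ in Section~1 gives $M_m(f(s))<\infty$ for every $m\in\mathbb N$ and $s\in[0,T]$, so all sums below converge. The mild equations may be rewritten in flux form as $\dot f_j=I_{j-1}(f)-I_j(f)$ for $j\ge 1$ and $\dot f_0=-I_0(f)$, the latter using $K_{k,0}=0$.

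For $p=0$, adding $f_0(t)+f_1(t)-f_0(\sigma)-f_1(\sigma)=-\int_\sigma^t I_1(f(s))\,ds$ (from the mild equations for $f_0$ and $f_1$) to the first identity of \Cref{L-tail} at $m=2$ gives $M_0(f(t))=M_0(f(\sigma))$. For $p=1$, combining the second identity at $m=2$ with the mild equation for $f_1$ yields
\[
M_1(f(t))-M_1(f(\sigma))=\int_\sigma^t\sum_{j=0}^\infty I_j(f(s))\,ds;
\]
shifting the index $l=j+1$ in the loss part of $I_j$, invoking symmetry $K_{k,j}=K_{j,k}$, and using $K_{0,k}=K_{k,0}=0$, both the creation and the loss pieces reduce to $\sum_{j,k\ge 1}K_{j,k}f_jf_k$, so the inner sum vanishes and $M_1$ is conserved. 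For $p=2$, the analogous identity reads
\[
M_2(f(t))-M_2(f(\sigma))=\int_\sigma^t\sum_{j=0}^\infty (2j+1)\,I_j(f(s))\,ds,
\]
and the same re-indexing pairs the creation weight $(2j+1)$ against the loss weight $(2j-1)$ (since the shift $l=j+1$ turns the original factor $(2j+1)$ into $2(l-1)+1=2l-1$), so the inner sum equals $2\sum_{j,k\ge 1}K_{j,k}f_jf_k\ge 0$ and $M_2$ is non-decreasing on $(\sigma,T]$.

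To extend the three statements down to $\sigma=0$, Fatou's lemma combined with the continuity of each $f_j$ at $0$ gives $M_p(f(0))\le\liminf_{\sigma\to 0^+}M_p(f(\sigma))$ for every $p\ge 0$; this already delivers the monotonicity of $M_2$ on the closed interval $[0,T]$. For $p=0,1$ the reverse inequality is obtained from the tail bound $\sum_{j>N}j^pf_j(s)\le M_2(f(s))/N^{2-p}$ and the continuity of each finite partial sum $\sum_{j=0}^N j^pf_j(\cdot)$, using the uniform bound $\sup_{s\in[0,T]}M_2(f(s))<\infty$ inherent in $f\in Y_2^+$.

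The main obstacle is the algebraic bookkeeping in the two cancellation arguments for $\sum_jI_j$ and $\sum_j(2j+1)I_j$: the hypothesis $K_{k,0}=0$ must be used twice, once to remove the $k=0$ (respectively $j=0$) contribution from the creation side and once to discard the new lower endpoint produced by the shift $l=j+1$ on the loss side, so that both double sums live on the common index set $\{(j,k):j,k\ge 1\}$ where symmetry can be invoked. Carrying out the re-indexing on finite partial sums before passing to the limit avoids any manipulation of conditionally convergent series, and the mild-solution condition \Cref{notionofsolution}(2) guarantees integrability throughout.
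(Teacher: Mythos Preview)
Your proof is correct, but it takes a noticeably longer route than the paper's. The paper derives directly from \Cref{notionofsolution} (using symmetry and $K_{j,0}=0$) the single weak identity
\[
\sum_{j=0}^{\infty}h_{j} f_{j}(t_{2})=\sum_{j=0}^{\infty}h_{j} f_{j}(t_{1})+\int_{t_{1}}^{t_{2}}\sum_{j=1}^{\infty}\bigl(h_{j+1}-2h_{j}+h_{j-1}\bigr)\sum_{k=1}^{\infty}K_{j,k}f_{j}(s)f_{k}(s)\,ds,
\]
valid for arbitrary $0\le t_{1}\le t_{2}\le T$, and then simply plugs in $h_{j}=1$, $h_{j}=j$, and $h_{j}=j^{2}$; the second differences are $0$, $0$, and $2$, respectively, giving the three conclusions in one line each with no endpoint issue at $t_{1}=0$.

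By contrast, you reconstruct these same identities piecewise: you invoke the tail formulas of \Cref{L-tail} at $m=2$, patch in the mild equations for $j=0,1$ by hand, perform the flux cancellation $\sum_{j}(2j+1)I_{j}=2\sum_{j,k\ge 1}K_{j,k}f_{j}f_{k}$ via re-indexing, and then carry out a separate Fatou/continuity argument to push $\sigma\downarrow 0$. All of this is valid, and your explicit care about absolute convergence (appealing to finiteness of all moments for $t<T_{gel}$ and to \Cref{notionofsolution}(2)) is a virtue the paper leaves implicit. But the second-difference form packages the symmetry cancellation once and for all, avoids the detour through \Cref{L-tail}, and handles $t_{1}=0$ directly, so it is the shorter and more transparent argument.
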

	\begin{proof}
	Consider $0<T< T_{gel}$ and  $0\leq t_{1}\leq t_{2} \leq T$. Since $K_{j,k}$ is symmetric with $K_{j,0}=0=K_{k,0}$ for all $j, k \in \mathbb{N}$, we infer from \Cref{notionofsolution} that
	\begin{linenomath}
		\begin{align}\label{infinitediveform}
			\sum_{j=0}^{\infty}h_{j} f_{j}(t_{2})=&\sum_{j=0}^{\infty}h_{j} f_{j}(t_{1})+\int_{t_{1}}^{t_{2}}\sum_{j=1}^{\infty} \left(h_{j+1}-2h_{j}+ h_{j-1}\right)\sum_{k=1}^{\infty}  K_{j,k} f_{j}(s) f_{k}(s)ds, 
		\end{align}
	\end{linenomath}
	 for any non-negative sequence $(h_{j})_{j\ge 0}$. Therefore, $M_{0}(f(t))=M_{0}(f(0))$ for any $t\in [0,T]$ can be proved by inserting and $t_{1}=0$, $t_{2}=t$ and $h_{j}:=1$, for all $j\in \mathbb{N}_{0}$ into \eqref{infinitediveform}. Similarly, $M_{1}(f(t))=M_{1}(f(0))$  can be proved by taking $h_{j}:=j$, for all $j\in \mathbb{N}_{0}$ in \eqref{infinitediveform}. Next, by applying $h_{j}:=j^{2}$ for all $j \in \mathbb{N}_{0}$ and using the non-negativity of $(f_{j})_{j\ge 0}$ and $K_{j,k}$, we obtain
	 \begin{linenomath}
	 	\begin{align}
	 			\sum_{j=1}^{\infty}j^{2} f_{j}(t_{2})-\sum_{j=1}^{\infty} j^{2} f_{j}(t_{1})= 2\int_{t_{1}}^{t_{2}}\sum_{j=1}^{\infty}f_{j}(s)\sum_{k=1}^{\infty}K_{j,k}f_{k}(s)ds\geq 0.\nonumber
	 	\end{align}
 	\end{linenomath}
 	It implies that
 	\begin{linenomath}
 		\begin{align}
 		\sum_{j=1}^{\infty}j^{2} f_{j}(t_{2}) \ge \sum_{j=1}^{\infty} j^{2} f_{j}(t_{1}).\nonumber
 	\end{align}
 \end{linenomath} 
Hence, $M_{2}(f(\cdot))$ is a non-decreasing function of time.
	\end{proof}
	In order to show the occurrence of instantaneous gelation, the upcoming lemma shows that all higher-order moments are finite.
	\begin{lemma}\label{finintemomentproperty}
	Let $K_{j,k}$ be a non-negative symmetric interaction kernel satisfying $K_{j,0}=0$ for all $j \in \mathbb{N}_{0}$, with $K_{j,k}\geq C\left(j^{\beta}+k^{\beta}\right)$ for some $C>0$ and $\beta>2$. Suppose  $ (f_{j}(t))_{j \ge 0}\in  Y_{2}^{+}$ is a mild solution to \eqref{infode}--\eqref{infIC} on $[0,T]$ for some $0<T< T_{gel}$, with the initial condition $ f(0) \in  Y_{2}^{+}$, $\sum_{k=0}^{\infty} f_{k}(0)\geq C_{2}>0$. Then, for any $p\in \mathbb{N}$, $M_{p}(f(t))<\infty$ for all $t\in [0,T)$. 
	\end{lemma}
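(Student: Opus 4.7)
The plan is to exploit (i) the mass-conservation and monotonicity bounds from \Cref{monotoneM2}, together with (ii) the divergence-form identity for $\sum_j h_j f_j$ with convex weights $h_j = j^p$, and to induct on $p$.

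Since $T < T_{gel}$, \Cref{monotoneM2} yields $M_{0}(f(t)) = M_{0}(f(0)) \geq C_{2}$, $M_{1}(f(t)) = M_{1}(f(0))$, and $M_{2}(f(\cdot))$ is non-decreasing on $[0,T]$, so in particular $\sup_{t \in [0,T]} M_{2}(f(t)) = M_{2}(f(T)) < \infty$. Testing the equation against $h_{j} = j^{2}$ in the infinite-dimensional analogue of \Cref{L-divergenceform} (valid since $K$ is symmetric and $K_{j,0} = 0$) produces the identity
\[
M_{2}(f(t)) - M_{2}(f(0)) \;=\; 2 \int_{0}^{t} \sum_{j,k \geq 1} K_{j,k} f_{j}(s) f_{k}(s)\, ds,
\]
so the time-integrated flux is bounded on $[0,T]$. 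Combined with the lower bound $K_{j,k} \geq C(j^{\beta} + k^{\beta})$ and $M_{0}(f(\cdot)) \geq C_{2}$, this already gives $\int_{0}^{T} M_{\beta}(f(s))\, ds < \infty$, and more generally the integrability of higher-moment-weighted fluxes once lower moments are under control.

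For the inductive step, assume $M_{q}(f(t)) < \infty$ on $[0, T)$ for all $q < p$. Testing against $h_{j} = j^{p}$ (convex) yields
\[
M_{p}(f(t)) \;=\; M_{p}(f(0)) + \int_{0}^{t} \sum_{j \geq 1} \bigl[(j+1)^{p} - 2 j^{p} + (j-1)^{p}\bigr] f_{j}(s) \sum_{k \geq 1} K_{j,k} f_{k}(s)\, ds,
\]
with a non-negative integrand; hence $M_{p}(f(\cdot))$ is non-decreasing, and it suffices to bound $M_{p}(f(T))$. Using the Taylor bound $(j+1)^{p} - 2 j^{p} + (j-1)^{p} \leq C_{p}(j+1)^{p-2}$, the task reduces to estimating $\int_{0}^{T} \sum_{j,k} j^{p-2} K_{j,k} f_{j} f_{k}\, ds$. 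I would handle this with a H\"older split that factors out the unweighted flux $\sum_{j,k} K_{j,k} f_{j} f_{k}$ (integrable in time by the displayed identity) against a weighted factor controlled by lower moments via the induction hypothesis. Executing this on the truncated system $(f_{j}^{N})$ from \Cref{truncatedlemma-1}, extracting uniform-in-$N$ bounds, and invoking Fatou's lemma upon $N \to \infty$ then yields $M_{p}(f(t)) < \infty$ on $[0, T)$.

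The main obstacle is the absence of any upper bound on $K_{j,k}$: only the lower bound $K_{j,k} \geq C(j^{\beta} + k^{\beta})$ is available. Consequently, the weight $\Phi_{j} := \sum_{k} K_{j,k} f_{k}$ admits no pointwise-in-$t$ polynomial control in terms of moments of $f$, so a direct Gronwall inequality for $\dot{M}_{p}$ is not feasible. The crux of the argument is therefore to substitute the missing pointwise upper bound on $\Phi_{j}$ by the time-integrated flux bound coming from $M_{2} \in L^{\infty}([0,T])$, and to close the induction using the monotonicity of convex moments together with the strict positivity $M_{0}(f(\cdot)) \geq C_{2}$.
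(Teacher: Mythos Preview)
Your inductive strategy does not close, and the obstacle is exactly the one you name in your last paragraph but do not overcome. With only the lower bound $K_{j,k}\ge C(j^{\beta}+k^{\beta})$ available, the quantity $\int_{0}^{T}\sum_{j,k} j^{p-2}K_{j,k}f_{j}f_{k}\,ds$ cannot be controlled by lower moments together with the unweighted flux: any H\"older split against the measure $K_{j,k}f_{j}f_{k}$ produces a factor carrying a \emph{higher} power of $j$ (or of $K$) than the one you started with. Concretely, for the admissible kernel $K_{j,k}=j^{\gamma}+k^{\gamma}$ with $\gamma$ arbitrarily large one has $\sum_{j,k} j^{p-2}K_{j,k}f_{j}f_{k}=M_{0}M_{p-2+\gamma}+M_{p-2}M_{\gamma}$, which involves a moment strictly above $M_{p}$; the induction therefore cannot step forward. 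Your appeal to the truncated system from \Cref{truncatedlemma-1} is also out of place: that lemma assumes the upper bound \eqref{quadraticcoagassum}, which is absent here, and the given mild solution is not known to arise as a limit of truncated ones. Finally, writing $M_{p}(f(t))=M_{p}(f(0))+\int_0^t(\cdots)$ presupposes $M_{p}(f(0))<\infty$, which is not part of the hypothesis (only $f(0)\in Y_{2}^{+}$ is assumed).

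The paper's argument takes the opposite route: it never bounds the growth of $M_{p}$ but instead extracts \emph{decay} from the lower bound on $K$. One works with the tail $g_{m}(\sigma):=\sum_{j\ge m} j^{2}f_{j}(\sigma)$, which is finite for every $m$ and every $\sigma\le T$ since $f\in Y_{2}^{+}$. Combining the identities of \Cref{L-tail} with $K_{j,k}\ge Cj^{\beta}$ and $M_{0}\ge C_{2}$ yields, for $0<\sigma<t\le T$, the backward inequality $g_{m}(\sigma)\le 2M_{2}(f(T))-2CC_{2}m^{\beta-2}\int_{\sigma}^{t}g_{m}(s)\,ds$, hence the exponential tail estimate $g_{m}(\sigma)\le 2M_{2}(f(T))\,e^{-2CC_{2}m^{\beta-2}(t-\sigma)}$. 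Since $m^{2}f_{m}(\sigma)\le g_{m}(\sigma)$, summing $m^{p-2}$ against this decay gives $M_{p}(f(\sigma))<\infty$ for every $\sigma\in(0,T)$. The mechanism is that super-quadratic exchange forces large clusters to shed mass so rapidly that the tail of $f$ cannot support any polynomial weight; this is a dissipation estimate driven by the \emph{lower} bound on $K$, not a propagation bound, and it is the missing idea in your proposal.
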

	\begin{proof}
		Let $(f_{j}(t))_{j\ge 0}\in  Y_{2}^{+}$ be a solution to \eqref{infode}--\eqref{infIC} on $[0,T]$. Then, $M_{2}(f(t))<\infty$ for $t<T$. By utilizing the first and third identities of Lemma \ref{L-tail}, we obtain
		\begin{linenomath}
		\begin{equation}
			\sum_{j=m}^{\infty}(j^{2}-m^{2}) f_{j}(t)-\sum_{j=m}^{\infty}(j^{2}-m^{2}%
			) f_{j}( \sigma)=\int_{ \sigma}^{t}\sum_{j=m}^{\infty}(2j+1)I_{j}( f(s))ds,\label{tail-df2}%
		\end{equation}
	\end{linenomath}
		for $0<\sigma<t \leq T$. By extracting $I_{j}( f(s))$ from \eqref{flow-equ} and moving it to the right-hand side of \eqref{tail-df2}, and then adjusting the index for the term $ f_{j+1}$, the expression becomes
		\begin{linenomath}
		\begin{align*}
			\int_{ \sigma}^{t}\sum_{j=m}^{\infty}(2j+1)I_{j}( f(s))ds &  =\int_{ \sigma}^{t}\sum
			_{j=m}^{\infty}(2j+1) f_{j}(s)\sum_{k=1}^{\infty}K_{k,j} f_{k}%
			(s)ds\label{flow-sh2}\\
			&  -\int_{ \sigma}^{t}\sum_{j=m+1}^{\infty}(2j-1) f_{j}(s)\sum_{k=0}^{\infty
			}K_{j,k} f_{k}(s)ds.
		\end{align*}
	\end{linenomath}
		Using the symmetry of the interaction kernel $K_{j,k}$, the conditions $K_{0,j}=0=K_{j,0}$ and non-negativity of $f_{j}$ and $K_{j,k}$, we obtain
		\begin{linenomath}
		\begin{align*}
			\int_{ \sigma}^{t}\sum_{j=m}^{\infty}(2j+1)I_{j}( f(s))ds & =
			2\int_{ \sigma}^{t}\sum_{j=m}^{\infty} f_{j}(s)\sum_{k=0}^{\infty}K_{k,j} f_{k}(s) + \int_{ \sigma}^{t}(2m-1) f_{m}(s)\sum_{k=1}^{\infty}K_{k,m}f_{k}(s)ds\\
			&\quad +
			\int_{ \sigma}^{t}\sum_{j=m}^{\infty}(2j-1) f_{j}(s)\sum_{k=1}^{\infty
			}(K_{k,j}-K_{j,k}) f_{k}(s)ds\\
			&\geq 	2\int_{ \sigma}^{t}\sum_{j=m}^{\infty} f_{j}(s)\sum_{k=0}^{\infty}K_{k,j} f_{k}(s).
		\end{align*}
	\end{linenomath}
		Then,  inserting  the above inequality into \eqref{tail-df2}, we observe
		\begin{linenomath}
		\begin{equation}
			\sum_{j=m}^{\infty}(j^{2}-m^{2}) f_{j}(t)-\sum_{j=m}^{\infty}(j^{2}-m^{2}%
			) f_{j}( \sigma)\geq2\int_{ \sigma}^{t}\sum_{j=m}^{\infty} f_{j}(s)\sum_{k=0}^{\infty
			}K_{k,j} f_{k}(s)ds.\label{mom-ters}%
		\end{equation}
	\end{linenomath}
		From \eqref{mom-ters} and \Cref{monotoneM2}, we can deduce the following by using the fact that $K_{j,k}>Cj^{\beta}$
		\begin{linenomath}
		\begin{align*}
			\sum_{j=m}^{\infty}j^{2} f_{j}(\sigma) &  \leq\sum_{j=m}^{\infty}j^{2} f_{j}(t)+\sum_{j=m}^{\infty}m^{2}
			f_{j}(\sigma)-2C\int_{ \sigma}^{t}\sum_{j=m}^{\infty}j^{\beta} f_{j}(s)\sum_{k=0}
			^{\infty} f_{k}(s)ds\\
			&  \leq 2M_{2}(f(T))+2CC_{2}m^{\beta-2}\int_{t}^{\sigma}
			\sum_{j=m}^{\infty}j^{2} f_{j}(s)ds.
		\end{align*}
	\end{linenomath}
		In the second line, we utilized $\sum_{k=0}^{\infty} f_{k}(s)=M_{0}(f(0))\geq C_{2}>0$ by \Cref{monotoneM2}. Solving the above integral inequality, we infer that
		\begin{linenomath}
		\begin{align*}
			\sum_{j=m}^{\infty}j^{2} f_{j}(\sigma)\leq 2M_{2}(f(T))e^{-2CC_{2}m^{\beta-2}(t-\sigma)}.
		\end{align*}
	\end{linenomath}
		Using $m^{2} f_{m}(t)\leq \sum_{j=m}^{\infty}j^{2} f_{j}(t)$ and the above inequality, we get
		\begin{linenomath}  
		\begin{align}\label{negative-exponentialseries}
		\sum_{j=m}^{\infty}j^{p} f_{j}(\sigma)\leq 2M_{2}(f(T))\sum_{j=m}^{\infty}j^{p-2}e^{-2CC_{2}j^{\beta-2}(t-\sigma)},
		\end{align}
	\end{linenomath}
		for $0<\sigma<t \leq T$.
		Since for $\beta>2$, we have
		\begin{linenomath} 
		\begin{align*}
			\lim_{j\rightarrow\infty}\frac{j^{\beta-2}}{\log j}=\infty. 
		\end{align*}
	\end{linenomath}
		Therefore, the series on the right-hand side of \eqref{negative-exponentialseries} is convergent, which implies that $M_{p}(f(t))<\infty$ for all $t\in [0,T)$ and for any $p\in \mathbb{N}$.
	\end{proof}
	At this juncture, we are ready to wrap up the proof of \Cref{Tgel}.
	\begin{proof}[\textbf{Proof of \Cref{Tgel}}]
		From the EDG system \eqref{infode}--\eqref{infIC}, we get
		\begin{linenomath}
		\begin{align}
			M_{n}(f(t))&=	M_{n}(f(0))+\int_{0}^{t}\sum_{j=1}^{\infty}((j+1)^{n}
			-2j^{n}+(j-1)^n) f_{j}(s)\sum_{k=1}^{\infty}K_{j,k} f_{k}(s)ds,\nonumber
		\end{align}
	\end{linenomath}
		for every natural number $n\ge 2$ and $t\in [0,T]$ with $0<T< T_{gel}$. All the terms in the above equation are finite by  Lemma \ref{finintemomentproperty}.
		Since
		\begin{linenomath}
		\begin{align*} 
			(j+1)^{n}-2j^{n}+(j-1)^n\geq n(n-1)j^{n-2}\quad \text{and}\quad K_{j,0}=0,\quad  \text{forall} \; j\in \mathbb{N}_{0},
		\end{align*}
	\end{linenomath}
		therefore, we obtain
		\begin{linenomath}
		\begin{align}
			M_{n}(f(t)) \geq M_{n}(f(0))+ n(n-1)\int_{0}^{t} \sum_{j=1}^{\infty} j^{n-2}f_{j}(s)\sum_{k=0}^{\infty}K_{j,k} f_{k}(s)ds. \nonumber
		\end{align}
	\end{linenomath}
	By using $K_{j,k}\ge C j^{\beta}$, \Cref{monotoneM2} and $M_{0}(f(0))\geq C_{2}>0$, the above inequality can be further estimated as
	\begin{linenomath} 
		\begin{align}
				M_{n}(f(t)) &\geq M_{n}(f(0))+ Cn(n-1)\int_{0}^{t} \sum_{j=1}^{\infty} j^{n-2+\beta}f_{j}(s)\sum_{k=0}^{\infty} f_{k}(s)ds. \nonumber\\
			&\geq M_{n}(f(0))+ CC_{2}n(n-1)\int_{0}^{t}M_{n-2+\beta}(f(s))ds.\label{Jensen-1}
		\end{align}
	\end{linenomath}
		An application of Jensen's inequality (see Appendix A) gives
		\begin{linenomath}
		\begin{align*}
			M_{n-2+\beta}(f(s)) \ge M_{1}^{-\Lambda}(f(0))\left(M_{n}(f(s))\right)^{1+\Lambda} \quad \text{for} \; s \in [0,t],
		\end{align*}
	\end{linenomath}
		with $\Lambda:=\frac{\beta -2}{n-1}$. Inserting the above inequality into \eqref{Jensen-1}, we obtain
		\begin{linenomath}
		\begin{align*}
			M_{n}(f(t)) \geq M_{n}(f(0))+ CC_{2}n(n-1)M_{1}^{-\Lambda}(f(0))\int_{0}^{t}\left(M_{n}(f(s))\right)^{1+\Lambda}ds.
		\end{align*}
	\end{linenomath}
		Then, by solving the above integral inequality, we get
		\begin{linenomath}
		\begin{align}
			M_{n}(f(t))&\geq \left[M_{n}^{-\Lambda}(f(0))-CC_{2}M_{1}^{-\Lambda}(f(0))(\beta-2)nt\right]^{\frac{-1}{\Lambda}}.\nonumber
		\end{align}
	\end{linenomath}
		The last inequality shows that $M_{n}(f(t))$  blows up at $t=\left( \frac{M_{n}(f(0))}{M_{1}(f(0))}\right)^{-\Lambda}\frac{1}{CC_{2}(\beta -2)n}$. 	Since $M_{n}(f(0))\geq M_{1}(f(0))$ for $n\ge 2$, the gelation time $T_{gel}\leq  \frac{C}{CC_{2}(\beta -2)n}$ for all $n\ge 2$. This implies that $T_{gel}=0$ by taking the limit as $n\rightarrow \infty$.
	\end{proof}
	An immediate consequence of the aforementioned results in \Cref{finintemomentproperty} and \Cref{Tgel} is \Cref{nonexistenceresult-2}.
	\begin{proof}[\textbf{Proof of \Cref{nonexistenceresult-2}}]
		For the sake of contradiction, suppose that $ (f_{j}(t))_{j\ge 0}\in  Y_{2}^{+}$ is a solution to \eqref{infode}--\eqref{infIC}
		on some interval $[0,T)$ for $T>0$. Clearly, \Cref{finintemomentproperty} implies that $M_{n}(f(t))<\infty$ on $[0,T)$ for any $n \in \mathbb{N}$. However, from \Cref{Tgel}, we have $T_{gel}=0$, which contradicts $M_{n}(f(t))<\infty$ on $[0,T)$ for any $n \in \mathbb{N}$. Therefore, 	there is no solution $ (f_{j}(t))_{j\ge 0}\in  Y_{2}^{+}$ of \eqref{infode}--\eqref{infIC}
		on any interval $[0,T)$. 
	\end{proof}	
\noindent	\textbf{Appendix A}.
	Let  $ q_{j} \geq 0  $ satisfy  $ \sum_{j=1}^{\infty} q_{j} = 1  $, and let  $ \Psi(x)  $ be a convex function. Then, by Jensen's inequality, we have
	\begin{linenomath}
	\begin{equation*}
	\sum_{j=1}^{\infty} q_{j} \Psi(x_{j}) \geq \Psi\left(\sum_{j=1}^{\infty} q_{j} x_{j}\right).
	\end{equation*}
\end{linenomath}
	Since  $ M_{1}(f(s)) = M_{1}(f(0))  $ for all  $ s \in [0, t] \subset [0, T)  $ by \Cref{monotoneM2}, we take  $ q_{j} = \frac{jf_{j}(s)}{M_{1}(f(0))}  $,  $ x_{j} = j^{n-1}  $, and  $ \Psi(x) = x^{1+\Lambda}  $ (where  $ \Lambda := \frac{\beta - 2}{n-1} > 0  $ for  $ \beta > 2  $) in Jensen's inequality. 
	It then follows that
	\begin{linenomath}
	\begin{equation*}
	M_{n-2+\beta}(f(s)) \geq M_{1}^{-\Lambda}(f(0)) \left(M_{n}(f(s))\right)^{1+\Lambda}, \quad \text{for} \; s \in [0, t].
	\end{equation*}
\end{linenomath}
	\noindent\textbf{Acknowledgements}. The research of SS was supported by Council of Scientific and Industrial Research, India, under the grant agreement No.09/143(0987)/2019-EMR-I.

	\bibliographystyle{abbrv}
	\bibliography{Exchnagedrivengrowth}
	
\end{document}